\numberwithin{equation}{section}
\theoremstyle{plain}
\newtheorem{lemma}{Lemma}[section]
\newtheorem{assumption}{Assumption}[section]
\newtheorem{theorem}{Theorem}[section]
\newtheorem{corollary}{Corollary}[section]
\newtheorem{definition}{Definition}[section]
\newcommand{\indic}{\textbf{1}}
\newcommand{\inv}{{-1}}
\newcommand{\tepsilon}{\tilde{\epsilon}}
\def\journal@name{}
\newcommand{\norm}[1]{\left\Vert#1\right\Vert}
\begin{document}                                                                                                                                                                     

\begin{frontmatter}
\title{Adaptive Bayesian Estimation of Mixed Discrete-Continuous Distributions under Smoothness and Sparsity
}
\runtitle{
Adaptive Bayesian Estimation of Mixed Distributions}
\thankstext{T1}{First version: December 2017,  current version: \today.}
\thankstext{T2}{We thank participants of Harvard-MIT econometrics workshop, OBayes 2017,  CIREQ 2018, and SBIES 2018 for helpful comments.}

\begin{aug}
\author{\fnms{Andriy} \snm{Norets}\thanksref{t1}
\ead[label=e1]{andriy\_norets@brown.edu}
}
\and
\author{\fnms{Justinas} \snm{Pelenis}\thanksref{t2}
\ead[label=e2]{pelenis@ihs.ac.at}
}

\thankstext{t1}{Associate Professor, Department of Economics, Brown University
}

\thankstext{t2}{Assistant Professor, Vienna Institute for Advanced Studies
}

\runauthor{A. Norets and J. Pelenis}

\affiliation{Brown University and Vienna Institute for Advanced Studies}

\address{Economics Department, \\
Brown University,
Providence, RI 02912
\\
\printead{e1}
}
\address{  Institute for Advanced Studies Vienna,
\\ Josefstaedter Strasse 39,
\\ Vienna 1080, Austria 
\printead{e2}
}

\end{aug}

\begin{abstract}

We consider nonparametric estimation of a mixed discrete-continuous distribution 
under anisotropic smoothness conditions and
possibly increasing number of support points for the discrete part of the distribution.
For these settings, we derive lower bounds on the estimation rates in the total variation distance.  Next, we consider a nonparametric mixture of normals model that uses continuous latent variables for the discrete part of the observations.  We show that the posterior in this model contracts at rates that are equal to the derived lower bounds up to a log factor.  
Thus, Bayesian mixture of normals models can be used for optimal adaptive estimation of mixed discrete-continuous distributions.

\end{abstract}


\begin{keyword}
\kwd{Bayesian nonparametrics, adaptive rates, minimax rates, posterior contraction, discrete-continuous distribution, mixed scale, mixtures of normal distributions, latent variables.}
\end{keyword}


\end{frontmatter}

\section{Introduction}

Mixture models have proven to be very useful for Bayesian nonparametric modeling of univariate and multivariate distributions of continuous variables. 
These models possess outstanding asymptotic frequentist properties:
in Bayesian nonparametric estimation of smooth densities the posterior in these models contracts at optimal adaptive rates up to a log factor 
(\cite{Rousseau:10}, \cite{KruijerRousseauVaart:09}, \cite*{ShenTokdarGhosal2013}).
Tractable Markov chain Monte Carlo (MCMC) algorithms for exploring posterior distributions of these models are available (\cite{EscobarWest:95}, \cite{MacEachernMuller98}, \cite{Neal:00}, \cite{MillerHarrison17}, \cite{Norets2017mcmc}) and they are widely used in empirical work (see \cite*{PractNonSemiparamBayes:98}, 
\cite{ChamberlainHirano99}, \cite*{BurdaHardingHausman:08}, \cite{ChibGreenberg10}, and \cite{JensenMaheu14} among many others).

In most applications, data contain both continuous and discrete variables.  
From the computational perspective, discrete variables can be easily accommodated through 
the use of continuous latent variables in Bayesian MCMC estimation (\cite{AlbertChib_binpoly:93}, \cite{McCullochRossi:94}).  In nonparametric modelling of discrete-continuous data by mixtures,
latent variables were used by \cite{CanaleDunson:11} and \cite{NoretsPelenis2012} among others.
Some results on frequentist asymptotic properties of the posterior distribution in such models 
have also been established. \cite{NoretsPelenis2012} obtained approximation results in Kullback-Leibler distance and weak posterior consistency for mixture models with a prior on the number of mixture components. \cite{DeYoreoKottas17} establish weak posterior consistency for Dirichlet process mixtures.
In similar settings, \cite{canale2015bayesian} derived posterior contraction rates that are not optimal.
The question we address in the present paper is whether 
a mixture of normal model that uses latent variables for modeling the discrete part of the distribution
can deliver (near) optimal and adaptive posterior contraction rates for nonparametric estimation of discrete-continuous distributions.  

Our contribution has two main parts. First, we derive lower bounds on the estimation rate for mixed multivariate discrete-continuous distributions under anisotropic smoothness conditions and potentially growing support of the discrete part of the distribution.  Second, we study the posterior contraction rate 
for a mixture of normals model with a variable number of components that uses continuous latent variables for the discrete part of the observations.  We show that the posterior in this model contracts at rates that are equal to the derived lower bounds up to a log factor.  
Thus, Bayesian mixture models can be used for (up to a log factor) optimal adaptive estimation of mixed discrete-continuous distributions.  These results are obtained in a rich asymptotic framework where the multivariate discrete part of the data generating distribution can have either a large or a small number of support points and it can be either very smooth or not, and these characteristics can differ from one discrete coordinate to another.  In these settings, 
smoothing is  beneficial only for a subset of discrete variables with
a quickly growing number of support points and/or high level of smoothness. 
In a sense, this subset is automatically and correctly selected by the mixture model.
The obtained optimal posterior contraction rates are 
 adaptive since the priors we consider do not depend on the number of support points and the smoothness of the data generating process.

Our results on lower bounds have independent value outside of the literature on Bayesian mixture models and their frequentist properties.  Let us briefly review most relevant results on lower bounds and place our results in that context.
The minimax estimation rates for mixed discrete continuous distributions appear to be studied first by
\cite{Efromovich2011}.  He considers discrete variables with a fixed support and shows that the optimal rates for discrete continuous distributions are equal to the optimal nonparametric rates for 
the continuous part of the distribution.  Relaxing the assumption of the fixed support 
for the discrete part of the distribution is very desirable in nonparametric settings.
It has been commonly observed at least since \cite{AitchisonAitken76} that smoothing discrete 
data in nonparametric estimation improves results in practice.
\cite{HallTitterington87} introduced an asymptotic framework that provided a precise theoretical 
justification for improvements resulting from smoothing in the context of estimating a univariate discrete distribution with a support that can grow with the sample size.
In their setup, the support is an ordered set and the probability mass function is $\beta$-smooth (in a sense that analogs of $\beta$-order Taylor expansions hold).  They show that in their setup the minimax rate 
is the smaller one of the following two: (i) the optimal estimation rate for a continuous density with the smoothness level $\beta$, $n^{-\beta/(2\beta+1)}$, and (ii) the rate of convergence of the standard frequency estimator, $(N/n)^{1/2}$, where $N$ is the cardinality of the support and $n$ is the sample size. \cite{HallTitterington87} refer to their setup as ``Sparse Multinomial Data'' since $N$ can be larger than $n$ and this is the reason we refer to sparsity in the tile of the present paper.
\cite{Burman87} established similar results for $\beta=2$.
Subsequent literature in multivariate settings (e.g., \cite{DongSimonoff1995}, 
\cite{AertsAugustynsJanssen97statistics}) did not consider lower bounds but demonstrated that
when the support of the discrete distribution grows sufficiently fast then estimators that employ smoothing can achieve the standard nonparametric rates for $\beta$-smooth densities on $\mathbb{R}^d$, 
$n^{-\beta/(2\beta+d)}$. 

We generalize the results of \cite{HallTitterington87} on lower bounds for univariate discrete distributions to multivariate mixed discrete-continuous case and anisotropic smoothness.  
Alternatively, our results can be viewed as a generalization of results in \cite{Efromovich2011}
to settings with potentially growing supports for discrete variables.

Some details of our settings and assumptions differ from those in \cite{HallTitterington87}
and \cite{Efromovich2011} because our original motivation was in understanding the behavior of the posterior in mixture models with latent variables.
Specifically, we consider lower and upper estimation bounds in the total variation distance 
since posterior concentration in nonparametric settings is much better understood when 
the total variation distance is considered (\cite{GhosalGhoshVaart:2000}).
We also introduce a new definition of anisotropic smoothness that, on the one hand, 
accommodates an extension of techniques for deriving lower bounds from 
\cite{IbragimovHasminskii1984} and, on the other hand, lets us exploit approximation results 
for  mixtures of multivariate normal distributions developed by \cite{ShenTokdarGhosal2013}.


The rest of the paper is organized as follows.  In Section \ref{sec:notation}, we describe our framework and define notation. Section \ref{sec:main_results} presents our results on lower bounds for estimation rates.
The results on the posterior contraction rates are given in Section \ref{sec:post_rates}.
Appendix contains auxiliary results and some proofs.

\section{Preliminaries and Notation}
\label{sec:notation}

Let us denote the continuous part of observations by $x \in \mathcal{X} \subset \mathbb{R}^{d_x}$ and the discrete part by 
$y=(y_1,\ldots,y_{d_y}) \in \mathcal{Y}$, where
\begin{align*}
\mathcal{Y}=\prod_{j=1}^{d_y} \mathcal{Y}_j, \text{ with } 
\mathcal{Y}_j=\left \{ \frac{1-1/2}{N_j},\frac{2-1/2}{N_j}, \ldots, \frac{N_j-1/2}{N_j} \right\} ,
\end{align*}
 is a grid on $[0,1]^{d_y}$ (a product symbol $\Pi$ applied to sets hereafter denotes a Cartesian product).
The number of values that the discrete coordinates $y_j$ can take, $N_j$, can potentially grow with the sample size or stay constant.

For $y=(y_1,\ldots,y_{d_y}) \in \mathcal{Y}$, let $A_{y} = \prod_{j=1}^{d_y} A_{y_j}$, where 
\begin{equation*}
A_{y_j} = \begin{cases}
(-\infty, y_j+0.5/N_j] & \text{if } y_j=0.5/N_j\\
(y_j-0.5/N_j, \infty) &\text{if } y_j=1-0.5/N_j\\
(y_j-0.5/N_j, y_j+0.5/N_j] &\text{otherwise}
\end{cases}
\end{equation*}
and let us represent the data generating density-probability mass function as 
\begin{equation}
\label{eq:p0f0repr}
p_0(y,x)= \int_{A_y} f_0(\tilde{y}, x) d\tilde{y},
\end{equation}
where $f_0$ belongs to $\mathcal{D}$, the set of probability density functions
on $\mathbb{R}^{d_x+d_y}$ with respect to the Lebesgue measure.  The representation of a mixed discrete-continuous distribution in \eqref{eq:p0f0repr} is so far without a loss of generality
since for any given $p_0$ one could always define $f_0$ using a mixture of densities with non-overlapping supports included in $A_y$, $y \in \mathcal{Y}$. 

In this paper, we consider independently identically distributed observations from $p_0$:
$(Y^n,X^n)=(Y_1,X_1,\ldots,Y_n,X_n)$. Let $P_0$, $E_0$, $P_0^n$, and $E_0^n$ denote the probability measures and expectations corresponding to $p_0$ and its product $p_0^n$.

When $N_j$'s grow with the sample size the generality of the representation in \eqref{eq:p0f0repr} can be lost when assumptions such as smoothness are imposed on $f_0$.
Nevertheless, in what follows we do impose a smoothness assumption on $f_0$.  The interpretation of this assumption is that 
the values of discrete variables can be ordered and that borrowing of information from nearby discrete points can be useful in estimation.

To get more refined results, we 
allow $N_j$'s to grow at different rates for different $j$'s.  
For the same reason, we work with anisotropic smoothness. 
Let $\mathbb{Z}_+$ denote the set of non-negative integers. For smoothness coefficients $\beta_i>0$, $i=1,\ldots,d$, $d=d_x+d_y$, 
and an envelope function $L:\mathbb{R}^{2d}\rightarrow \mathbb{R}$,
an anisotropic $(\beta_1,\ldots,\beta_d)$-Holder class, $\mathcal{C}^{\beta_1,\ldots,\beta_d,L}$, is defined as follows.

\begin{definition}
\label{def:anis_smooth}
$f \in \mathcal{C}^{\beta_1,\ldots,\beta_d,L}$ if for any $k=(k_1,\ldots,k_d) \in \mathbb{Z}_+^d$, $\sum_{i=1}^d k_i/\beta_i<1$,
mixed partial derivative of order $k$, $D^k f$, is finite and
\begin{equation}
\label{eq:smooth_def}
|D^k f (z+\Delta z)-D^k f (z) | \leq L(z, \Delta z) \sum_{j=1}^d |\Delta z_j|^{\beta_j(1-\sum_{i=1}^d k_i/\beta_i)},
\end{equation}
where $\Delta z_j=0$ when $\sum_{i=1}^d k_i/\beta_i + 1/\beta_j < 1 $.
\end{definition}

In this definition, a Holder condition is imposed on $D^k f$ for a coordinate $j$ when $D^k f$ cannot be differentiated with respect to $z_j$ anymore  
($\sum_{i=1}^d k_i/\beta_i<1$ but $\sum_{i=1}^d k_i/\beta_i + 1/\beta_j \geq 1 $).
This definition slightly differs from definitions available in the literature on anisotropic smoothness that we found.   
Section 13.2 in \cite{Schumaker2007splines} presents some general anisotropic smoothness definitions but restricts attention to integer smoothness coefficients.
\cite{IbragimovHasminskii1984}, and most of the literature on minimax rates under anisotropic smoothness that followed including 
\cite{BarronBirgeMassart99} and \cite{BhattacharyaPatiDunson2014}, do not restrict mixed derivatives. 
\cite{ShenTokdarGhosal2013} use $|\Delta z_j|^{\min(\beta_j - k_j,1)}$ instead of $|\Delta z_j|^{\beta_j(1-\sum k_i/\beta_i)}$ in \eqref{eq:smooth_def}. Their requirement is stronger than ours for functions with bounded support, and 
it appears too strong for our derivation of lower bounds on the estimation rate.
However, our definition is sufficiently strong to obtain a Taylor expansion with remainder terms that have the same order as those in \cite{ShenTokdarGhosal2013} 
(while the definitions that do not restrict mixed derivatives do not deliver such an expansion).

When $\beta_j=\beta$, $\forall j$ and $\sum_{i=1}^d k_i/\beta + 1/\beta \geq 1 $,
$\beta_j(1-\sum k_i/\beta_i) = \beta - \lfloor \beta \rfloor$, 
where $\lfloor \beta \rfloor$ is the largest integer that is strictly smaller than $\beta$, and we
get the standard definition of $\beta$-Holder smoothness for the isotropic case.

The envelope $L$ can be assumed to be a function of $(z,\Delta z)$ to accommodate densities with unbounded support.
We derive lower bounds on estimation rates for a constant envelope function.  
Upper bounds on posterior contraction rates are derived under more general assumptions on $L$ as in \cite{ShenTokdarGhosal2013}.

Some extra notation: for a multi-index $k=(k_1,\ldots,k_d) \in \mathbb{Z}_+^d$, $k!=\prod_{i=1}^d k_i!$, and
for $z \in \mathbb{R}^d$, $z^k=\prod_{i=1}^d z_i^{k_i}$.
The $m$-dimensional simplex is denoted by $\Delta^{m-1}$.  $I_d$ stands for the $d \times d$ identity matrix.
Let $\phi_{\mu,\sigma}(\cdot)$ and $\phi(\cdot; \mu,\sigma)$ denote a multivariate normal density with mean $\mu \in \mathbb{R}^d$ and covariance 
matrix $\sigma^2 I_d$ (or a diagonal matrix with squared elements of $\sigma$ on the diagonal
when $\sigma$ is a $d$-vector).  For $z \in \mathbb{R}^d$ and $J \subset \{1,2,\ldots,d\}$, $z_J$ denotes sub-vector $\{z_i,\, i \in J\}$.
Operator ``$\lesssim$'' denotes less or equal up to a multiplicative positive constant relation.

\section{Lower Bounds on Estimation Rates}
\label{sec:main_results}

Let $\mathcal{A}$ denote a collection of all subsets of indices for discrete coordinates  $\{1, \ldots,d_y\}$. 
For $J \in \mathcal{A}$, define $J^c=\{1,\ldots,d\}\setminus J$, 
\[
N_{J}=\prod_{i\in J} N_i, \;\;\;\;\;\;\; 
\beta_{J^c} = \left [   \sum_{i \in J^c} \beta_i^{-1}\right ]^{-1},
\]
 $N_{\emptyset}=1$, $\beta_{\emptyset} = \infty$, 
and $\beta_{\emptyset}/(2\beta_{\emptyset} + 1 )=1/2$.

For a class of probability distributions $\mathcal{P}$, $\zeta$ is said to be a lower bound on the estimation error in metric $\rho$ if
\[
\inf_{\hat{p}} \sup_{p \in \mathcal{P}} P\left(\rho(\hat{p},p)\geq \zeta \right) \geq const > 0. 
\]

We consider the following class of probability distributions: for a positive constant $L$, let 
\begin{equation}
\label{eq:classP}
\mathcal{P}=\left \{p: \: p(y,x)=\int_{A_y} f(\tilde{y}, x) d\tilde{y}, \, f \in \mathcal{C}^{\beta_1,\ldots,\beta_d,L} \cap \mathcal{D} \right\}.
\end{equation}

\begin{theorem} 
\label{th:LowerBounds}
For $\mathcal{P}$ defined in \eqref{eq:classP},
\begin{equation}
\label{eq:l_b}
\Gamma_n = \min_{J \in \mathcal{A}} \left[ \frac{N_J}{n} \right ]^{\frac{\beta_{J^c}}{2\beta_{J^c}+1}}
=\left[ \frac{N_{J_\ast}}{n} \right ]^{\frac{\beta_{J^c_\ast}}{2\beta_{J^c_\ast}+1}}
\end{equation}
multiplied by a positive constant is a lower bound on estimation error in the total variation distance.
\end{theorem}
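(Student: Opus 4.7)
The plan is to apply Assouad's lemma to a hypercube family of densities built around a minimizer $J_\ast$ of the map $J \mapsto [N_J/n]^{\beta_{J^c}/(2\beta_{J^c}+1)}$, extending the standard Ibragimov--Hasminskii strategy to the anisotropic, mixed discrete-continuous setting. Since $\Gamma_n$ is a minimum, it suffices to produce a single construction achieving rate $\Gamma_n = (N_{J_\ast}/n)^{\beta_{J^c_\ast}/(2\beta_{J^c_\ast}+1)}$ for this $J_\ast$. Set $h = \Gamma_n$ and, for each smoothable coordinate $i \in J^c_\ast$, take the anisotropic bandwidth $h_i = h^{1/\beta_i}$, so that $h_i^{\beta_i} = h$ uniformly in $i$. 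Partition $[0,1]^d$ into $M \asymp N_{J_\ast} \prod_{i \in J^c_\ast} h_i^{-1} = N_{J_\ast}\, h^{-1/\beta_{J^c_\ast}}$ tiles of width $1/N_i$ in directions $i \in J_\ast$ and width $h_i$ in directions $i \in J^c_\ast$. Fix a smooth strictly positive baseline $f_0$ on $[0,1]^d$, and for each tile $\alpha$ a translated-rescaled smooth bump $\psi_\alpha$ supported in that tile, taken as a product of unit coordinate bumps with one factor (in some fixed $i_0 \in J^c_\ast$) made mean zero so that $\int \psi_\alpha = 0$. For $\sigma \in \{0,1\}^M$ set
\[
f_\sigma = f_0 + c\,h \sum_\alpha (2\sigma_\alpha - 1)\psi_\alpha,
\]
with $c$ small enough to keep $f_\sigma$ non-negative, and let $p_\sigma$ be the mixed distribution induced by \eqref{eq:p0f0repr}.

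The anisotropic scaling $h_i^{\beta_i} = h$ paired with amplitude $\epsilon_n = c h$ is exactly what places $f_\sigma$ in $\mathcal{C}^{\beta_1,\ldots,\beta_d,L} \cap \mathcal{D}$ under Definition \ref{def:anis_smooth} applied bump-by-bump: the worst-case mixed-derivative remainder carries a common factor $\epsilon_n/h = c$, so a small $c$ secures the envelope $L$. For a Hamming-one neighbour pair $(\sigma, \sigma')$, the perturbation $\delta p(y,x) = \epsilon_n \int_{A_y}\psi_\alpha(\tilde y,x)\, d\tilde y$ has
\[
\|\delta p\|_1 \asymp \epsilon_n \cdot N_{J_\ast}^{-1} \prod_{i \in J^c_\ast} h_i = \epsilon_n \cdot N_{J_\ast}^{-1}\, h^{1/\beta_{J^c_\ast}},
\]
and, using $p_0 \gtrsim \prod_{i \le d_y} N_i^{-1}$ on its support,
\[
KL(p_\sigma, p_{\sigma'}) \lesssim \chi^2(p_\sigma, p_{\sigma'}) \asymp \epsilon_n^2 \cdot N_{J_\ast}^{-1}\, h^{1/\beta_{J^c_\ast}}.
\]
Multiplying by $n$ and plugging in $\epsilon_n = c h$ with $h = \Gamma_n$ makes $n \cdot KL$ a small constant, so Assouad's lemma (with total variation as the semi-distance) delivers a minimax lower bound of order $M \cdot \|\delta p\|_1 \asymp \epsilon_n = c h = c\,\Gamma_n$, matching the statement.

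The main obstacles, in order of difficulty, are: (i) verifying that the rescaled anisotropic bumps satisfy Definition \ref{def:anis_smooth}---the mixed-derivative H\"older remainder has to be checked coordinate by coordinate against the bandwidth scaling $h_i = h^{1/\beta_i}$, and it is precisely this definition (rather than one that leaves mixed derivatives unconstrained) that permits the clean bookkeeping above; (ii) guaranteeing that mean-zero bumps on the $f$-level actually produce nonzero perturbations after the integration $\int_{A_y}$, which requires $h_i \ge 1/N_i$ for every $i \in J^c_\ast \cap \{1,\ldots,d_y\}$---this is not an extra hypothesis but a consequence of the optimality of $J_\ast$, since if $h^{1/\beta_i} < 1/N_i$ for some such $i$, then a short computation using the identity $1/\beta_{J^c_\ast} = 1/\beta_{(J_\ast\cup\{i\})^c} + 1/\beta_i$ shows that moving $i$ from $J^c_\ast$ into $J_\ast$ strictly decreases the objective, contradicting the minimality of $J_\ast$; and (iii) the bookkeeping of the $N_{J_\ast}^{-1}$ factors generated by the integration $\int_{A_y}$ when transferring $f$-level magnitudes to the $p$-level in both $\|\delta p\|_1$ and $\chi^2(p_\sigma, p_{\sigma'})$. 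Step (ii) is the crux: it is exactly what ties the discrete support sizes to the anisotropic smoothness exponents inside the single expression $\Gamma_n$.
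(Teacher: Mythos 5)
Your proposal is correct in outline and takes a genuinely different route from the paper: you invoke Assouad's lemma over the full hypercube $\{0,1\}^M$, while the paper uses Tsybakov's multiple-hypothesis bound (Theorem 2.5 in his book, stated as Lemma~\ref{lm:AbstrLBTsyb}) together with the Varshamov--Gilbert pruning (Lemma~\ref{lm:VarshamovGilbert}) to extract a well-separated subfamily. The hypothesis construction is essentially the same scaled-bump-on-tiles idea, and your key observation in step (ii) --- that $\Gamma_n^{1/\beta_i}N_i \geq 1$ for every $i\in J_\ast^c$ is forced by the minimality of $J_\ast$ --- is exactly what the paper proves in Lemma~\ref{lm:b_bast_h_N}(ii). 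The anisotropic bandwidth scaling $h_i = h^{1/\beta_i}$, amplitude $\epsilon_n = ch$, and the final arithmetic ($M\cdot\|\delta p\|_1 \asymp \Gamma_n$ and $n\,\chi^2 \asymp c^2$) all check out, and your choice of a single mean-zero factor (instead of the paper's coordinate-wise antisymmetric kernel $g$) is a valid alternative for normalization.

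What you gloss over, and what the paper spends real effort on, is the \emph{alignment} of bump tiles with the grid cells $A_y$. For Assouad's lemma you need the total variation loss to be additive over coordinates of $\sigma$, which requires the supports of the projected bumps $\int_{A_\cdot}\psi_\alpha$ to be pairwise disjoint in $(y,x)$; otherwise adjacent tiles leak into a common $A_y$ and the separation bound degrades by an uncontrolled constant. The paper enforces this by rounding the discrete bandwidths to $h_i = (2/N_i)R_i$ with integer $R_i = \lfloor\Gamma_n^{1/\beta_i}N_i/2\rfloor + 1$ (so tile boundaries land exactly on grid-cell boundaries) and verifying $\varrho_i = h_i/\Gamma_n^{1/\beta_i} \in (1,2]$ so the rounding only costs a factor of two. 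Also, when $i_0$ is discrete (unavoidable if $d_x=0$), you need $h_{i_0}\geq 2/N_{i_0}$ (not merely $\geq 1/N_{i_0}$) plus alignment, or the integration $\int_{A_{y_{i_0}}}$ can annihilate the mean-zero factor; $h_i\geq 1/N_i$ alone is not sufficient. Finally, you don't address the degenerate case where no coordinate can carry the mean-zero factor, i.e., $d_x = 0$ and $J_\ast^c = \emptyset$; the paper dispatches this separately as a finite multinomial $n^{-1/2}$ problem via the $\bar{m}<8$ discussion. These are repairable gaps, but they are precisely the technical content the paper's construction is designed to handle cleanly, and a complete proof would need to make them explicit.
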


One could recognize expression $\left[ N_J/n \right ]^{\frac{\beta_{J^c}}{2\beta_{J^c}+1}}$  in \eqref{eq:l_b}
as the standard estimation rate for a $card(J^c)$-dimensional  density with anisotropic smoothness coefficients $\{\beta_j, \, j \in J^c\}$ and the sample size $n/N_J$ (\cite{IbragimovHasminskii1984}).
One way to interpret this is 
that the density of $\{x,\tilde{y}_j,\, j \in J^c\}$ conditional on $y_J$ is $\{\beta_j, \, j \in J^c\}$-smooth and the number of observations available for its estimation (observations with the same value of $y_J$)
should be of the order $n/N_J$; also, the estimation rate for the marginal probability mass function for $y_J$ is $[N_J/n]^{1/2}$, which is at least as fast as $\left[ N_J/n \right ]^{\frac{\beta_{J^c}}{2\beta_{J^c}+1}}$.
In this interpretation, smoothing is not performed over the discrete coordinates with indices in set $J$, and the lower bound is obtained when $J$ minimizes $\left[ N_J/n \right ]^{\frac{\beta_{J^c}}{2\beta_{J^c}+1}}$.
Thus, an estimator that delivers the rate in \eqref{eq:l_b} should, in a sense, optimally choose the subset of discrete variables over which to perform smoothing.

We set up the notation and an outline of the proof of Theorem \ref{th:LowerBounds} below and delegate detailed calculations to lemmas in Appendix \ref{sec:proofs_extras}.
The proof of the theorem is based on a general theorem 
from the literature on lower bounds, which we present next in a slightly simplified form.

\begin{lemma}
\label{lm:AbstrLBTsyb}
(Theorem 2.5 in \cite{Tsybakov:08}, see also \cite{IbragimovHasminskii1977})
$\zeta$ is a lower bound on the estimation error in metric $\rho$ for a class $\mathcal{Q}$ if
there exist a positive integer $M \geq 2$ and $q_j,q_i \in \mathcal{Q}$, $0\leq j < i \leq M$ such that
$\rho(q_j,q_i) \geq 2 \zeta$, $q_j << q_0$, $j=1,\ldots,M$ and 
\begin{equation}
\label{eq:avgKLcond}
\sum_{j=1}^M KL(Q_{j}^n,Q_{0}^n)/M < \log(M)/8,
\end{equation}
where $KL$ is the Kullback-Leibler divergence and $Q_{j}^n$ is the distribution of a random sample from $q_j$.
\end{lemma}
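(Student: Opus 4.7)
The plan is to apply Lemma~\ref{lm:AbstrLBTsyb} with $\zeta$ equal to a constant multiple of $\Gamma_n = r_{J_\ast}$, where I abbreviate $r_J := (N_J/n)^{\beta_{J^c}/(2\beta_{J^c}+1)}$. Concretely, I will construct a finite sub-family $\{f_\omega\} \subset \mathcal{C}^{\beta_1,\ldots,\beta_d,L} \cap \mathcal{D}$ whose induced mixed distributions $p_\omega$ have pairwise total variation separation of order $r_{J_\ast}$ while the $n$-sample average KL divergence to a base $p_0$ satisfies \eqref{eq:avgKLcond}. The construction perturbs a smooth positive base density $f_0$ (bounded below on the region of interest) by anisotropic product bumps tailored to the minimizing index $J_\ast$.

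First, I would fix smooth compactly supported bump functions $\eta$ and $\xi$ on $(-1/2, 1/2)$, with $\int \eta > 0$ and $\int \xi = 0$. Set $\alpha := r_{J_\ast}$ and $h_i := \alpha^{1/\beta_i}$ for $i \in J^c_\ast$, and index bump centers by $l = (y_J^{(l)}, z^{(l)})$ with $y_J^{(l)}$ running over $\mathcal{Y}_{J_\ast}$ (giving $N_{J_\ast}$ discrete cells) and $z^{(l)}$ on a grid of step $h_i$ in the $J^c_\ast$-directions (giving $\prod_{i \in J^c_\ast} h_i^{-1} = \alpha^{-1/\beta_{J^c_\ast}}$ centers per cell), for a total of $K \asymp N_{J_\ast}\alpha^{-1/\beta_{J^c_\ast}}$ bumps. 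Define
\[\psi_l(\tilde y, x) = \prod_{j \in J_\ast}\eta\bigl(N_j(\tilde y_j - y_j^{(l)})\bigr)\prod_{i \in J^c_\ast}\xi\bigl((z_i - z_i^{(l)})/h_i\bigr)\]
and $f_\omega = f_0 + L\alpha \sum_l \omega_l \psi_l$ for $\omega \in \{-1, +1\}^K$ subject to a parity constraint ensuring $\int f_\omega = 1$. Applying the Varshamov--Gilbert lemma I would extract $M \gtrsim 2^{K/8}$ codewords at pairwise Hamming distance at least $K/8$.

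Next, I would verify the Tsybakov conditions. Because the $\eta$-factor of $\psi_l$ is supported inside the single cell $A_{y_J^{(l)}}$, $\int_{A_y}\psi_l$ vanishes unless $y_J = y_J^{(l)}$, so contributions from distinct bumps do not interfere upon integration; combined with the disjoint $J^c_\ast$-supports this yields $\|p_\omega - p_{\omega'}\|_{TV}$ of order $L\alpha$ times the fraction of disagreeing coordinates, which is bounded below by $L\alpha \asymp r_{J_\ast}$ up to a constant. The standard $\chi^2$ bound gives $\mathrm{KL}(P_\omega^n, P_0^n) \lesssim n \int (f_\omega - f_0)^2/f_0 \lesssim n L^2 \alpha^2$. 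The choice $\alpha = (N_{J_\ast}/n)^{\beta_{J^c_\ast}/(2\beta_{J^c_\ast}+1)}$ is precisely the one balancing this against $\log M \asymp K = N_{J_\ast}\alpha^{-1/\beta_{J^c_\ast}}$, so \eqref{eq:avgKLcond} holds and Lemma~\ref{lm:AbstrLBTsyb} yields the claim.

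The main obstacle is verifying $f_\omega \in \mathcal{C}^{\beta_1,\ldots,\beta_d,L}$ under Definition~\ref{def:anis_smooth}. The product form of $\psi_l$ decouples the H\"older estimates coordinate-wise: along $i \in J^c_\ast$ the semi-norm of $\psi_l$ is of order $h_i^{-\beta_i} = \alpha^{-1}$, so amplitude $L\alpha$ yields a seminorm of order $L$; along $j \in J_\ast$ the semi-norm is of order $N_j^{\beta_j}$, imposing the feasibility $\alpha \leq N_j^{-\beta_j}$. This feasibility at $\alpha = r_{J_\ast}$ reduces, after a short manipulation of exponents, to the inequality $r_{J_\ast} \leq r_{J_\ast \setminus \{j\}}$ which holds by the definition of $J_\ast$; a parallel computation using $r_{J_\ast} \leq r_{J_\ast \cup \{k\}}$ for $k \in J^c_\ast \cap \{1,\ldots,d_y\}$ yields $h_k \geq 1/N_k$, ensuring that in those directions the $\xi$-bumps span multiple discrete cells and contribute non-trivially after integration. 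The mixed-derivative clauses of Definition~\ref{def:anis_smooth} then follow from the product form of $\psi_l$, reducing to one-dimensional H\"older estimates on $\eta$ and $\xi$.
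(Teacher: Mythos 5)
Your proposal does not prove the quoted statement. Lemma~\ref{lm:AbstrLBTsyb} is Theorem~2.5 of \cite{Tsybakov:08}, stated in the paper only as a citation with no proof; it is the general reduction-to-testing tool. What you have written instead is a (sketched) proof of Theorem~\ref{th:LowerBounds}, i.e.\ an \emph{application} of Lemma~\ref{lm:AbstrLBTsyb}. Since your argument begins ``apply Lemma~\ref{lm:AbstrLBTsyb}'', it cannot serve as a proof of that lemma itself.

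Taking your text as a proof of Theorem~\ref{th:LowerBounds}, the strategy is the same as the paper's (Tsybakov reduction, Varshamov--Gilbert, product bumps scaled by $\Gamma_n$, $\chi^2$-type KL bound, smoothness feasibility via the minimality of $J_\ast$), and your reduction of the feasibility constraint $\alpha\leq N_j^{-\beta_j}$ to $\Gamma_n$ being the minimum over $J$ matches Lemma~\ref{lm:b_bast_h_N}. But there is one genuine gap that the paper addresses with care and your sketch does not: the lower bound on $\|p_\omega-p_{\omega'}\|_{TV}$ for the \emph{discrete} coordinates in $J_\ast^c$. You take a generic mean-zero $\xi$ with bandwidth $h_i=\alpha^{1/\beta_i}$ and claim that after integrating over a cell $A_{y_i}$ of width $1/N_i$ the bump still contributes on the order of $1/N_i$. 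That is false in general: with $\int\xi=0$ and $h_i>1/N_i$, a cell straddling a sign change of $\xi$ can have $\int_{A_{y_i}}\xi((z_i-z_i^{(l)})/h_i)\,dz_i$ arbitrarily close to zero, so the TV separation can degrade below the claimed order. The paper avoids this by (i) building $g$ as a difference of two shifted $K_0$-bumps so that it is sign-definite on each half of its support, and (ii) rounding the bandwidth to $h_i=(2/N_i)R_i$ with $R_i\in\mathbb N$ and $\varrho_i\in(1,2]$ (Lemma~\ref{lm:b_bast_h_N}(ii)) so that every cell $A_{y_i}$ is contained in a region where $g_r$ does not change sign; this alignment is exactly what makes the change of variables in the TV computation (Lemma~\ref{lm:q_tvd}) go through. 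Without such an alignment, your TV lower bound is unproved. A smaller point: your parity constraint on $\omega\in\{-1,+1\}^K$ is unnecessary whenever $J_\ast^c\neq\emptyset$ (the $\xi$-factor already forces $\int\psi_l=0$), and in the degenerate case $J_\ast^c=\emptyset$ the paper bypasses the testing argument entirely (standard multinomial rate); if you do impose a parity constraint you must also recheck that Varshamov--Gilbert still applies to the constrained hypercube.
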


The following standard result on bounding the number of unequal elements in binary sequences is used in our construction of $q_j$, $j=1,\ldots,M$.

\begin{lemma}
\label{lm:VarshamovGilbert}
(Varshamov-Gilbert bound, Lemma 2.9 in \cite{Tsybakov:08})
Consider the set of all binary sequences of length $\bar{m}$,
$\Omega =\left\{w=(w_1, \ldots, w_{\bar{m}}):\; w_r \in \{0, 1\} \right \} = \{0, 1\}^{\bar{m}}.$
Suppose $\bar{m} \geq 8$. Then there exists
a subset $\{w^1, \ldots, w^M\}$ of $\Omega$ such that $w^0 = (0,\ldots, 0)$,
\[
\sum_{r=1}^{\bar{m}} 1\{w^j_r\neq w^i_r\} \geq \bar{m}/8, \; \forall 0 \leq j < i \leq M, 
\]
and
\[M \geq 2^{\bar{m}/8}.
\]
\end{lemma}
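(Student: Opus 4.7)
The plan is to establish existence via a probabilistic (random coding) argument. I would draw $W^1,\ldots,W^M$ i.i.d.\ uniformly from $\{0,1\}^{\bar{m}}$, keep $w^0=(0,\ldots,0)$ fixed, and show that for a suitable $M$, with strictly positive probability every pair in $\{w^0, W^1,\ldots,W^M\}$ has Hamming distance at least $\bar{m}/8$. Any realization achieving this event then yields the required deterministic code $w^1,\ldots,w^M$.

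First I would identify the relevant distributions. Writing $W^0 := w^0$, for every pair $0 \leq j < i \leq M$ the coordinate indicators $\indic\{W^i_r \neq W^j_r\}$ are Bernoulli$(1/2)$ and independent across $r$, so
\[
\rho(W^i,W^j) \;=\; \sum_{r=1}^{\bar{m}} \indic\{W^i_r \neq W^j_r\} \;\sim\; \text{Binomial}(\bar{m}, 1/2),
\]
with mean $\bar{m}/2$. This holds in both the fixed-vs-uniform case ($j=0$) and the uniform-vs-uniform case ($j\geq 1$), so the problem reduces to a standard lower-tail bound. Applying Hoeffding's inequality with deviation $3\bar{m}/8$ gives, for each pair,
\[
P\bigl(\rho(W^i,W^j) < \bar{m}/8\bigr) \;\leq\; \exp(-9\bar{m}/32),
\]
and a union bound over the $\binom{M+1}{2}$ pairs yields
\[
P\bigl(\exists\, j<i:\; \rho(W^i,W^j) < \bar{m}/8\bigr) \;\leq\; \binom{M+1}{2} \exp(-9\bar{m}/32).
\]

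I would then set $M := \lceil 2^{\bar{m}/8}\rceil$ and bound the right-hand side by a constant times $\exp\bigl(\bar{m}\,(\log 2/4 - 9/32)\bigr)$. Since $\log 2/4 \approx 0.173 < 0.281 \approx 9/32$, the exponent is negative, so the bound decays in $\bar{m}$; checking the boundary $\bar{m}=8$ (where $M=2$ and $\binom{M+1}{2}=3$) confirms the product is already strictly less than $1$ there. The complementary event therefore has positive probability, producing a realization with all pairwise Hamming distances at least $\bar{m}/8$. Since $M\geq 2^{\bar{m}/8}$ by construction, this is the desired code.

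The only delicate point is the numerical constants check: the hypothesis $\bar{m}\geq 8$ is tailored precisely so that $M=\lceil 2^{\bar{m}/8}\rceil \geq 2$ (a nontrivial code is required) and so that the Hoeffding exponent $9/32$ dominates the union-bound exponent $(\log 2)/4$ uniformly throughout the range. Everything else in the argument is a routine application of Hoeffding and the probabilistic method.
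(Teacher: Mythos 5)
Your argument is correct. Since the Varshamov--Gilbert bound is stated in the paper as a cited result (Lemma~2.9 of Tsybakov, 2008) with no proof reproduced, there is no in-paper proof to compare against; your probabilistic-method route (i.i.d.\ uniform codewords, pairwise Hamming distance is Binomial$(\bar m,1/2)$, Hoeffding lower-tail bound $\exp(-9\bar m/32)$, union bound over $\binom{M+1}{2}$ pairs, numerical check of the exponents and of the base case $\bar m=8$) is the standard proof and is essentially the argument given in the cited reference. One small point worth making explicit if you were to write this up: in the good event all $W^1,\ldots,W^M,w^0$ are automatically distinct, since two coinciding codewords would have Hamming distance $0<\bar m/8$ and hence fall in the bad event you have excluded.
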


To define $q_j$'s for our problem, we need some additional notation.
Let 
\[
K_0(u) = \exp\{-1/(1-u^2)\} \cdot 1\{|u|\leq 1\}.
\]
This function has bounded derivatives of all orders and it 
smoothly decreases to zero at the boundary of its support.
This type of kernel functions is usually used for constructing hypotheses for lower bounds, see Section 2.5 in \cite{Tsybakov:08}.
Since we  need to construct a smooth density 
that integrates to 1, we define 
(as illustrated in Figure \ref{fig:fun_g}) 
\[
g(u)=c_0 [K_0(4(u+1/4))-K_0(4(u-1/4))],
\]
where $c_0>0$ is a sufficiently small constant that will be specified below.
\\[\intextsep]
\begin{minipage}{\linewidth}
\centering%
\includegraphics[width=100mm]{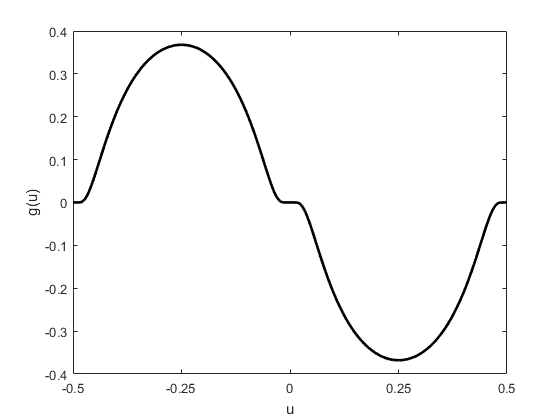}%
\figcaption{Function $g$ for $c_0=1$.}%
\label{fig:fun_g}
\end{minipage}
\\[\intextsep]
Function $g$ will be used as a kernel in construction of $q_k$'s.  Let us define the bandwidth for these kernels first.

For the continuous  coordinates, we define the bandwidth as in \cite{IbragimovHasminskii1984},
\[
h_i=\Gamma_n^{1/\beta_i},\, i \in \{d_y+1,\ldots,d\}. 
\]
For the discrete ones, over which smoothing is beneficial, 
we define the bandwidth as
\[
h_i=\varrho_i\cdot\Gamma_n^{1/\beta_i}=\frac{2}{N_i}\cdot R_i,\, i \in J_\ast^c \cap \{1,\ldots,d_y\},
\]
where 
$R_i=\lfloor \Gamma_n^{1/\beta_i} N_i / 2\rfloor +1$ is a positive integer and 
$\varrho_i\in(1,2]$ as shown in  
Lemma \ref{lm:b_bast_h_N}.

For the rest of the discrete coordinates, our innovation is to first define artificial anisotropic smoothness coefficients $\beta_i^\ast=-\log(\Gamma_n)/\log{N_i}$, $i \in J_\ast$, at which the rate in 
\eqref{eq:l_b} would have the same value whether we smooth over $y_i$ ($i \in J_\ast^c$) or not ($i \in J_\ast$).
Then, we define the bandwidth as
\[h_i=2\cdot \Gamma_n^{1/\beta_i^\ast}=2/N_i, \ i \in J_\ast.\]
To streamline the notation, we also define $\beta_i^\ast=\beta_i$ for $i \in J_\ast^c$.

Let $m_i$ be the integer part of $h_i^{-1}$, $i=1,\ldots,d$.  
Let us consider $\bar{m}=\prod_{i=1}^d m_i$ adjacent rectangles in $[0,1]^d$, $B_r$, $r=1,\ldots,\bar{m}$, with the side lengths $(h_1,\ldots,h_d)$ and 
centers $c^r=(c^r_1,\ldots,c^r_d)$,  
 $c_i^r=h_i(k_{ir} - 1/2)$, $k_{ir} \in\{1,\ldots,m_i\}$.
For $z \in \mathbb{R}^d$ and $r=1,\ldots,\bar{m}$, define 
\[
g_r (z)= \Gamma_n \prod_{i=1}^d g((z_i-c^r_i)/h_i),
\]
which can be non-zero only on $B_r$.
A set of hypotheses is defined by sequences of binary weights on $g_r$'s as follows
\begin{equation}
\label{eq:q_j}
q_j(y,x) = \int_{A_y} \left [1_{[0,1]^d}(\tilde{y},x) + \sum_{r=1}^{\bar{m}} w^j_r g_r(\tilde{y},x) \right ] d\tilde{y},
\end{equation}
where $w^j_r \in \{0,1\}$, $j=0,\ldots,M$, and $M$ are defined in Lemma \ref{lm:VarshamovGilbert}.

The rest of the proof is delegated to lemmas in Appendix \ref{sec:proofs_extras},
which show that $q_k$ in \eqref{eq:q_j} satisfy the sufficient conditions 
from Lemma \ref{lm:AbstrLBTsyb}. 
Specifically, Lemma \ref{lm:q_tvd} derives the lower bound on the total variation distance.
Lemma \ref{lm:q_kl} verifies condition \eqref{eq:avgKLcond} 
when $\bar{m}\geq 8$.
Lemma  \ref{lm:q_smooth}, part (i) of Lemma \ref{lm:b_bast_h_N}, and the fact that $q_k$'s are defined on $[0,1]^d$ imply $q_j \in \mathcal{C}^{\beta_1,\ldots,\beta_d,L}$, $j=0,\ldots,M$.

This argument (Lemma \ref{lm:q_kl} specifically) requires $\bar{m}\geq 8$ as it relies on Lemma \ref{lm:VarshamovGilbert}.
Observe that as $n\rightarrow \infty$, $\bar{m}\geq 8$ if there are continuous variables or there are discrete variables over which smoothing is beneficial ($J_\ast^c \neq \emptyset$). 
Thus, $\bar{m}<8$ can happen only if there are no continuous variables and $N_{J_\ast}=N_1 \cdots N_d$ is bounded. This is just a problem of estimating a multinomial distribution with finite support 
and the standard results for parametric problems deliver the usual $n^{-1/2}$ rate.

Finally, note that we prove the lower bound results for a class of densities that includes densities that are in $\mathcal{C}^{\beta_1,\ldots,\beta_d,L}$ on $[0,1]^d$.
It is straightforward to modify the proof so that it works for a class of 
smooth densities on $\mathbb{R}^d$.  To accomplish this we can replace $ 1_{[0,1]^d}(\cdot) $
in \eqref{eq:q_j} with a smooth function on $\mathbb{R}^d$ that has a bounded support and is bounded away from zero on $[0,1]^d$, for example,  
\[
\prod_{i=1}^d \left[ 1_{[0,1]}(z_i) + IK_0(z_i+1)*1(z_i<0) + IK_0(2-z_i)*1(z_i>1) \right],
\]
multiplied by a normalization constant, 
where $IK_0(z_i)=\int_{-1}^{z_i} K_0(u)du / \int_{-1}^1 K_0(u)du$.
Then, proofs of Lemmas  \ref{lm:q_tvd}-\ref{lm:q_smooth} go through with minor modifications.

\pagebreak

\section{Posterior Contraction Rates for a Mixture of Normals Model}
\label{sec:post_rates}

\subsection{Model and Prior}
\label{sec:model_prior}

In this section, we consider a Bayesian model for the data generating process in \eqref{eq:p0f0repr}.  We use a mixture of normal distributions with a variable number of components for modelling the joint distribution of $(\tilde{y},x)$,
\begin{align}
\label{eq:joint_mix_def}
f(\tilde{y},x| \theta, m) &= \sum_{j=1}^m \alpha_j \phi(\tilde{y},x;\mu_j,\sigma)  \notag \\
p(y,x| \theta, m) &= \int_{A_y}  f(\tilde{y},x| \theta, m) d\tilde{y},
\end{align}
where $\theta=(\mu_j^y, \mu_j^x, \alpha_j, j=1,2,\ldots,m; \sigma)$.

We assume the following conditions on the prior $\Pi$ for $(\theta,m)$.  For positive constants $a_1, a_2,  \ldots, a_{9}$, for each $i\in \{1,\ldots,d\}$ the prior for $\sigma_i$ satisfies 
\begin{eqnarray}  
	\Pi( \sigma^{-2}_i \geq s)  &\leq& a_1 \exp \{-a_2 s^{a_3} \} \quad \text{for all sufficiently large} \, \,  s > 0 
	\label{eq:asnPrior_sigma1}\\
	\Pi( \sigma^{-2}_i < s)  &\leq& a_4 s ^{a_5}  \quad \text{for all sufficiently small}  \, \,  s > 0 \label{eq:asnPrior_sigma2}\\
	\Pi\{ s < \sigma^{-2}_i < s(1+t) \} &\geq& a_6 s^{a_7} t^{a_8} \exp \{-a_9 s^{1/2}\}, \quad s > 0, \quad t \in (0,1). 
	\label{eq:asnPrior_sigma3}
\end{eqnarray}
An example of a prior that satisfies \eqref{eq:asnPrior_sigma1}-\eqref{eq:asnPrior_sigma3} is the inverse Gamma prior 
for $\sigma_i$. 

Prior for 
$(\alpha_1,\ldots,\alpha_m)$ conditional on $m$ is Dirichlet$(a/m,\ldots,a/m)$, $a > 0$.
 Prior for the number of mixture components $m$ is
\begin{equation}
	\label{eq:asnPrior_m}
	\Pi(m=i) \propto \exp(-a_{10} i (\log i)^{\tau_1}), i =2, 3, \ldots, \quad a_{10}>0, \tau_1 \geq 0.
\end{equation}
More generally, a prior that can be bounded above and below by functions in the form of the right hand side of \eqref{eq:asnPrior_m}, possibly with different constants, would also work.

A priori, the components of $\mu_{j}$, $\mu_{j,i}$, $i=1,\dots,d$ are independent from each other, other parameters, and across $j$.
Prior density for $\mu_{j,i}$ is bounded below for some $a_{12}, \tau_2 > 0$ by 
\begin{equation}
	\label{eq:asnPrior_mu_lb}
	a_{11}\exp(-a_{12} |\mu_{j,i}|^{\tau_2} ),
\end{equation}
	and for some $a_{13}, \tau_3>0$ and all sufficiently large $\mu > 0$,
	\begin{equation}
	\label{eq:asnPrior_mu_tail_ub}
	\Pi(\mu_{j,i} \notin [-\mu,\mu]) \leq \exp(-a_{13} \mu^{\tau_3}). 
\end{equation}

\subsection{Assumptions on the Data Generating Process}
\label{sec:assumpt_dgp}

In what follows, we consider a fixed subset of discrete indices $J \in \mathcal{A}$ and show that under regularity conditions,
the posterior contraction rate is bounded above by $\left[ \frac{N_J}{n} \right ]^{\frac{\beta_{J^c}}{2\beta_{J^c}+1}}$ times a log factor.
If the regularity conditions we describe below for a fixed $J$ hold for every subset of $\mathcal{A}$, then the posterior contraction rate matches the lower bound in
\eqref{eq:l_b} up to a log factor.

Without a loss of generality, let $J=\{1,\ldots,d_J\}$, $I=\{d_{J}+1,\ldots,d_y\}$, $J^c=\{1,\ldots,d\}\setminus J$, and $d_{J^c}=card(J^c)$.
Similarly to $\mathcal{Y}$ and $A_{y}$ defined in Section \ref{sec:notation}, we define 
$\mathcal{Y}_J=\prod_{j\in J} \mathcal{Y}_j$ and $A_{y_J}= \prod_{i \in J} A_{y_i}$.
Also, let  $y_{J}=\{y_i\}_{i \in J}$, $\tilde{y}_{I}=\{\tilde{y}_i\}_{i \in I}$, $\tilde{x}=(\tilde{y}_I,x) \in \tilde{\mathcal{X}}=\mathbb{R}^{d_{J^c}}$.

To formulate the assumptions on the data generating process, we need additional notation,
\begin{align*}
f_{0J}(y_J,\tilde{x})&= \int_{A_{y_J}} f_0(\tilde{y}_J, \tilde{x}) d\tilde{y}_J,\\
\pi_{0J}(y_J)&=\int_{\tilde{\mathcal{X}}} f_{0J}(y_J,\tilde{x}) d\tilde{x}, \\
f_{0|J}(\tilde{x}|y_J)&=\frac{ f_{0J}(y_J,\tilde{x})}{\pi_{0J}(y_J)},\\
p_{0|J}(y_I,x|y_J)&= \int_{A_{y_I}} f_{0|J}(\tilde{y}_I, x|y_J) d\tilde{y}_I.
\end{align*}
Also, let $F_{0|J}$ and $E_{0|J}$ denote the conditional probability and expectation corresponding to $f_{0|J}$.
If $\pi_{0J}(y_J)=0$ for a particular $y_J$, then we can define the conditional density $f_{0|J}(\tilde{x}|y_J)$ arbitrarily.
We make the  following assumptions on the data generating process.

\begin{assumption}
\label{as:f0_subexp_tail}
There are positive finite constants $b,\bar{f}_0,\tau$ such that for any $y_J \in\mathcal{Y}_J$ and $\tilde{x} \in \tilde{\mathcal{X}}$
\begin{align}
\label{eq:tail_cond2}
f_{0|J}(\tilde{x}|y_J ) \leq \bar{f}_0 \exp \left( -b ||\tilde{x}||^{\tau}\right).
\end{align}
\end{assumption}

It appears that all the papers on (near) optimal posterior contraction rates for mixtures of normal densities impose similar tail conditions on data generating densities.

\begin{assumption}
\label{as:f0_Ay_tail_unif_x}
There exists a positive and finite $\bar{y}$ such that for any
$(y_I,y_J) \in \mathcal{Y}$ and $x \in \mathcal{X}$ 
\begin{equation}
\label{eq:ass_y_I_tail_indep_x}
\int_{A_{y_I} \cap \{||\tilde{y}_I|| \leq \bar{y} \}}
f_{0|J}(\tilde{y}_I, x |y_J) d\tilde{y}_I
\geq
\int_{A_{y_I} \cap \{||\tilde{y}_I|| > \bar{y} \}}
f_{0|J}(\tilde{y}_I, x |y_J) d\tilde{y}_I.
\end{equation}
\end{assumption}

This assumption always holds for $A_{y_I} \subset [0,1]^{d_{J^c}-d_x}$.  When $A_{y_I}$ is a rectangle with at least one infinite side,
an interpretation of this assumption is that the tail probabilities for $\tilde{y}_I$ conditional on $(x,y_J)$ decline uniformly in $(x,y_J)$.
Bounded support for $\tilde{y}_I$ is a sufficient condition for this assumption.


\begin{assumption}
\label{as:f0_in_CbL}
We assume that 
\begin{equation}
\label{eq:foInC}
f_{0|J} \in \mathcal{C}^{\beta_{d_J+1},\ldots,\beta_{d},L},
\end{equation}
where for some $\tau_0\geq 0$ and any $(\tilde{x}, \Delta \tilde{x}) \in \mathbb{R}^{2d_{J^c}}$
\begin{equation}
\label{eq:L_defb}
L(\tilde{x},\Delta \tilde{x}) = \tilde{L}(\tilde{x}) \exp\left\{ \tau_0 ||\Delta \tilde{x}||^2 \right\},
\end{equation}
\begin{equation}
\label{eq:tildeL_ineq}
\tilde{L}(\tilde{x}+\Delta \tilde{x}) \leq \tilde{L}(\tilde{x}) \exp\left\{ \tau_0 ||\Delta \tilde{x}||^2 \right\}.
\end{equation}
\end{assumption}

The smoothness assumption \eqref{eq:foInC} on the conditional density $f_{0|J}$ is implied by the smoothness of the joint density $f_0$ at least under boundedness away from zero assumption, 
see Lemma \ref{lm:finCb_fcondJinC}.

\begin{assumption}
\label{as:Ef0_tildeL}
There are positive finite constants $\varepsilon$ and $\bar{F}$, such that 
for any $y_J \in\mathcal{Y}_J$ and $ k =\{k_i\}_{i \in J^c} \in \mathbb{N}_0^{d_{J^c}}$, $\sum_{i \in J^c} k_i/\beta_i < 1$, 
\begin{align}
\label{eq:asnE0Dff0}
 & \int \left[\frac{|D^k f_{0|J}(\tilde{x}|y_J)|}{f_{0|J}(\tilde{x}|y_J)} \right]^{\frac{(2+\varepsilon\beta_{J^c}^{-1} d_{J^c}^{-1})}{\sum_{i \in J^c} k_i/\beta_i}}  f_{0|J}(\tilde{x}|y_J) d \tilde{x} <\bar{F}, \\ 
\label{eq:asnE0Lf0}
 & \int \left[\frac{\tilde{L}(\tilde{x})}{f_{0|J}(\tilde{x}|y_J)}\right]^{2+\varepsilon\beta_{J^c}^{-1} d_{J^c}^{-1}} f_{0|J}(\tilde{x}|y_J) d \tilde{x} < \bar{F}.
\end{align}
\end{assumption}
The envelope function and restrictions on its behaviour are mostly relevant for the case of unbounded support.  Condition \eqref{eq:asnE0Lf0} suggests that the envelope function $\tilde{L}$ should be comparable to $f_{0|J}$.

\begin{assumption}
\label{as:Nj_o_n_1nu}
For some small $\nu>0$,
\begin{equation}
\label{eq:Npoly_n}
N_J=o(n^{1-\nu}).
\end{equation}
\end{assumption}
We impose this assumption 
to exclude from consideration
the cases with 
very slow (non-polynomial) rates
as some parts of the proof require $\log (1/\epsilon_n)$ to be of order $\log n$.

\subsection{Posterior Contraction Rates}
\label{sec:contr_rate}

Let 
\begin{equation}
\label{eq:t0def}
t_{J0} = \begin{cases}
\frac{d_{J^c}[1 + 1/(\beta_{J^c}d_{J^c}) + 1/\tau] + \max\{\tau_1,1,\tau_2/\tau\}} {2 + 1/\beta_{J^c}} & \text{if } J^c\neq\emptyset\\
\max\{\tau_1,1\} / 2 &\text{if }J^c = \emptyset
\end{cases}
\end{equation}
where  $(\tau,\tau_1, \tau_2)$ are defined in Sections \ref{sec:model_prior}-\ref{sec:assumpt_dgp}.

\begin{theorem} \label{th:post_contr}
Suppose the assumptions from Sections \ref{sec:model_prior}-\ref{sec:assumpt_dgp} hold for a given $J \in \mathcal{A}$.
Let
\begin{equation}
\label{eq:def_eps}
\epsilon_n =  
\left[\frac{N_J}{n}\right]^{\beta_{J^c}/(2\beta_{J^c} + 1 )} (\log n)^{t_J},
\end{equation}
where  $t_J > t_{J0} + \max \{0, (1- \tau_1)/2\}$.
Suppose also $n \epsilon_n^2 \rightarrow \infty$.
Then, there exists $\bar{M}>0$ such that 
\[
\Pi\left( p: d_{TV}(p,p_0) > \bar{M} \epsilon_n | Y^n,X^n\right) \stackrel{P_0^n}{\rightarrow} 0.
\]
\end{theorem}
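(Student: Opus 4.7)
The plan is to apply the general posterior contraction machinery of Ghosal, Ghosh, and van der Vaart (2000), adapted along the lines of Shen, Tokdar, and Ghosal (2013) to handle anisotropic smoothness, and modified to accommodate the mixed discrete-continuous structure. The three ingredients to verify are: (i) a lower bound $\Pi(B_n(p_0,\epsilon_n))\gtrsim \exp(-c\, n\epsilon_n^2)$ on the prior mass of a KL-type neighborhood of $p_0$; (ii) a sieve $\mathcal{F}_n$ with prior complement mass $\Pi(\mathcal{F}_n^c)\leq \exp(-(c+4)n\epsilon_n^2)$; and (iii) a $d_{TV}$-covering bound $\log N(\epsilon_n,\mathcal{F}_n,d_{TV})\lesssim n\epsilon_n^2$. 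Because the condition of Assumption~\ref{as:Nj_o_n_1nu} forces $\log(1/\epsilon_n)\asymp\log n$, the usual calibration between sieve size, entropy, and prior concentration will work at the claimed rate once the target mixture has been constructed.

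The key construction (Step~1) is an approximating mixture $f^\ast$ that respects the $J$ versus $J^c$ split. For each $y_J\in\mathcal{Y}_J$, Assumption~\ref{as:f0_in_CbL} places $f_{0|J}(\cdot\mid y_J)$ in an anisotropic Hölder class on $\mathbb{R}^{d_{J^c}}$, so Shen--Tokdar--Ghosal's finite mixture approximation applies with bandwidths $\sigma_i=\epsilon_n^{1/\beta_i}$ for $i\in J^c$ and $O(\epsilon_n^{-d_{J^c}/\beta_{J^c}}(\log n)^{d_{J^c}/\beta_{J^c}})$ components per cell, giving approximation error $O(\epsilon_n)$ on $f_{0|J}$ under the moment conditions \eqref{eq:asnE0Dff0}--\eqref{eq:asnE0Lf0}. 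On the non-smoothed coordinates $i\in J$, choose $\sigma_i$ comparable to $\Gamma_n^{1/\beta_i^\ast}=1/N_i$ (with $\beta_i^\ast=-\log\Gamma_n/\log N_i$, the same device used in the lower-bound proof), and centre components at the grid points $y_J$ so each mixture block supports essentially one cell of $A_{y_J}$; Assumption~\ref{as:f0_Ay_tail_unif_x} controls the truncation error in $\tilde y_I$. Stacking these blocks over all $y_J$ yields a target mixture with $m_0\asymp N_J\,\epsilon_n^{-d_{J^c}/\beta_{J^c}}(\log n)^{d_{J^c}/\beta_{J^c}}$ components and total $d_{TV}$-error $O(\epsilon_n)$, and one verifies the stronger KL and $V$-type conditions using the sub-exponential tail Assumption~\ref{as:f0_subexp_tail} exactly as in Lemma~\ref{lm:finCb_fcondJinC}-style reductions and the standard Wong--Shen truncation argument.

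For prior concentration (Step~2), I evaluate $\Pi$ on a neighborhood of the representative $(\theta^\ast,m^\ast)$. The Dirichlet$(a/m_0,\ldots,a/m_0)$ prior contributes $\exp(-C m_0\log m_0)$, the Gaussian-type location prior \eqref{eq:asnPrior_mu_lb} contributes $\exp(-C m_0 H_n^{\tau_2})$ with $H_n\lesssim(\log n)^{1/\tau}$ (from the tail of $f_{0|J}$), the prior on each $\sigma_i$ contributes a polynomial factor via \eqref{eq:asnPrior_sigma3}, and the prior on $m$ gives $\exp(-a_{10} m_0(\log m_0)^{\tau_1})$. Collecting these, $n\epsilon_n^2$ must dominate $m_0\max\{\log m_0,(\log m_0)^{\tau_1},H_n^{\tau_2}\}$; solving for the log exponent $t_J$ reproduces exactly the formula \eqref{eq:t0def} plus the $(1-\tau_1)_+/2$ slack from the prior on $m$. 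For the sieve (Steps~3--4), set $\mathcal{F}_n=\{m\leq M_n,\ \mu_j\in[-H_n,H_n]^d,\ \sigma_i\in[\underline\sigma_n,\overline\sigma_n]\}$ with $M_n\asymp n\epsilon_n^2/\log n$; the tail bounds \eqref{eq:asnPrior_sigma1}--\eqref{eq:asnPrior_sigma2}, \eqref{eq:asnPrior_mu_tail_ub}, and \eqref{eq:asnPrior_m} give the required prior-mass bound on $\mathcal{F}_n^c$, and a standard gridding of the simplex and of $[-H_n,H_n]^d\times[\underline\sigma_n,\overline\sigma_n]^d$ gives $\log N(\epsilon_n,\mathcal{F}_n,d_{TV})\lesssim M_n\log n\lesssim n\epsilon_n^2$. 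Theorem~2.1 of Ghosal--Ghosh--van der Vaart then delivers the conclusion.

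The main obstacle is Step~1: engineering a single mixture of normals that simultaneously (a) localizes mass into the correct $A_{y_J}$ cells on the $J$-coordinates using small $\sigma_J$, (b) performs Shen--Tokdar--Ghosal-style Taylor smoothing on the $J^c$-coordinates using the bandwidth $\sigma_{J^c}=\epsilon_n^{1/\beta_{J^c}}$, and (c) keeps $m_0$ of order $N_J$ times the correct nonparametric count so that prior concentration yields $\exp(-n\epsilon_n^2)$ with the precise log exponent $t_{J0}$. The anisotropic Hölder Definition~\ref{def:anis_smooth} was designed to make the requisite Taylor remainder estimates work, so the smoothing analysis is essentially Shen--Tokdar--Ghosal's; the novelty is that the $N_J$-factor in the component count propagates linearly into the prior-concentration log factor, producing the critical exponent~\eqref{eq:t0def} and ultimately matching the lower bound of Theorem~\ref{th:LowerBounds} up to a log factor.
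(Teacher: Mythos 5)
Your proposal follows essentially the same route as the paper: reduce to the Ghosal--van der Vaart sufficient conditions, build the approximating mixture with $N_J K$ components by combining Shen--Tokdar--Ghosal kernel smoothing on the $J^c$ coordinates with grid-centred small-bandwidth normals on the $J$ coordinates, and then verify the sieve and entropy conditions by standard gridding. Two small calibration points worth flagging: the $J$-coordinate bandwidth must be taken a log factor \emph{smaller} than $1/N_i$ (the paper sets $\sigma_i^{\star 2}=1/[64\,N_i^2\,\beta\,\log(1/\sigma_n)]$) so that the Gaussian mass escaping $A_{y_J}$ is $O(\sigma_n^{2\beta})$ rather than merely $o(1)$; and the $\max\{0,(1-\tau_1)/2\}$ slack in $t_J$ comes from the sieve-complement condition \eqref{eq:sievecomplement} via the separation of $\epsilon_n$ from $\tilde\epsilon_n$ in Lemma~\ref{lm:sieve_n}, not directly from the prior-thickness calculation, which only forces $\tilde t_J>t_{J0}$.
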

As in Section \ref{sec:main_results}, when $J^c = \emptyset$, $\beta_{J^c}$ can be defined to be infinity and $\beta_{J^c}/(2\beta_{J^c} + 1 )=1/2$ in \eqref{eq:def_eps}.

\begin{corollary} \label{cl:post_contr_min}
Suppose the assumptions from Sections \ref{sec:model_prior}-\ref{sec:assumpt_dgp} hold for every $J \in \mathcal{A}$.
Let
\begin{equation}
\label{eq:eps_w_min}
\epsilon_n =  \min_{J \in \mathcal{A}}
\left[\frac{N_J}{n}\right]^{\beta_{J^c}/(2\beta_{J^c} + 1 )} (\log n)^{t_{J}},
\end{equation}
where  $t_J > t_{J0} + \max \{0, (1- \tau_1)/2\}$.
Suppose also $n \epsilon_n^2 \rightarrow \infty$.
Then, there exists $\bar{M}>0$ such that 
\[
\Pi\left( p: d_{TV}(p,p_0) > \bar{M} \epsilon_n | Y^n,X^n\right) \stackrel{P_0^n}{\rightarrow} 0.
\]
\end{corollary}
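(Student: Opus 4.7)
The plan is to deduce the corollary from Theorem~\ref{th:post_contr} by a simple union/partition argument that exploits the finiteness of $\mathcal{A}$. For each $J \in \mathcal{A}$ write $\epsilon_n(J) = [N_J/n]^{\beta_{J^c}/(2\beta_{J^c}+1)}(\log n)^{t_J}$, so that $\epsilon_n = \min_{J \in \mathcal{A}} \epsilon_n(J)$. Since the assumptions of Sections~\ref{sec:model_prior}--\ref{sec:assumpt_dgp} are assumed to hold for every $J \in \mathcal{A}$, Theorem~\ref{th:post_contr} applies to each index set separately and produces, for each $J$, a constant $\bar{M}_J > 0$ such that
\begin{equation*}
\Pi\bigl(p : d_{TV}(p,p_0) > \bar{M}_J \epsilon_n(J) \,\big|\, Y^n, X^n\bigr) \stackrel{P_0^n}{\rightarrow} 0.
\end{equation*}
Set $\bar{M} = \max_{J \in \mathcal{A}} \bar{M}_J$, which is finite because $\mathcal{A}$ has cardinality $2^{d_y}$; then the display above continues to hold with $\bar{M}_J$ replaced by $\bar{M}$.

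Next I would handle the fact that the minimizer of $\epsilon_n(J)$ may depend on $n$. For each $n$, pick a deterministic $J_n^\ast \in \arg\min_{J \in \mathcal{A}} \epsilon_n(J)$, so that $\epsilon_n = \epsilon_n(J_n^\ast)$. Because $\mathcal{A}$ is finite, the sequence $\{J_n^\ast\}$ takes at most $2^{d_y}$ distinct values, and the integers partition as $\mathbb{N} = \bigsqcup_{J \in \mathcal{A}} \{n : J_n^\ast = J\}$. For any fixed $\delta > 0$ and any $J$, the event $\{J_n^\ast = J\}$ is deterministic, so
\begin{equation*}
P_0^n\bigl(\Pi(d_{TV}(p,p_0) > \bar{M}\epsilon_n \mid Y^n, X^n) > \delta\bigr)
= \sum_{J \in \mathcal{A}} \mathbf{1}\{J_n^\ast = J\}\, P_0^n\bigl(\Pi(d_{TV}(p,p_0) > \bar{M}\epsilon_n(J) \mid Y^n, X^n) > \delta\bigr).
\end{equation*}
By the theorem applied to each $J$, every summand tends to zero as $n \to \infty$, and since the number of summands is bounded by $2^{d_y}$, the total tends to zero as well. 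This is exactly the claimed conclusion.

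There is no real obstacle here beyond the bookkeeping above; all analytic content is absorbed into Theorem~\ref{th:post_contr}. The only point worth double-checking is that the condition $n\epsilon_n^2 \to \infty$ in the corollary implies $n \epsilon_n(J)^2 \to \infty$ for every $J$, so the hypothesis of Theorem~\ref{th:post_contr} is indeed met for each $J$: this is immediate since $\epsilon_n(J) \geq \epsilon_n$ for all $J \in \mathcal{A}$. One should also verify that the edge case $J^c = \emptyset$ with $\beta_{J^c}/(2\beta_{J^c}+1) = 1/2$ is consistent with the convention stated just after Theorem~\ref{th:post_contr}, but this is only a notational matter.
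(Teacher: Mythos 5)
Your proposal is correct and follows essentially the same route as the paper: apply Theorem~\ref{th:post_contr} to the $J$ attaining the minimum in \eqref{eq:eps_w_min}. The paper's own proof is a one-sentence remark; you usefully spell out the two points it leaves implicit, namely that $\bar{M}$ can be taken uniform over the finite set $\mathcal{A}$ and that the $n$-dependence of the minimizer $J_n^\ast$ is handled by the finiteness of $\mathcal{A}$ (so $a_n^{J_n^\ast}\le\max_{J\in\mathcal{A}}a_n^J\to0$), but the substance is identical.
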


Under the assumptions of the corollary, Theorem \ref{th:post_contr} delivers a valid upper bound on the posterior contraction rate for every $J \in \mathcal{A}$ including the one for which 
the minimum in \eqref{eq:eps_w_min} is attained.  Hence, the corollary is an immediate implication of  Theorem \ref{th:post_contr} whose proof is presented in the following section.

The results on lower bounds in Section \ref{sec:main_results} hold for
any class of data generating densities that includes $f_0$ satisfying the following conditions:
$f_0 \in \mathcal{C}^{\beta_1,\ldots,\beta_d,L}$, $f_0=0$ outside $[0,1]^d$, and
$\overline{f} \geq f_0\geq \underline{f}>0$, where 
$L$, $\overline{f}$, and $\underline{f}$ are finite positive constants.  
It is worth pointing out that these conditions imply 
Assumptions \ref{as:f0_subexp_tail}-\ref{as:Ef0_tildeL},
which combined with Assumption \ref{as:Nj_o_n_1nu} for $N_{J_\ast}$ and a prior specified in Section \ref{sec:model_prior} would deliver 
the sufficient conditions of Corollary \ref{cl:post_contr_min}.

\subsection{Proof of Posterior Contraction Results}
\label{sec:post_c_proof_discuss}

To prove Theorem \ref{th:post_contr}, we use
the following sufficient conditions for posterior contraction from Theorem 2.1 in \cite{GhosalVaart:01}.
Let $\epsilon_n$ and $\tilde{\epsilon}_n$ be positive sequences with 
$\tilde{\epsilon}_n \leq \epsilon_n$,  $\epsilon_n \to 0$, and $n \tilde{\epsilon}_n^2 \to \infty$, and
$c_1$, $c_2$, $c_3$, and $c_4$ be some positive constants. 
Let $\rho$ be Hellinger or total variation distance.
Suppose $\mathcal{F}_n \subset \mathcal{F}$ is a sieve with the following bound on the metric entropy $M_e( \epsilon_n, \mathcal{F}_n, \rho)$
	\begin{eqnarray} \label{eq:entropy}
	\log M_e( \epsilon_n, \mathcal{F}_n, \rho) \leq c_1 n \epsilon_n^2, 
	\end{eqnarray}
	\begin{eqnarray}\label{eq:sievecomplement}
	\Pi(\mathcal{F}_n^c) \leq c_3 \exp\{ -(c_2+4)n \tilde{\epsilon}_n^2\}.
	\end{eqnarray}
	Suppose also that the prior thickness condition holds
	\begin{equation}
	\label{eq:prior_thick}
	\Pi(\mathcal{K}(p_0,\tilde{\epsilon}_n)) \geq c_4 \exp\{ -c_2 n \tilde{\epsilon}_n^2\},
	\end{equation}
	where the generalized Kullback-Leibler neighborhood $\mathcal{K}(p_0,\tilde{\epsilon}_n)$ is defined by
	\begin{align*}
\mathcal{K}(p_0,\epsilon)=\left \{p: \; \int_{\mathcal{X}} \sum_{y\in \mathcal{Y}} p_0(y,x) \log \frac{ p_0(y,x)}{p(y,x)}dx< \epsilon^2, \; \int_{\mathcal{X}} \sum_{y\in \mathcal{Y}} p_0(y,x) \left[\log\frac{ p_0(y,x)}{p(y,x)}\right]^2 dx< \epsilon^2
 \right \}.
\end{align*}
Then, there exists $\bar{M}>0$ such that 
\[
\Pi\left( p: \rho(p,p_0) > \bar{M} \epsilon_n | Y^n,X^n\right) \stackrel{P_0^n}{\rightarrow} 0.
\]

The definition of the sieve  and verification of conditions \eqref{eq:entropy} and \eqref{eq:sievecomplement} closely follow analogous results in the literature on contraction rates for mixture models
in the context of density estimation.  The details are given in Lemma \ref{lm:sieve_n} in Appendix \ref{sec:app_sieve_entropy}.
Verification of the prior thickness condition is more involved and we formulate it as a separate result in the following theorem.

\begin{theorem} \label{th:prior_thickness}
Suppose the assumptions from Sections \ref{sec:model_prior}-\ref{sec:assumpt_dgp} hold for a given $J \in \mathcal{A}$.
Let $t_J > t_{J0}$, where $t_{J0}$ is defined in \eqref{eq:t0def}, and
\begin{equation}
\label{eq:def_tilde_eps}
\tilde{\epsilon}_n =  
\left[\frac{N_J}{n}\right]^{\beta_{J^c}/(2\beta_{J^c} + 1 )} (\log n)^{t_J}.
\end{equation}
For any $C > 0$ and all sufficiently large $n$,
\begin{eqnarray}\label{eq:KL}
\Pi( \mathcal{K}(p_0,  \tilde{\epsilon}_n)) \geq \exp \{-C n \tilde{\epsilon}_n^2 \}.
\end{eqnarray}  
\end{theorem}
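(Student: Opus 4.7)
The plan is to construct an explicit finite mixture of normals $f^\ast$, and its integrated counterpart $p^\ast(y,x)=\int_{A_y} f^\ast(\tilde y,x)d\tilde y$, such that $p^\ast\in\mathcal{K}(p_0,\tilde\epsilon_n)$ and a small parameter neighborhood of $f^\ast$ carries prior mass at least $\exp\{-Cn\tilde\epsilon_n^2\}$. The construction is an anisotropic discrete-continuous variant of the mixture-approximation argument of \cite{ShenTokdarGhosal2013}, organized around the split between coordinates in $J$ (resolved at the grid scale $1/N_i$, no smoothing) and coordinates in $J^c$ (smoothed at anisotropic bandwidth $\sigma_i=\tilde\epsilon_n^{1/\beta_i}$).

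First, for each $y_J\in\mathcal{Y}_J$ with $\pi_{0J}(y_J)>0$, I would use a Taylor expansion based on Definition \ref{def:anis_smooth} together with Assumption \ref{as:f0_in_CbL} to approximate the conditional $f_{0|J}(\cdot\mid y_J)$ on $\tilde{\mathcal{X}}$ by a location mixture of normals with diagonal scale $\sigma_{J^c}$. The subexponential tail condition of Assumption \ref{as:f0_subexp_tail} lets me truncate to a cube of side $O((\log n)^{1/\tau})$, producing $K\lesssim \tilde\epsilon_n^{-1/\beta_{J^c}}(\log n)^{d_{J^c}/\tau}$ components per $y_J$ with KL and $V_2$ approximation errors $\lesssim\tilde\epsilon_n^2$ (the $V_2$ part uses the higher moment bounds of Assumption \ref{as:Ef0_tildeL}). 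I would then assemble the joint $f^\ast$ by placing the $y_J$-coordinates of the component means at the $\mathcal{Y}_J$-grid, weighting each $y_J$-block by $\pi_{0J}(y_J)$, and picking $\sigma_i$, $i\in J$, as a small negative power of $n$---still in the effective support of \eqref{eq:asnPrior_sigma3}, yet small enough that after integration over $A_{y_J}$ each block leaks $o(\tilde\epsilon_n/N_J)$ mass outside its own cell. Assumption \ref{as:f0_Ay_tail_unif_x} controls the analogous boundary leakage across the unbounded $A_{y_I}$ cells, and Jensen's inequality then transfers the KL/$V_2$ bounds from $f^\ast$ to the integrated $p^\ast$, so that $p^\ast\in\mathcal{K}(p_0,\tilde\epsilon_n)$.

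Second, with $m^\ast\lesssim N_J\tilde\epsilon_n^{-1/\beta_{J^c}}(\log n)^{d_{J^c}/\tau}$ total components, I would lower bound the prior mass of a parameter neighborhood of $f^\ast$ by multiplying four factors: \eqref{eq:asnPrior_m} gives $\exp(-c_1 m^\ast(\log m^\ast)^{\tau_1})$; the Dirichlet$(a/m,\dots,a/m)$ prior restricted to weights within $\tilde\epsilon_n/m^\ast$ of the targets gives $\exp(-c_2 m^\ast\log n)$ by the standard Dirichlet small-ball inequality (using $\log(1/\tilde\epsilon_n)\asymp\log n$ from Assumption \ref{as:Nj_o_n_1nu}); the location prior \eqref{eq:asnPrior_mu_lb}--\eqref{eq:asnPrior_mu_tail_ub}, applied to boxes of side $\tilde\epsilon_n\sigma_i/m^\ast$ around each $J^c$-target in $[-(\log n)^{1/\tau},(\log n)^{1/\tau}]$ and to $1/N_i$-cells around each $J$-grid target, gives $\exp(-c_3 m^\ast[(d_{J^c}+1/\beta_{J^c})\log n+d_{J^c}(\log n)^{\tau_2/\tau}])$; and \eqref{eq:asnPrior_sigma3} restricted to multiplicative $(1+\tilde\epsilon_n)$-windows around the target scales gives $\exp(-c_4 N_J\log n)$, which is subdominant under Assumption \ref{as:Nj_o_n_1nu}. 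Matching $(\log n)$-exponents on both sides of the desired inequality then identifies $t_{J0}$ in \eqref{eq:t0def} as the exact threshold above which the log of the product of these four factors is at most $Cn\tilde\epsilon_n^2$. A standard $V_2$-to-KL conversion passes the resulting Hellinger-neighborhood bound to the required bound on $\Pi(\mathcal{K}(p_0,\tilde\epsilon_n))$.

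The hard part will be the tight accounting of log factors that makes $t_J>t_{J0}$ both necessary and sufficient. In particular, the exponent $2+\varepsilon\beta_{J^c}^{-1}d_{J^c}^{-1}$ appearing in Assumption \ref{as:Ef0_tildeL} is calibrated precisely so that the Taylor-remainder and tail corrections contribute exactly the $d_{J^c}[1+1/(\beta_{J^c}d_{J^c})+1/\tau]$ term in the numerator of $t_{J0}$. A second delicate point is that forcing $\sigma_i$, $i\in J$, small enough to resolve the $1/N_i$ grid while remaining attainable by the prior costs $\exp(-c_4 N_J\log n)$ under \eqref{eq:asnPrior_sigma3}; this is only subdominant because Assumption \ref{as:Nj_o_n_1nu} forces $N_J\ll n^{1-\nu}$, which is precisely why that assumption is needed at this stage of the proof.
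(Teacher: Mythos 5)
Your proposal follows the same overall architecture as the paper's proof: use the Shen--Tokdar--Ghosal approximation to build, for each $y_J$, a mixture $f^\star_{|J}(\cdot|y_J)$ close to $f_{0|J}(\cdot|y_J)$; assemble the joint by placing the $J$-coordinates of the component means on the grid $\mathcal{Y}_J$ with weights $\pi_{0J}(y_J)$ and with $\sigma_i$, $i\in J$, small enough to resolve the $1/N_i$ cells; and then multiply prior small-ball probabilities for the number of components, the Dirichlet weights, the locations, and the scales. Your bookkeeping of the four prior factors matches the structure of Lemma \ref{lm:prior_bound}, and your reading of where Assumptions \ref{as:Ef0_tildeL} and \ref{as:Nj_o_n_1nu} enter is broadly right, though the $N_J\log n$ cost from the $\sigma_J$ prior in \eqref{eq:asnPrior_sigma3} is dominated by the $N_J\tilde\epsilon_n^{-d_{J^c}/\beta}(\log n)^{\cdots}$ terms simply because $\tilde\epsilon_n^{-d_{J^c}/\beta}\ge 1$; Assumption \ref{as:Nj_o_n_1nu} is needed primarily to make $\log(1/\tilde\epsilon_n)\asymp\log n$, not specifically to tame that one factor.

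The genuine gap is in the step where you conclude $p^\ast\in\mathcal{K}(p_0,\tilde\epsilon_n)$. You write that "Jensen's inequality then transfers the KL/$V_2$ bounds from $f^\ast$ to the integrated $p^\ast$." This works for the Kullback--Leibler term — it is a convex-$\phi$ $f$-divergence, so it obeys the data-processing inequality, which is part \eqref{eq:dKLp_dKLf} of Lemma \ref{lm:p_f_distance_bds} — but the second functional $\int p_0\bigl(\log(p_0/p)\bigr)^2$ is not of this form ($\phi(t)=(\log t)^2$ is not convex on $(0,\infty)$) and does not satisfy a data-processing inequality, so Jensen does not transfer it. More fundamentally, the STG approximation gives only a Hellinger bound $d_H(f_{0|J}(\cdot|y_J),f^\star_{|J}(\cdot|y_J))\lesssim\sigma_n^\beta$, not KL or $V_2$ bounds: in the tails the mixture can be much smaller than $f_{0|J}$, so $\log(f_{0|J}/f^\star_{|J})$ can be large. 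Converting a Hellinger bound into KL and $V_2$ bounds requires the machinery of Lemma \ref{lm:dH_KL} (the Ghosal--van~der~Vaart device), which in turn requires (i) the Hellinger bound for $p$ — that does transfer by \eqref{eq:dHp_dHf} —, (ii) a uniform lower bound $p(\cdot\,|\theta,m)/p_0\ge\lambda_n$ on the bulk $\|x\|\le a_{\sigma_n}$, and (iii) an explicit sub-Gaussian lower bound on that ratio on the tail $\|x\|>a_{\sigma_n}$, combined with the tail estimate \eqref{eq:tail_prob_bound} and Assumptions \ref{as:f0_subexp_tail}--\ref{as:f0_Ay_tail_unif_x}. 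This is Lemma \ref{lm:Bound_Model_truef_ratio} and the calculation in \eqref{eq:exp_log_ratio2}; your proposal has no analogue of it, and without it the claimed membership in $\mathcal{K}(p_0,\tilde\epsilon_n)$ is not established. Relatedly, it is not enough that the single point $p^\ast$ lie in $\mathcal{K}(p_0,\tilde\epsilon_n)$; the whole parameter neighborhood carrying the prior mass must map into $\mathcal{K}(p_0,A\tilde\epsilon_n)$ for a fixed $A$, which is why the paper bounds the Hellinger distance and ratio uniformly over $\theta\in S_{\theta^\star}$ rather than only at $\theta^\star$.
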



Approximation results are key for showing the prior thickness condition \eqref{eq:KL}.
Appropriate approximation results for $f_{0J}(y_J,\tilde{x})= f_{0|J}(\tilde{x}|y_J) \pi_{0J}(y_J)$ are obtained as follows.
Based on approximation results for continuous densities by normal mixtures from \cite{ShenTokdarGhosal2013}, we obtain approximations
for $f_{0|J}(\cdot|y_J)$ for every $y_J$ in the form
\begin{equation}
\label{eq:f_J_def}
f_{|J}^\star(\tilde{x}|y_J)=\sum_{j=1}^K \alpha^\star_{j|y_J}\phi(\tilde{x};\mu^\star_{j|y_J}, 
\sigma_{J^c}^\star),
\end{equation}  
where the parameters of the mixture will be defined precisely below.
For the discrete variables over which smoothing is not performed, $y_J$, we show that
$\pi_{0J}(y_J)$ can be appropriately approximated by
\[
\int_{A_{y_J}} \sum_{y_J^\prime} \pi_{0J}(y_J^\prime) \phi(\tilde{y}_J; y_J^\prime, \sigma_{J}^\star) d\tilde{y}_J,
\]
where $\int_{A_{y_J}} \phi(\tilde{y}_J, y_J^\prime, \sigma_{J}^\star) d\tilde{y_J}$ behaves like an indicator $1\{{y_J}=y_J^\prime\}$ for sufficiently small $\sigma_{J}^\star$.  
The following subsection presents proof details.

\subsubsection{Proof of Theorem \ref{th:prior_thickness} for $J^c\neq \emptyset$}
Define $\beta= d_{J^c} \left [   \sum_{k \in J^c} \beta_k^{-1}\right ]^{-1}$, $\beta_{\min}=\min_{j\in J^c} \beta_j$, and $\sigma_n = [\tilde{\epsilon}_n / \log (1/ \tilde{\epsilon}_n) ]^{1/\beta}$.
For $\varepsilon$ defined in \eqref{eq:asnE0Dff0}-\eqref{eq:asnE0Lf0},
$b$ and $\tau$ defined in \eqref{eq:tail_cond2},
and a sufficiently small $\delta>0$,
let $a_0 = \{ (8\beta + 4\varepsilon +8 + 8 \beta/\beta_{\min})/(b \delta)\}^{1/\tau}$,
$a_{\sigma_n} = a_0 \{\log (1/\sigma_n) \}^{1/\tau}$, 
and $b_1 > \max \{1, 1/ 2\beta \}$ satisfying $\tilde{\epsilon}_n^{b_1}  \{  \log (1/ \tilde{\epsilon}_n) \}^{5/4} \leq \tilde{\epsilon}_n$.
Then, the proofs of Theorems 4 and 6 in \cite{ShenTokdarGhosal2013} 
imply the following two claims for each $y_J=k\in\mathcal{Y}_J$ under the assumptions of Section \ref{sec:assumpt_dgp}.

First, 
there exists a partition $\{U_{j|k}, j=1,\ldots,K\}$ of $\{\tilde{x} \in \tilde{\mathcal{X}}: ||\tilde{x}||
 \leq 2a_{\sigma_n}\}$,
 such that for $j=1,\ldots,N$,
$U_{j|k}$ is contained within an ellipsoid with
center $\mu^\star_{j|k}$ and radii $\{ \sigma_n^{\beta/\beta_i} \tilde{\epsilon}_n^{2 b_1},
\, i \in J^c\}$ 
\[
U_{j|k} \subset \left\{ \tilde{x}: \sum_{i=1}^{d_{J^c}} \left[(\tilde{x}_i-\mu^\star_{j|k,i})/(\sigma_n^{\beta/\beta_{d_J+i}} \tilde{\epsilon}_n^{2b_1})\right]^2\leq 1 \right\};\]
for $j=N+1,\ldots,K$,
$U_{j|k}$ is   contained within an ellipsoid 
with 
radii
$\{ \sigma_n^{\beta/\beta_i}, \, i \in J^c\}$,
and $1 \leq N < K \leq C_1 \sigma_n^{-d_{J^c}} \{\log (1/ \tilde{\epsilon}_n) \}^{d_{J^c}+d_{J^c}/\tau}$, 
where $C_1>0$ does not depend on $n$ and $y_J$.

Second, for each $k\in\mathcal{Y}_J$ there exist
$\alpha_{j|k}^\star$, $j = 1,\ldots,K$,  with $\alpha_{j|k}^\star=0$ for $j > N$,
and $\mu_{jk}^{x\star} \in U_{j|k}$ for $j=N+1,\ldots,K$
such that for a positive constant $C_{2}$ and
$\sigma^\star_{J^c}=\{\sigma_n^{\beta/\beta_i}$ for $i\in J^c\}$, 
\begin{equation}
\label{eq:f0upsbeta}
d_H\left(f_{0|J}(\cdot|k),f_{|J}^\star(\cdot|k)\right) \leq C_{2} \sigma_n^\beta,
\end{equation}
where $f_{|J}^\star$ is defined in \eqref{eq:f_J_def}.
Constant $C_{2}$ is the same for all $k \in \mathcal{Y}_J$ since all the bounds on $f_{0|J}$ assumed in Section \ref{sec:assumpt_dgp} are uniform over $k$. 

Note also that our smoothness definition is different from the one used by \cite{ShenTokdarGhosal2013}. 
In Lemmas \ref{lm:AnisoTaylor} and \ref{lm:AnisoTaylorRemainderBd} we show that our smoothness definition ($ f_{0|J} \in \mathcal{C}^{L,\beta_{d_J+1},\ldots,\beta_{d}})$ delivers an anisotropic Taylor expansion with bounds on remainder terms such that the argument on p. 637 of \cite{ShenTokdarGhosal2013} goes through.

Third, by Lemma \ref{lm:ProbBoundOutside}, which is an extension of a part of Proposition 1 in \cite{ShenTokdarGhosal2013},
 there exists a constant $B_0>0$ such that for all $y_J\in\mathcal{Y}_{J}$
\begin{align}
\label{eq:tail_prob_bound}
F_{0|J}\left(||\tilde{X}|| > a_{\sigma_n}| y_J \right)\leq B_0 \sigma_n^{4\beta+2\varepsilon}\underline{\sigma}_{n} ^8,
\end{align}
where 
\[
\underline{\sigma}_{n} = \min_{i\in J^c}\sigma_{n}^{\beta/\beta_i}. 
\]

For $m=N_JK$ we define $\theta^\star$ and $S_{\theta^\star}$ as:
\begin{align*}
\theta^\star  = \bigg\{ 
& \{ \mu_1^\star,\ldots,\mu_m^\star \} = 
\left\{ 
(k,\mu_{j|k}^{\star}),\:
j=1,\ldots,K, \; k\in\mathcal{Y}_J \right\},
\\
& \{\alpha_1^\star,\ldots,\alpha_m^\star\}  
= \left\{ \alpha_{jk}^\star=\alpha_{j|k}^{\star}\pi_{0J}(k), \:
j=1,\ldots,K, \; k\in\mathcal{Y}_J \right\},
\\
&\sigma_{J}^{\star2}=\{\sigma_i^{\star2}=1/[64 N_i^2 \beta \log (1/\sigma_n)], \: i \in J \}
\\
&\sigma_{J^c}^\star=\{\sigma_i^\star=\sigma_{n}^{\beta/\beta_i}, \: i \in J^c \},
\bigg \}
\end{align*}
\begin{align*}
S_{\theta^\star} =  
\bigg\{ 
& \{\mu_1,\ldots,\mu_m\} =  
\left\{ 
(\mu_{jk,J},\mu_{jk,J^c}), \:
j=1,\ldots,K, \, k \in \mathcal{Y}_J \right\},
\\
& 
\mu_{jk,J^c} \in U_{j|k}, \;\; \mu_{jk,i} \in  \left[ k_i - \frac{1}{4N_i}, k_i + \frac{1}{4N_i}\right] , \, i \in J,
\\
& \sigma_i^2 \in \left( 0, \sigma_{i}^{\star2} \right), \,  i \in J,
\\
&\sigma_i^2 \in \left(\sigma_{i}^{\star2}(1+\sigma_n^{2\beta})^\inv, \sigma_{i}^{\star2}\right), \,  i \in J^c,
\\
& (\alpha_1,\ldots,\alpha_m) = 
\{ \alpha_{jk}, \, j=1,\ldots,K, \, k\in\mathcal{Y}_J \} \in \Delta^{m - 1}, \\
& 
\sum_{r=1}^{m}
|\alpha_{r}-\alpha_{r}^\star| \leq  2 \sigma_n^{2\beta},\:\:\:\:\:
\min_{j\leq K, k\in\mathcal{Y}_J}   \alpha_{jk} \geq  \frac{\sigma_n^{2\beta + d_{J^c}}}{2m^2}
 \bigg\}.
\end{align*}


The rest of the proof of the Kullback-Leibler thickness condition follows the general argument developed for mixture models in \cite{GhosalVandervaart:07} and \cite{ShenTokdarGhosal2013} among others.
First, we will show that for $m=N_JK$ and $\theta\in S_{\theta^\star}$, the Hellinger distance $d_H^2(p_0(\cdot,\cdot), p(\cdot,\cdot | \theta, m ))$ can be bounded by
$\sigma_n^{2\beta}$ up to a multiplicative constant.  Second, we construct bounds on the ratios $p(\cdot,\cdot | \theta, m )/p_0(\cdot,\cdot)$ and combine them with the bound on the Hellinger distance using 
Lemma \ref{lm:dH_KL}.  Finally, we will show that the prior puts  sufficient probability on  $m=N_JK$ and $S_{\theta^\star}$.

For $f_{|J}^\star$ defined in \eqref{eq:f_J_def}, let us define 
\[
p_{|J}^\star(y_I,x |y_J)=\int_{A_{y_I}} f_{|J}^\star(\tilde{y}_I,x |y_J) d\tilde{y}_I.
\]
For $m=N_JK$ and $\theta\in S_{\theta^\star}$, we can bound the Hellinger distance between the DGP and the model as follows,
\begin{align*}
&d_H^2(p_0(\cdot,\cdot), p(\cdot,\cdot | \theta, m ))  =d_H^2(p_{0|J}(\cdot|\cdot)\pi_0(\cdot), p(\cdot,\cdot | \theta, m )) \notag  \\
&\leq
d_H^2(p_{0|J}(\cdot|\cdot)\pi_{0J}(\cdot), p_{|J}^\star(\cdot|\cdot)\pi_{0J}(\cdot)) +
d_H^2(p_{|J}^\star(\cdot|\cdot)\pi_{0J}(\cdot), p(\cdot,\cdot | \theta, m )).
\end{align*}
It follows from \eqref{eq:f0upsbeta} and 
Lemma \ref{lm:p_f_distance_bds} linking distances between probability mass functions and corresponding latent variable densities that the first term on the right hand side of this inequality is bounded by 
$(C_2)^2\sigma_n^{2\beta}$.
Combining this result with the bound on $d_H^2(p_{|J}^\star(\cdot|\cdot)\pi_{0J}(\cdot), p(\cdot,\cdot | \theta, m ))$ from Lemma \ref{lm:H_dist_model_pi0_model} we obtain
\begin{align}
\label{eq:H2_bound_dgp_model}
d_H^2(p_0(\cdot,\cdot), p(\cdot,\cdot | \theta, m )) \lesssim \sigma_n^{2\beta}.
\end{align}

Next, for $\theta \in S_{\theta^\star}$ and $m=N_JK$, let us consider lower bounds on the ratio $p(y_J,y_I,{x} | \theta, m)/p_{0}(y_J,y_I,x)$.
In Lemma \ref{lm:Bound_Model_truef_ratio} in the Appendix we show that 
lower bounds on the ratio 
$f_J(y_J,\tilde{x} | \theta, m)/f_{0|J}(\tilde{x} | y_J)\pi_{0}(y_J)$
imply the following bounds for all sufficiently large $n$:
for any ${x} \in {\mathcal{{X}}}$  with $\norm{{x}} \leq a_{\sigma_n}$,
\begin{align}
\label{eq:define_lambda_n_MAIN}
\frac{p(y_J,y_I,{x} | \theta, m)}{p_{0}( y_J,y_I,x)} 
\geq
C_3  \frac{\sigma_n^{2\beta }}{2m^2}
\equiv \lambda_n,
\end{align}
for some constant $C_3>0$; and for any ${x} \in {\mathcal{{X}}}$  with $\norm{{x}} > a_{\sigma_n}$,
\begin{align}
\label{eq:boundratio_forlarge_x_MAIN}
\frac{p(y_J,y_I,{x} | \theta, m)}{p_{0}( y_J,y_I,x)} 
 \geq 
\exp\left\{ - \frac{8||{x}||^2}{\underline{\sigma}_{n}^2} - C_4 \log n \right\},
\end{align}
for some constant $C_4>0$.
Consider all sufficiently large $n$ such that 
$\lambda_n < e^{-1}$
and \eqref{eq:define_lambda_n_MAIN}
and \eqref{eq:boundratio_forlarge_x_MAIN} hold.
Then, for any $\theta \in S_{\theta^\star}$,
\begin{align}
\label{eq:exp_log_ratio2}
&
\sum_{y\in\mathcal{Y}}
 \int_{{\mathcal{X}}} 
\bigg( \log \frac{p_0(y_J,y_I,x)}{p(y_J,y_I,{x}| \theta, m)}\bigg)^2 
\indic \left \{ \frac{p(y_J,y_I,{x}| \theta, m)}{p_0(y_J,y_I,x)} < \lambda_n \right\}
p_0(y_J,y_I,x) d{x} \notag \\
&=
\sum_{y\in\mathcal{Y}}
\int_{\tilde{\mathcal{X}}} 
\bigg( \log \frac{p_0(y_J,y_I,x)}{p(y_J,y_I,{x}| \theta, m)}\bigg)^2 
\indic \left \{ \frac{p(y_J,y_I,{x}| \theta, m)}{p_0(y_J,y_I,x)} < \lambda_n
 \right\}
\indic \left \{\tilde{y}_I\in A_{y_I} \right\}
f_{0J}(y_J,\tilde{x}) d\tilde{x} \notag \\
& =
\sum_{y\in\mathcal{Y}}
 \int_{\tilde{\mathcal{X}}}
\bigg( \log \frac{p_0(y_J,y_I,x)}{p(y_J,y_I,{x}| \theta, m)}\bigg)^2 
\indic \left \{ \frac{p(y_J,y_I,{x}| \theta, m)}{p_0(y_J,y_I,x)} < \lambda_n , ||{x}||> a_{\sigma_n}
,\tilde{y}_I\in A_{y_I}
\right\}
f_{0J}(y_J,\tilde{x}) d\tilde{x} \notag\\
& \leq
\sum_{y\in\mathcal{Y}}
 \int_{\left\{\tilde{x}:||{x}||>a_{\sigma_n}\right\}}
\bigg( \log \frac{p_0(y_J,y_I,x)}{p(y_J,y_I,{x}| \theta, m)}\bigg)^2 
\indic \left \{\tilde{y}_I\in A_{y_I} \right\}
f_{0J}(y_J,\tilde{x}) d\tilde{x} 
 \notag
\\
& \leq
\sum_{y\in\mathcal{Y}}
 \int_{\left\{\tilde{x}:||{x}||>a_{\sigma_n}\right\}} \left [
\frac{128}{\underline{\sigma}_n^4} ||{x}||^4 + 2(C_4 \log n)^2\right]f_{0|J}(\tilde{x}|y_J)
\indic \left \{\tilde{y}_I\in A_{y_I} \right\}
 d\tilde{x}
\pi_{0J}(y_J)\notag \\
&
\leq 
\sum_{y_J\in\mathcal{Y}_J}
 \int_{\left\{\tilde{x}:||\tilde{x}||>a_{\sigma_n}\right\}} 
\left [
\frac{128}{\underline{\sigma}_n^4} ||\tilde{x}||^4 + 2(C_4 \log n)^2\right]
 f_{0|J}(\tilde{x}|y_J) d\tilde{x}
\pi_{0J}(y_J)
\notag \\
&\leq
\frac{128}{\underline{\sigma}_n^4}
 \sum_{y_J\in\mathcal{Y}_J} E_{0|y_J}\left(\norm{\tilde{X}}^8\right)^{1/2} \left(F_{0|y_J}\left(\norm{\tilde{X}} > a_{\sigma_n}\right)\right)^{1/2} \pi_{0J}(y_J) 
+ 2(C_4 \log n)^2 B_0 \sigma_n^{4\beta+2\varepsilon}\underline{\sigma}_{n} ^8
\notag
\\
& \leq C_5 \sigma_n^{2\beta + \varepsilon}
 \end{align}
  for some constant $C_{5}>0$ and 
	all sufficiently large $n$,
	where the last inequality holds by the tail condition in \eqref{eq:tail_cond2}, \eqref{eq:tail_prob_bound}, and $(\log n)^2\sigma_n^{2\beta+\varepsilon}\underline{\sigma}_{n}^8 \rightarrow 0$. 
  
 Furthermore, as $\lambda_n< e^\inv$, 
 \begin{align*}
& \log \frac{p_0(y_J,y_I,x)}{p(y_J,y_I,{x}| \theta, m)}
\indic \left \{ \frac{p(y_J,y_I,{x}| \theta, m)}{p_0(y_J,y_I,x)} < \lambda_n \right\}
\\
&\leq
 \bigg( \log \frac{p_0(y_J,y_I,x)}{p(y_J,y_I,{x}| \theta, m)}\bigg)^2 
\indic \left \{ \frac{p(y_J,y_I,{x}| \theta, m)}{p_0(y_J,y_I,x)} < \lambda_n \right\} 
\end{align*}
 and, therefore,
 \begin{align}
\label{eq:exp_log_ratio}
\sum_{y\in\mathcal{Y}}
 \int_{{\mathcal{X}}} 
 \log \frac{p_0(y_J,y_I,x)}{p(y_J,y_I,{x}| \theta, m)}
\indic \left \{ \frac{p(y_J,y_I,{x}| \theta, m)}{p_0(y_J,y_I,x)} < \lambda_n \right\}
p_0(y_J,y_I,x) d{x}
\leq C_{5} \sigma_n^{2\beta + \varepsilon}.
 \end{align}

Inequalities 
\eqref{eq:H2_bound_dgp_model},  \eqref{eq:exp_log_ratio2}, and \eqref{eq:exp_log_ratio}
combined with Lemma \ref{lm:dH_KL} imply
\[
E_0 \left(  \log \frac{p_0(y_J,y_I,x)}{p(y_J,y_I,{x}| \theta, m)} \right)\leq A\tilde{\epsilon}_n^2, \;
E_0 \left( \left[   \log \frac{p_0(y_J,y_I,x)}{p(y_J,y_I,{x}| \theta, m)} \right]^2 \right) \leq A\tilde{\epsilon}_n^2
\]
for any $\theta \in S_{\theta^\star}$, $m=N_JK$, and some positive constant $A$ (details are provided in Lemma \ref{lm:gKL_inequality} in the Appendix).

By Lemma \ref{lm:prior_bound} in the Appendix
for all sufficiently large $n$,  $s = 1 + 1/\beta + 1/\tau$, and some $C_6>0$,
\begin{eqnarray*}
\Pi( \mathcal{K}(p_0, \tilde{\epsilon}_n))  \geq \Pi(m=N_JK, \theta \in S_{\theta^\star} )  \geq 
\exp \left[-C_6 N_J\tilde{\epsilon}_n^{-d_{J^c}/\beta} \{\log (n)\}^{d_{J^c}s + \max\{\tau_1,1,\tau_2/\tau\} }\right] .
\end{eqnarray*}
The last expression of the above display is bounded below by $\exp\{-C n \tilde{\epsilon}_n^2 \}$ for any $C>0$,
$\tilde{\epsilon}_n = \left[ \frac{N_J}{n}\right]^{\beta/(2\beta + d_{J^c})} (\log n)^{t_J}$, any $t_J > (d_{J^c}s + \max\{\tau_1,1,\tau_2/\tau\}) / (2 + d_{J^c}/\beta)$, and all sufficiently large $n$.  Since the inequality in the definition of $t_J$ is strict, the claim of the theorem follows.

When $J = \emptyset$ and $N_J=1$, the preceding argument delivers the claim of the theorem if 
we add an artificial discrete coordinate with only one possible value to the vector of observables.

\subsubsection{Proof of Theorem \ref{th:prior_thickness} for $J^c= \emptyset$}

In this case, the proof from the previous subsection can be simplified as follows. 
For $m=N_J$ and for any $\beta>0$ we define $\theta^\star$ and $S_{\theta^\star}$ as
\begin{align*}
\theta^\star= \bigg\{
&\{\mu_1^\star,\ldots,\mu_m^\star\} = \left\{ 
k, \; k\in\mathcal{Y}_J \right\},
\\
&\{\alpha_1^\star,\ldots,\alpha_m^\star\} = \left\{ 
\alpha_{k}^\star, \; k\in\mathcal{Y}_J \right\}
=  \left\{ 
\pi_{0}(k)
\right\}_{k\in\mathcal{Y}_J},
\\
&\sigma^{\star2}=\{\sigma_i^{\star2} = \frac{1}{64 N_i^2 \beta \log (1/\sigma_n)},\; i \in J\}
\bigg\},
\end{align*}
\begin{align*}
S_{\theta^\star} =  
&\bigg\{ 
\{\mu_1,\ldots,\mu_m\} = \left\{ 
\mu_{k}, \; k\in\mathcal{Y}_J
\right\}, \;
  \mu_{k,i} \in  \left[ k_i - \frac{1}{4N_i}, k_i + \frac{1}{4N_i}\right], \; i=1,\ldots,d_J,\\
&\sigma= \{\sigma_i \in (0, \sigma_i^\star), \;  i \in J\},\\
& \{\alpha_{j}, \; j=1,\ldots,m\} = \{\alpha_{k}, \; k \in \mathcal{Y}_J\} \in \Delta^{m - 1},\\
& \sum_{k\in\mathcal{Y}_J} 
|\alpha_{k}-\alpha_{k}^\star| \leq  2 \sigma_n^{2\beta}, \;\;\;\;\;
\min_{k\in\mathcal{Y}_J}   \alpha_{k} \geq  \frac{\sigma_n^{2\beta }}{2m^2}
 \bigg\}.
\end{align*}

For $m=N_J$ and $\theta\in S_{\theta^\star}$,
a simplification of the proof of Lemma \ref{lm:H_dist_model_pi0_model} delivers
\begin{align*}
d_H^2(p_0(\cdot), p(\cdot | \theta, m ))  
\leq 
2 \max_{k\in\mathcal{Y}_J} \int_{A^c_{k}}\phi(\tilde{y}_J;\mu_{k},\sigma) d\tilde{y}_J
+
\sum_{k\in\mathcal{Y}_J} 
\left|
\alpha_{k}^\star
-
\alpha_{k} 
\right| \notag \lesssim \sigma_n^{2\beta}. \notag
\end{align*}

A simplification of derivations in Lemma \ref{lm:Bound_Model_truef_ratio} show that for all $y_J\in\mathcal{Y}_J$
\begin{align*}
\frac{p(y_J | \theta, m)}{p_{0}( y_J)} 
\geq
\frac{1}{2}  \frac{\sigma_n^{2\beta }}{2m^2}
\equiv \lambda_n. \notag
\end{align*}

Then,
for any $\theta \in S_{\theta^\star}$
\begin{align}
\sum_{y_J\in\mathcal{Y}_J}
\bigg( \log \frac{p_0(y_J)}{p(y_J| \theta, m)}\bigg)^2 
\indic \left \{ \frac{p(y_J| \theta, m)}{p_0(y_J)} < \lambda_n \right\}
p_0(y_J) &=0 \notag \\
\sum_{y_J\in\mathcal{Y}_J}
\bigg( \log \frac{p_0(y_J)}{p(y_J| \theta, m)}\bigg)
\indic \left \{ \frac{p(y_J| \theta, m)}{p_0(y_J)} < \lambda_n \right\}
p_0(y_J) &=0 \notag \\
 \end{align}
as $\frac{p(y_J| \theta, m)}{p_0(y_J)} \geq \lambda_n$ for all $y_J\in\mathcal{Y}_J$.
As $\lambda_n\rightarrow 0$, by Lemma \ref{lm:dH_KL} for $\lambda_n<\lambda_0$, 
  both
$E_0 (  \log \frac{p_0(y_J)}{p(y_J| \theta, m)} )$ and $E_0 ( [   \log \frac{p_0(y_J)}{p(y_J| \theta, m)} ]^2)$
 are bounded by  $C_{7} \log (1/\lambda_n)^2\sigma_n^{2\beta} \leq A\tilde{\epsilon}_n^2$ for some constant $A$.
By the simplification of Lemma \ref{lm:prior_bound} for this particular case
for all sufficiently large $n$ and some $C_{8}>0$,
\begin{eqnarray*}
\Pi( \mathcal{K}(p_0, \tilde{\epsilon}_n))  \geq \Pi(m=N_J, \theta \in S_{\theta^\star} )  \geq 
\exp \left[-C_{8}N_J \{\log (n)\}^{ \max\{\tau_1,1\} }\right] .
\end{eqnarray*}
The last expression of the above display is bounded below by $\exp\{-C n \tilde{\epsilon}_n^2 \}$ for any $C>0$,
$\tilde{\epsilon}_n = \left[ \frac{N_J}{n}\right]^{1/2} (\log n)^{t_J}$, any $t_J > \max\{\tau_1,1\} /2 $, and all sufficiently large $n$.  Since the inequality in the definition of $t_J$ is strict, the claim of the theorem follows.


\section{Future Work}
\label{sec:conclusion}

It seems feasible to extend the results of this paper to conditional density estimation by covariate dependent mixtures 
along the lines of \cite{norets_pati_2017}.  We leave this to future work.

\bibliographystyle{../../../latex_common/ecta}
\bibliography{../../../latex_common/allreferences}

\section*{Appendix}

\subsection{Proofs and Auxiliary Results for Lower Bounds}
\label{sec:proofs_extras}

\begin{lemma} 
\label{lm:q_tvd}
For $q_j$, $q_l$, $i\neq l$ defined in  \eqref{eq:q_j}, the total variation distance is bounded below by $\mbox{const} \cdot \Gamma_n$.
\end{lemma}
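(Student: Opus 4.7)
The plan is to reduce the total variation distance between $q_j$ and $q_l$ to the $L^1$ distance between the underlying latent densities $1_{[0,1]^d}+\sum_r w_r^j g_r$, using the crucial fact that the integration over $A_y$ does not cause any cancellation. Let $\epsilon_r=w_r^j-w_r^l\in\{-1,0,+1\}$, so that
\[
q_j(y,x)-q_l(y,x)=\int_{A_y}\sum_{r=1}^{\bar m}\epsilon_r g_r(\tilde y,x)\,d\tilde y.
\]

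The first key observation is a grid-alignment statement. For every $i\in\{1,\dots,d_y\}$ the bandwidth $h_i$ is chosen as an integer multiple of $1/N_i$ (equal to $2R_i/N_i$ for $i\in J_\ast^c$ and to $2/N_i$ for $i\in J_\ast$), while each $A_{y_i}$ is an interval of length $1/N_i$ with endpoints on the same lattice. Consequently each $A_{y_i}$ is either fully contained in the $i$-projection of exactly one $B_r$ or disjoint from every such projection. Combined with the analogous uniqueness in the continuous coordinates (the $x$-projections of the $B_r$'s are disjoint), this shows that for each pair $(y,x)$ there is at most one index $r^\ast(y,x)$ for which $\int_{A_y} g_r(\tilde y,x)\,d\tilde y$ is nonzero.

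The second key observation is that $g_r$ does not change sign inside $A_y$. Indeed $g$ is positive on $[-1/2,0)$ and negative on $(0,1/2]$, and the endpoints of each $A_{y_i}$ lie on the lattice of half-periods of $g((\cdot-c_i^r)/h_i)$. Hence each factor $g((\tilde y_i-c_i^r)/h_i)$ has constant sign on $A_{y_i}$, and therefore the product $g_{r^\ast(y,x)}(\tilde y,x)$ has constant sign in $\tilde y$ on $A_y$ (for each fixed $x$). This gives the no-cancellation identity $\bigl|\int_{A_y}g_r\,d\tilde y\bigr|=\int_{A_y}|g_r|\,d\tilde y$.

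Combining these two observations and applying Fubini,
\[
\sum_{y\in\mathcal{Y}}\int \bigl|q_j(y,x)-q_l(y,x)\bigr|\,dx
=\sum_{r=1}^{\bar m}|\epsilon_r|\,\|g_r\|_{L^1}
=\Gamma_n\,\|g\|_{L^1}^d\prod_{i=1}^d h_i\;\sum_{r=1}^{\bar m}|\epsilon_r|.
\]
By the Varshamov--Gilbert bound (Lemma \ref{lm:VarshamovGilbert}) applied to the construction of the $w^k$'s, $\sum_r|\epsilon_r|\geq \bar m/8$, and since $m_i h_i\geq 1-h_i\geq 1/2$ for all sufficiently large $n$, we have $\bar m\prod_i h_i\geq 2^{-d}$. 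Therefore $d_{TV}(q_j,q_l)\geq (2^{-d-4})\|g\|_{L^1}^d\Gamma_n$, which is the desired constant multiple of $\Gamma_n$.

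The main obstacle is the grid-alignment bookkeeping: one must verify that the bandwidths $h_i$ defined separately for $i\in J_\ast^c\cap\{1,\dots,d_y\}$ and for $i\in J_\ast$ both produce $A_{y_i}$ intervals that are subsets of a single half-period of the kernel $g((\cdot-c_i^r)/h_i)$, including the boundary cases in the definition of $A_{y_i}$. Once this alignment and the disjointness of the $B_r$'s are in place, the TV distance between $q_j$ and $q_l$ coincides with the $L^1$ distance between the latent densities, and the remainder is a one-line computation.
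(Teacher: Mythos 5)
Your proof is correct and follows essentially the same route as the paper's: reduce $d_{TV}(q_j,q_l)$ to the $L^1$ distance between the latent densities via (i) the disjointness of the $B_r$'s, (ii) the grid alignment of $h_i$ with $1/N_i$ giving a single $r^\ast(y,x)$, and (iii) the constant sign of each factor $g((\cdot-c_i^r)/h_i)$ on $A_{y_i}$, then apply Varshamov--Gilbert and $m_i h_i \geq 1/2$. The paper folds observations (ii)--(iii) into one sentence rather than isolating them as two lemma-level claims, but the underlying argument and the resulting constant $\mbox{const}\cdot\Gamma_n$ are the same.
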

\begin{proof} 
Let us establish several facts about $g_r$ in the definition of $q_j$.
For any $(\tilde{y},x) \in [0,1]^d$, there exists $r(\tilde{y},x)$ such that 
\begin{equation}
\label{eq:ryx}
g_r(\tilde{y},x)=0, \, \forall r\neq r(\tilde{y},x).
\end{equation}
For $(\tilde{y},x) \in B_r$, $r(\tilde{y},x)=r$ and for $(\tilde{y},x) \notin \cup_{r=1}^{\bar{m}} B_r$,
$r(\tilde{y},x)$ can have an arbitrary value.
Thus,
\begin{align*}
d_{TV}(q_j,q_l)&=\sum_{y} \int   \left| 
 \int_{A_y} \left [\sum_{r=1}^{\bar{m}} (w^j_r-w^l_r) g_r(\tilde{y},x) \right ] d\tilde{y}
\right|dx \\
&= 
\sum_{y} \int   \left| \int_{A_y}
 (w^j_{r(\tilde{y},x)}-w^l_{r(\tilde{y},x)})  g_{r(\tilde{y},x)}(\tilde{y},x)  d\tilde{y}
\right|dx.
\end{align*}
From $h_i = (2/N_i) \cdot R_i$ for $i \{1,\ldots,d_y\}$, where $R_i$ is a positive integer, 
and the definitions of $g$, $g_r$, and $A_y$, it follows that  
for fixed $y \in \mathcal{Y}$ and $x \in [0,1]^{d_x}$,
$(w^j_{r(\tilde{y},x)}-w^l_{r(\tilde{y},x)})  g_{r(\tilde{y},x)}(\tilde{y},x)$ does not change the sign
as $\tilde{y}$ changes within $A_y$ 
($r(\tilde{y},x)$ is the same $\forall \tilde{y} \in A_y$ by the choice of $c_i^r$ and $h_i$).
Therefore,
\begin{align}
d_{TV}(q_j,q_l)& = 
\int    \int \left|
 (w^j_{r(\tilde{y},x)}-w^l_{r(\tilde{y},x)})  g_{r(\tilde{y},x)}(\tilde{y},x) \right| d\tilde{y}
dx \nonumber \\
& =
\sum_{r=1}^{\bar{m}} \int_{B_r} \left|
 (w^j_{r(z)}-w^l_{r(z)})  g_{r(z)}(z) \right| dz \nonumber
\\
\label{eq:dtv_int}
& = 
\sum_{r=1}^{\bar{m}} |w^j_{r}-w^l_{r}| \int_{B_r} \left|
   g_{r}(z) \right| dz.
\end{align}
Finally,
\begin{align*}
d_{TV}(q_j,q_l)&=
\sum_{r=1}^{\bar{m}} 1\{w^j_{r}\neq w^l_{r}\} \cdot 
\Gamma_n \cdot \prod_{i=1}^d h_i \cdot  \left[ \int_{-1/2}^{1/2} |g(u)| d u \right]^d
& \mbox{(change of variables in \eqref{eq:dtv_int})}
\\
&\geq 
\Gamma_n \cdot \prod_{i=1}^d m_i h_i \cdot  \left[ \int_{-1/2}^{1/2} |g(u)| d u \right]^d/8
& \mbox{(by Lemma \ref{lm:VarshamovGilbert})}
\\
&\geq 
\Gamma_n \cdot \left[ \int_{-1/2}^{1/2} |g(u)| d u /2\right]^d/8
& \mbox{(since $m_i h_i > 1/2$).}
\end{align*}

\end{proof}

\begin{lemma} 
\label{lm:q_kl}
For $\Gamma_n \rightarrow 0$ and $\bar{m}\geq 8$ and a sufficiently small $c_0$ in the definition of $g$, condition
\eqref{eq:avgKLcond} in Lemma \eqref{lm:AbstrLBTsyb} holds for all sufficiently large $n$.

\end{lemma}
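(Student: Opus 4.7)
The plan is to bound $KL(Q_j^n,Q_0^n) = n\,KL(Q_j,Q_0)$ via the per-observation $\chi^2$ divergence and exploit the (near-)orthogonality of the bump functions $g_r$. Starting from the standard inequality $KL(Q_j,Q_0) \leq \chi^2(Q_j,Q_0) = \sum_y \int (q_j-q_0)^2/q_0\, dx$, I would first apply Cauchy--Schwarz to
\[
q_j(y,x) - q_0(y,x) = \int_{A_y \cap [0,1]^{d_y}} \sum_r w_r^j g_r(\tilde{y},x)\, d\tilde{y}
\]
(restricting to the support of the $g_r$). Since $q_0(y,x) = |A_y \cap [0,1]^{d_y}|\cdot\mathbf{1}_{[0,1]^{d_x}}(x)$, this yields
\[
\frac{(q_j - q_0)^2}{q_0} \leq \mathbf{1}_{[0,1]^{d_x}}(x) \int_{A_y \cap [0,1]^{d_y}} \Bigl(\sum_r w_r^j g_r(\tilde{y},x)\Bigr)^2 d\tilde{y}.
\]

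Because $\{A_y \cap [0,1]^{d_y}\}_{y \in \mathcal{Y}}$ partition $[0,1]^{d_y}$, summing in $y$ and integrating in $x$ collapses the chi-squared to $\int_{[0,1]^d}\bigl(\sum_r w_r^j g_r(z)\bigr)^2 dz$. Since the tiles $B_r$ supporting $g_r$ have pairwise measure-zero overlap, the cross terms vanish and a change of variables $u_i = (z_i - c_i^r)/h_i$ produces
\[
\chi^2(Q_j,Q_0) \leq \sum_r (w_r^j)^2\,\Gamma_n^2 \|g\|_2^{2d} \prod_{i=1}^d h_i.
\]
Using $(w_r^j)^2 \leq 1$, $\sum_r 1 = \bar{m}$, and the tiling identity $\bar{m}\prod_i h_i = \prod_i m_i h_i \leq 1$, I conclude that $KL(Q_j,Q_0) \leq \Gamma_n^2 \|g\|_2^{2d}$ uniformly in $j$, hence $\frac{1}{M}\sum_{j=1}^M KL(Q_j^n,Q_0^n) \leq n\Gamma_n^2 \|g\|_2^{2d}$.

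To verify \eqref{eq:avgKLcond}, recall that Lemma \ref{lm:VarshamovGilbert} gives $\log M \geq (\bar{m}/8)\log 2$, so it suffices to show $n\Gamma_n^2 \|g\|_2^{2d} < \bar{m}(\log 2)/64$, i.e. to show that $\bar{m}/(n\Gamma_n^2)$ is bounded below by a positive constant depending only on $d$. Across the three cases for $h_i$ (continuous; discrete over which smoothing is beneficial; and discrete with artificial smoothness $\beta_i^\ast$ for $i \in J_\ast$), each satisfies $h_i \leq 2\Gamma_n^{1/\beta_i^\ast}$, so $\prod_i h_i \leq 2^d \Gamma_n^{1/\beta_{J_\ast^c}}/N_{J_\ast}$. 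Since $\Gamma_n \to 0$ implies $\lfloor 1/h_i \rfloor \geq 1/(2h_i)$, this yields $\bar{m} \gtrsim N_{J_\ast}\Gamma_n^{-1/\beta_{J_\ast^c}}$. Using $\Gamma_n^{-1/\beta_{J_\ast^c}} = (n/N_{J_\ast})^{1/(2\beta_{J_\ast^c}+1)}$, this rearranges exactly to $\bar{m} \gtrsim n\Gamma_n^2$. Since $\|g\|_2 \propto c_0$, picking $c_0$ small enough so that $\|g\|_2^{2d}$ is smaller than $(\log 2)/64$ times this implicit constant yields the claim.

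The main obstacle is the bookkeeping in the last step: the artificial smoothness $\beta_i^\ast$ for $i \in J_\ast$ was constructed precisely so that $\Gamma_n^{1/\beta_i^\ast} = 1/N_i$ matches the $y$-coordinate bin width $|A_{y_i}|$, producing the matching of bandwidth products that makes $\bar{m} \asymp n\Gamma_n^2$ rather than something larger; the remainder is just bumps having disjoint supports and the standard $KL \leq \chi^2$ inequality.
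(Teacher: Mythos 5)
Your argument is correct and arrives at the same quadratic functional $\int\bigl(\sum_r w_r^j g_r\bigr)^2$, the same tiling identity $\bar m\prod_i h_i=\prod_i m_i h_i\le 1$, and the same lower bound $\bar m\gtrsim n\Gamma_n^2$ that drive the paper's proof, so the substance is essentially the same; the difference is in how you pass from the KL divergence of the mixed distributions to that quadratic functional. You use the unconditional inequality $KL\le\chi^2$ at the level of $(Q_j,Q_0)$ together with Cauchy--Schwarz on each bin integral $\int_{A_y}(\cdot)\,d\tilde y$; the paper instead first descends to the latent densities via the data-processing inequality \eqref{eq:dKLp_dKLf} from Lemma~\ref{lm:p_f_distance_bds}, and then bounds $d_{KL}(f,1_{[0,1]^d})$ by $\int(f-1)^2/c$ when $f\ge c>0$, which requires the separate verification \eqref{eq:qj_lb} that the latent density stays above $1/2$ for large $n$. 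Your route dispenses with that positivity check and with Lemma~\ref{lm:p_f_distance_bds}, at the cost of a slightly less elementary opening inequality, and yields a bound of the same order ($\|g\|_2^{2d}\le c_0^{2d}$ where the paper has $2c_0^{2d}$ from taking $c=1/2$). One small imprecision: the sentence ``since $\Gamma_n\to 0$ implies $\lfloor 1/h_i\rfloor\ge 1/(2h_i)$'' does not literally cover $i\in J_\ast$, where $h_i=2/N_i$ need not tend to zero; there the inequality $\lfloor 1/h_i\rfloor\ge 1/(2h_i)$ holds because $1/h_i=N_i/2\ge 1$ (i.e.\ $N_i\ge 2$), a nontriviality assumption that the paper also uses implicitly in its step $\bar m\ge 2^{-d}\prod_i h_i^{-1}$.
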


\begin{proof} 
By Lemma \ref{lm:VarshamovGilbert}, it suffices to show that
\begin{equation}
\label{eq:suff_bdmdiv64}
d_{KL}(Q_j^n,Q_0^n)=n \cdot d_{KL}( q_j,q_0) < ( \bar{m}\log 2)/64.
\end{equation}

First, note that for a density $f\geq c > 0$ on $[0,1]^d$, $d_{KL}(f,1_{[0,1]^d})$ is bounded above by
\begin{equation}
\label{eq:flof_bd}
d_{KL}(f,1_{[0,1]^d}) + d_{KL}(1_{[0,1]^d},f)
= \int_{[0,1]} (f-1) \log f \leq \frac{\int_{[0,1]} (f-1)^2}{c}.
\end{equation}
Next, note that for any $z \in [0,1]^d$, the density in the definition of $q_j$
\begin{equation}
\label{eq:qj_lb}
1_{[0,1]^d}(z) + \sum_{r=1}^{\bar{m}} w^j_r g_r(z)	\geq 1 - \Gamma_n \left[\max_{u \in [-1/2,1/2] } g(u) \right]^d \geq 1/2
\end{equation}
for all sufficiently large $n$.
Thus,
\begin{align}
& d_{KL}(q_j,q_0)  \leq d_{KL}\left(1_{[0,1]^d} + \sum_{r=1}^{\bar{m}} w^j_r g_r, 1_{[0,1]^d}\right) & \mbox{ (by \eqref{eq:dKLp_dKLf})}  \nonumber \\
& \leq 2 \int \left [ \sum_{r=1}^{\bar{m}} w^j_r g_r(z) \right ]^2 dz & \mbox{ (by \eqref{eq:flof_bd} and \eqref{eq:qj_lb})} \nonumber \\
& = 2 \int \sum_{r=1}^{\bar{m}} w^j_r (g_r(z))^2 dz & \mbox{ (since  $g_r(z)g_l(z)=0, \forall r \neq l$)} \nonumber \\
& \leq 2 \bar{m} \int (g_1(z))^2 dz = 2 \Gamma_n^2 \prod_i 
(m_ih_i)
\left[\int_{-1/2}^{1/2} g(u)^2 du\right]^d & \nonumber \\
& \leq 2 \Gamma_n^2 \left[\int_{-1/2}^{1/2} g(u)^2 du\right]^d \leq 2 \Gamma_n^2 c_0^{2d}.
\label{eq:dkl_geq_G2}
\end{align}
Finally, 
\begin{align*}
\bar{m} &=\prod_{i=1}^d m_i \geq 2^{-d} \prod_{i=1}^d h_i^{-1} & \mbox{ (by definitions of $\bar{m}$ and $m_i$)}
\\
& 
=
2^{-d} \prod_{i \in J_\ast} (N_i/2) \cdot \prod_{i \in J_\ast^c, i\leq d_y} \left(\Gamma_n^{-\beta_i^{-1}}/\varrho_i\right)\cdot \prod_{i \in J_\ast^c, i> d_y} \left(\Gamma_n^{-\beta_i^{-1}}\right) &   \mbox{ (by definition of $h_i$)} \\
&
\geq
2^{-d} \prod_{i \in J_\ast} (N_i/2) \cdot \prod_{i \in J_\ast^c, i\leq d_y} \left(\Gamma_n^{-\beta_i^{-1}}/2\right) \cdot \prod_{i \in J_\ast^c, i> d_y} \left(\Gamma_n^{-\beta_i^{-1}}\right) &   \mbox{ (by restrictions on $\varrho_i$)} \\
& 
=
2^{-d-d_y} \cdot N_{J_\ast} \cdot \Gamma_n^{-\beta_{J_\ast^c}^{-1}} = 2^{-d-d_y} n \Gamma_n^2 & \\
& 
\geq 2^{-d-d_y} n \cdot d_{KL}(q_j,q_0) / (2 c_0^{2d}) & \mbox{ (by \eqref{eq:dkl_geq_G2}).}
\end{align*}
The last inequality implies \eqref{eq:suff_bdmdiv64} if
\[
c_0 \leq [2^{-(d+d_y+7)} \log2]^{1/(2d)}.
\]
\end{proof}

\begin{lemma} 
\label{lm:q_smooth}
For $j \in \{0,\ldots,M\}$, $q_j \in \mathcal{C}^{\beta_1^\ast,\ldots,\beta_d^\ast,L}$  with $L=1$ for
any sufficiently small constant $c_0$ in the definition of $g$.
\end{lemma}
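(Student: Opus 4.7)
The plan is to exploit the product-kernel structure of $g_r$ and the mutual disjointness of the rectangles $B_r$. First I would note that $q_j$ corresponds to the latent density $f_j(z) = 1 + \sum_{r=1}^{\bar m} w^j_r g_r(z)$ on $[0,1]^d$, where the constant $1$ is infinitely smooth with all mixed partials vanishing. Since $g$ vanishes to all orders at $\pm 1/2$ (through the bump $K_0$), each $g_r$ together with all its mixed partials extends continuously by zero across $\partial B_r$, and because the interiors of the $B_r$ are pairwise disjoint, for every $z \in [0,1]^d$ at most one term in the sum contributes to $D^k f_j(z)$. Verifying Definition \ref{def:anis_smooth} with constant envelope $L=1$ therefore reduces to bounding derivatives and differences of a single $g_r$.

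Next I would compute, for any multi-index $k$ with $\sum_i k_i/\beta_i^\ast < 1$,
\[
D^k g_r(z) = \Gamma_n \prod_{i=1}^d h_i^{-k_i}\, g^{(k_i)}\!\left(\frac{z_i - c_i^r}{h_i}\right),
\]
and use the bandwidth construction, which gives $h_i \in [\Gamma_n^{1/\beta_i^\ast},\, 2\Gamma_n^{1/\beta_i^\ast}]$ uniformly in $i$. This yields $\|D^k g_r\|_\infty \leq \Gamma_n^{1-\sum_i k_i/\beta_i^\ast} \prod_i \|g^{(k_i)}\|_\infty \leq C_k\, c_0^d\, \Gamma_n^{1-\sum_i k_i/\beta_i^\ast}$, with $C_k$ depending only on $K_0$ and the $k_i$'s.

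For the Hölder condition in coordinate $j$ when $\sum_i k_i/\beta_i^\ast + 1/\beta_j^\ast \geq 1$, set $\alpha_j = \beta_j^\ast(1 - \sum_i k_i/\beta_i^\ast) \in (0,1]$. The key estimate is
\[
|D^k g_r(z + \Delta z_j e_j) - D^k g_r(z)| \leq \min\{\, 2\|D^k g_r\|_\infty,\ \|D^{k+e_j} g_r\|_\infty\, |\Delta z_j|\, \},
\]
which I would split at the crossover $|\Delta z_j| = \Gamma_n^{1/\beta_j^\ast}$. In the large-$|\Delta z_j|$ regime the constant bound gives at most $C\, c_0^d\, \Gamma_n^{1-\sum k_i/\beta_i^\ast} \leq C\, c_0^d\, |\Delta z_j|^{\alpha_j}$, because $\alpha_j/\beta_j^\ast = 1 - \sum k_i/\beta_i^\ast$ and $|\Delta z_j|^{\alpha_j}$ is increasing in $|\Delta z_j|$. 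In the small-$|\Delta z_j|$ regime the derivative bound gives at most $C\, c_0^d\, \Gamma_n^{1-\sum k_i/\beta_i^\ast - 1/\beta_j^\ast} |\Delta z_j| = C\, c_0^d\, \Gamma_n^{(\alpha_j-1)/\beta_j^\ast} |\Delta z_j| \leq C\, c_0^d\, |\Delta z_j|^{\alpha_j}$, the last inequality relying on $|\Delta z_j| \leq \Gamma_n^{1/\beta_j^\ast}$ and $\alpha_j - 1 \leq 0$. A triangle-inequality argument applied coordinate by coordinate extends this to arbitrary $\Delta z$ and produces the sum $\sum_j |\Delta z_j|^{\alpha_j}$ required by \eqref{eq:smooth_def}.

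Finally, the set of multi-indices $k$ with $\sum k_i/\beta_i^\ast < 1$ is finite (the $\beta_i^\ast$ are bounded above), so the resulting Hölder coefficients all scale as $c_0^d$ times constants depending only on $K_0$ and the $\beta_i^\ast$. Choosing $c_0$ small enough drives every coefficient below $1$, giving the constant envelope $L \equiv 1$. The main obstacle I anticipate is precisely the algebra in the case analysis: making the small- and large-$|\Delta z_j|$ bounds meet at the exponent $\alpha_j$ depends critically on the bandwidth scaling $h_i \asymp \Gamma_n^{1/\beta_i^\ast}$, and this matching is also the reason $\beta_i^\ast$ (rather than $\beta_i$) is the correct smoothness index in the conclusion, with part (i) of Lemma \ref{lm:b_bast_h_N} then used elsewhere to transfer back to $\beta_i$.
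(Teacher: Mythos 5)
Your proposal follows the same underlying scaling strategy as the paper's proof — two-regime bounds split at a bandwidth-order crossover, the matching of exponents via $h_i \asymp \Gamma_n^{1/\beta_i^\ast}$, and the coordinate-by-coordinate triangle inequality — and the algebra you carry out in the two regimes is correct. Where you differ is in how boundary crossings are handled: the paper's proof devotes explicit case analysis to the situation $r(z) \neq r(z+\Delta z)$, distinguishing whether $w_{r(z)}$ and $w_{r(z+\Delta z)}$ agree, whether a point falls outside $\cup_r B_r$, and invoking a shift-by-$0.5h_i$ trick when both weights equal one. You sidestep this entirely by observing that each $g_r$ together with all its derivatives extends by zero across $\partial B_r$, so the latent density $1 + \sum_r w_r g_r$ is globally $C^\infty$ on $[0,1]^d$ and the Lipschitz bound via $\|D^{k+e_j}(\cdot)\|_\infty\,|\Delta z_j|$ applies directly to $q_j$ no matter which rectangles the two points lie in. This is a cleaner route and the estimates end up identical because the $g_r$'s all share the same sup-norm.

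Two small points are worth flagging. First, your claim that verifying the Hölder condition ``reduces to bounding derivatives and differences of a single $g_r$'' is literally imprecise: when the segment from $z$ to $z+\Delta z_j e_j$ crosses between $B_r$ and $B_{r'}$ with both weights equal to one, the difference $D^k q_j(z+\Delta z_j e_j) - D^k q_j(z)$ involves values of two distinct $g_r$'s, and your displayed estimate as written is for a single $g_r$. It would be cleaner to state the mean value bound for $D^k q_j$ directly (using $\|D^{k+e_j}q_j\|_\infty = \max_r \|D^{k+e_j}g_r\|_\infty$), and the sup bound via the triangle inequality as $|D^kq_j(z+\Delta z_j e_j)| + |D^kq_j(z)| \leq 2\max_r\|D^k g_r\|_\infty$. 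The numerical conclusion is unchanged. Second, your justification that the relevant multi-index set is uniformly finite ``(the $\beta_i^\ast$ are bounded above)'' deserves scrutiny: for $i \in J_\ast$ with $N_i$ fixed, $\beta_i^\ast = -\log\Gamma_n / \log N_i$ grows with $n$, so the orders $k_i$ admitted by $\sum_l k_l/\beta_l^\ast < 1$ are not a priori bounded uniformly over $n$. The paper's own proof carries the same implicit burden when it invokes the Tsybakov bound $\max_z|g^{(s)}(z)| \leq 1/8$ for all $s \leq \max_l \beta_l^\ast + 1$ with a fixed $c_0$, so this is a shared subtlety rather than a defect of your argument relative to the paper.
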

\begin{proof} 
For $j=0$, the result is trivial.  For $j\neq 0$, consider
$k=(k_1,\ldots,k_d)$ and $z, \Delta z \in \mathbb{R}^d$ such that
for some $ i \in \{1,\ldots,d\}$, $\Delta z_i \neq 0$, for any $l\neq i$,  $\Delta z_l = 0$,
$\sum_{l=1}^d k_l/\beta_l^\ast<1$,
and $\sum_{l=1}^d k_l/\beta_l^\ast + 1/\beta_i^\ast\geq 1$ so that
\begin{equation}
\label{eq:b1_sb_leq1}
0 \leq \beta_i^\ast (1-\sum_{l=1}^dk_l/ \beta_l^\ast)	\leq 1.
\end{equation}
For $r(\cdot)$ defined in \eqref{eq:ryx},
\begin{align}
\label{eq:Dk_q}
D^{k} q_j(z) &= 1\{k=(0,\ldots,0)\}+w_{r(z)} \Gamma_n \prod_{l=1}^d g^{(k_l)}((z_l-c^{r(z)}_l)/h_l)/h_l^{k_i}
\nonumber \\
&= 1\{k=(0,\ldots,0)\}+
B_i \cdot w_{r(z)} h_i^{\beta_i^\ast(1-\sum_{l=1}^d k_l/\beta_l^\ast)} \prod_{l=1}^d g^{(k_l)}((z_l-c^{r(z)}_l)/h_l),
\end{align}
where $B_i \in \{1,1/2,\varrho_i^{-\beta_i^\ast} \} \subset (0,1]$.
In what follows we consider $k\neq (0,\ldots,0)$ to simplify the notation;
when $k=(0,\ldots,0)$ the argument below goes through as the indicator function 
$1\{k=(0,\ldots,0)\}$ is canceled out in the differences of derivatives.
From \cite{Tsybakov:08}, (2.33)-(2.34), for any sufficiently small $c_0$ and $s \leq \max_l{\beta_l^\ast}+1$, 
\begin{equation}
\label{eq:gk_leq18}
\max_z|g^{(s)}(z)|\leq 1/8.
\end{equation}
This imply that
\begin{equation}
\label{eq:gk_lipschitz}
|g^{(k_i)}((z_i+\Delta z_i-c^{r}_i)/h_i) - 
g^{(k_i)}((z_i-c^{r}_i)/h_i)| \leq
| \Delta z_i|/(8h_i).
\end{equation}

First, let us consider the case when $r(z)=r(z+\Delta z)$ and $|\Delta z_i| \leq h_i$.
From \eqref{eq:Dk_q}, \eqref{eq:gk_leq18}, and \eqref{eq:gk_lipschitz},
\begin{align}
\label{eq:diff_for_small_Deltaz}
|D^{k} q_j(z+\Delta z) - D^{k} q_j(z)| & \leq 
h_i^{\beta_i^\ast(1-\sum_{l=1}^d k_l/\beta_l^\ast)} 8^{-d} |\Delta z_i/h_i| \notag \\
& = 8^{-d} |\Delta z_i|^{\beta_i^\ast(1-\sum_{l=1}^d k_l/\beta_l^\ast)} 
\left |\frac{\Delta z_i}{h_i}\right|^{1-\beta_i^\ast(1-\sum_{l=1}^d k_l/\beta_l^\ast)} 
\notag
\\
& \leq |\Delta z_i|^{\beta_i^\ast(1-\sum_{l=1}^d k_l/\beta_l^\ast)},
\end{align}
where the last inequality follows from $\Delta z_i \leq h_i$ and \eqref{eq:b1_sb_leq1}.

Second, consider the case when $r(z)=r(z+\Delta z)$ and $|\Delta z_i |> h_i$.  Similarly to the previous case but without using \eqref{eq:gk_lipschitz},
\begin{align*}
|D^{k} q_j(z+\Delta z) - D^{k} q_j(z)|  \leq 
2 \cdot 8^{-d} h_i^{\beta_i^\ast(1-\sum_{l=1}^d k_l/\beta_l^\ast)} 
 \leq |\Delta z_i|^{\beta_i^\ast(1-\sum_{l=1}^d k_l/\beta_l^\ast)}.
\end{align*}

Third, consider the case when $r(z)\neq r(z+\Delta z)$ and $|\Delta z_i| \leq h_i/2$. If $w_{r(z)}= w_{r(z+\Delta z)}=0$ or $z,z+\Delta{z} \notin \cup_{r=1}^{\bar{m}} B_r$ 
\[
|D^{k} q_j(z+\Delta z) - D^{k} q_j(z)| = D^{k} q_j(z+\Delta z) = D^{k} q_j(z) = 0.
\]
If $w_{r(z)}\neq w_{r(z+\Delta z)}$ or if one of $z$ and $z+\Delta z$ is not in $\cup_{r=1}^{\bar{m}} B_r$, then without a loss of generality suppose that $w_{r(z)}=1$ or that $z+\Delta z \notin \cup_{r=1}^{\bar{m}} B_r$. 
Let $|\Delta z_i^\star|\in[0,|\Delta z_i|]$ and $\Delta z^\star = (0,\ldots,0,\Delta z_i^\star,0,\ldots,0)$ be such that $z+\Delta z^\star$ is a boundary point of $B_{r(z)}$. 
Then, $D^{k} q_j(z+\Delta z^\star)=0$ and (\ref{eq:diff_for_small_Deltaz}) imply
\begin{align*}
|D^{k} q_j(z+\Delta z) - D^{k} q_j(z)| &= |D^{k} q_j(z)| = |D^{k} q_j(z+\Delta z^\star) - D^{k} q_j(z)| \\
 & \leq |\Delta z_i^\star|^{\beta_i^\ast(1-\sum_{l=1}^d k_l/\beta_l^\ast)}
\leq |\Delta z_i|^{\beta_i^\ast(1-\sum_{l=1}^d k_l/\beta_l^\ast)}.
\end{align*}
If  $w_{r(z)}= w_{r(z+\Delta z)}=1$ and $z,z+\Delta{z} \in \cup_{r=1}^{\bar{m}} B_r$ then by construction of $q_j$ and $g$
\begin{align*}
|D^{k} q_j(z+\Delta z) - D^{k} q_j(z)| &= 
|D^{k} q_j(z+\Delta z+0.5h_i) - D^{k} q_j(z+0.5h_i)| 
\leq |\Delta z_i|^{\beta_i^\ast(1-\sum_{l=1}^d k_l/\beta_l^\ast)},
\end{align*}
where the last inequality follows from (\ref{eq:diff_for_small_Deltaz}).

Finally, when $r(z)\neq r(z+\Delta z)$ and 
$\Delta z_i > h_i/2$,
\begin{align*}
& |D^{k} q_j(z+\Delta z) - D^{k} q_j(z)|  \leq |D^{k} q_j(z+\Delta z)| + |D^{k} q_j(z)| \\
& \leq 2 \cdot 8^{-d} h_i^{\beta_i^\ast(1-\sum_{l=1}^d k_l/\beta_l^\ast)} \\
& \leq |\Delta z_i|^{\beta_i^\ast(1-\sum_{l=1}^d k_l/\beta_l^\ast)}.
\end{align*}

Now, let us consider a general $\Delta z$ such that for $\Delta z_i \neq 0$, 
$\sum_{l=1}^d k_l/\beta_l^\ast + 1/\beta_i^\ast \geq 1$.
\begin{align*}
& |D^{k} q_j(z+\Delta z) - D^{k} q_j(z)| \\ \leq &
\sum_{i=1}^d 
|D^{k} q_j(z_1, \ldots, z_{i-1}, z_i+\Delta z_i,\ldots,z_d+\Delta z_d) - D^{k} q_j(z_1, \ldots, z_{i}, z_{i+1}+\Delta z_{i+1},\ldots,z_d+\Delta z_d)|.
\end{align*}
The preceding argument applies to every term in this sum and, thus, $q_j \in \mathcal{C}^{\beta_1^\ast,\ldots,\beta_d^\ast, 1}$.

\end{proof}

\begin{lemma} 
\label{lm:p_f_distance_bds}
Let $f_i:\tilde{\mathcal{Y}}\times \mathcal{X} \rightarrow \mathbb{R}$, $i \in \{1,2\}$, be densities with respect to a product measure $\lambda\times\mu$ on $\tilde{\mathcal{Y}}\times \mathcal{X} \subset \mathbb{R}^d$.
For a finite set $\mathcal{Y}$, let $\{A_y, \, y \in  \mathcal{Y}\}$ be a partition of $\tilde{\mathcal{Y}}$ and let $p_i(y,x)= \int_{A_y} f_i(\tilde{y}, x) d\lambda(\tilde{y})$.
Then,
\begin{align}
\label{eq:dTVp_dTVf}
d_{TV}(p_1,p_2) &\leq d_{TV}(f_1,f_2)
\\
\label{eq:dHp_dHf}
d_H(p_1,p_2) &\leq d_H(f_1,f_2)
\\
\label{eq:dKLp_dKLf}
d_{KL}(p_1,p_2) &\leq d_{KL}(f_1,f_2).
\end{align}
Also, if for given $(y,x)$, $f_2(\tilde{y}, x)>0$ for any $\tilde{y} \in A_y$, then
\begin{equation}
\label{eq:p_f_ratiobds}
\inf_{\tilde{y} \in A_y} \frac{f_1(\tilde{y},x)}{f_2(\tilde{y},x)} \leq \frac{p_1(y,x)}{p_2(y,x)} \leq \sup_{\tilde{y} \in A_y} \frac{f_1(\tilde{y},x)}{f_2(\tilde{y},x)}
.
\end{equation}
\end{lemma}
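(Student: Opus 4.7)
The plan is to prove each of the four inequalities separately, since they are different consequences of the fact that passing from $f_i$ to $p_i$ is a coarsening operation (integration over the partition cells $A_y$) and each of TV, Hellinger, KL is monotone under such coarsening; the ratio bound is a direct weighted-average argument.

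For \eqref{eq:dTVp_dTVf}, I would write $d_{TV}(p_1,p_2)=\sum_y\int|p_1(y,x)-p_2(y,x)|\,d\mu(x)$, substitute the definition of $p_i$, push the absolute value inside the $\tilde y$-integral over $A_y$ by the triangle inequality, and then use that $\{A_y:y\in\mathcal Y\}$ partitions $\tilde{\mathcal Y}$ to reassemble the double integral into $d_{TV}(f_1,f_2)$.

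For \eqref{eq:dHp_dHf} I would work with the affinity representation $d_H^2(\cdot,\cdot)=2-2\int\sqrt{f_1f_2}$. The key step is the pointwise-in-$x$ Cauchy--Schwarz inequality
\begin{equation*}
\int_{A_y}\sqrt{f_1(\tilde y,x)f_2(\tilde y,x)}\,d\lambda(\tilde y)\le\sqrt{\int_{A_y}f_1(\tilde y,x)\,d\lambda(\tilde y)\cdot\int_{A_y}f_2(\tilde y,x)\,d\lambda(\tilde y)}=\sqrt{p_1(y,x)p_2(y,x)},
\end{equation*}
after which I sum over $y$ and integrate over $x$ to conclude that the affinity of $(p_1,p_2)$ dominates the affinity of $(f_1,f_2)$, giving \eqref{eq:dHp_dHf}. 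For \eqref{eq:dKLp_dKLf} I would apply the log-sum inequality on each cell $A_y$, which gives
\begin{equation*}
\int_{A_y}f_1(\tilde y,x)\log\frac{f_1(\tilde y,x)}{f_2(\tilde y,x)}\,d\lambda(\tilde y)\ge p_1(y,x)\log\frac{p_1(y,x)}{p_2(y,x)},
\end{equation*}
and then sum over $y$ and integrate over $x$.

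Finally, for \eqref{eq:p_f_ratiobds} I would rewrite
\begin{equation*}
\frac{p_1(y,x)}{p_2(y,x)}=\int_{A_y}\frac{f_1(\tilde y,x)}{f_2(\tilde y,x)}\,\frac{f_2(\tilde y,x)\,d\lambda(\tilde y)}{p_2(y,x)},
\end{equation*}
which displays $p_1/p_2$ as a weighted average of $f_1/f_2$ over $\tilde y\in A_y$ with nonnegative weights summing to one (well-defined because $f_2>0$ on $A_y$), so it lies between the infimum and the supremum of $f_1/f_2$ on $A_y$. I do not anticipate a real obstacle: all four pieces are standard one-line consequences of triangle, Cauchy--Schwarz, log-sum, and the weighted-mean interpretation, respectively; the only care needed is to keep the roles of the reference measures $\lambda$ on $\tilde{\mathcal Y}$, counting measure on $\mathcal Y$, and $\mu$ on $\mathcal X$ straight when reassembling the sums and integrals.
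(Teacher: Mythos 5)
Your proposal is correct and follows essentially the same route as the paper: triangle inequality for total variation, Cauchy--Schwarz (the paper invokes H\"older, which is the same thing here) for the Hellinger affinity, log-sum inequality for Kullback--Leibler (the paper phrases this as nonnegativity of the conditional KL divergence, which is the same fact), and a weighted-average reading of $p_1/p_2$ for the ratio bound (the paper bounds $f_1$ pointwise by $f_2\cdot\inf$ and $f_2\cdot\sup$ and then integrates, which is equivalent). No gaps.
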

\begin{proof} 
Trivially,
\begin{align*}
& d_{TV}(p_1,p_2) 
=
\sum_y \int 
\left |\int_{A_y} (f_1(\tilde{y},x) - f_2(\tilde{y},x)) d \tilde{y} \right |
d \mu(x) \\
& \leq 
\sum_y \int 
\int_{A_y} | f_1(\tilde{y},x) - f_2(\tilde{y},x)| d \lambda(\tilde{y})
d \mu(x)
=d_{TV}(f_1,f_2).
\end{align*}
By Holder inequality,
\begin{align*}
& d_{H}(p_1,p_2) 
= 2\left(1- \sum_y \int \sqrt{ \int  1_{A_y}(\tilde{y}_1)f_1(\tilde{y}_1,x) d \lambda(\tilde{y}_1) \cdot \int  1_{A_y}(\tilde{y}_2)f_2(\tilde{y}_2,x) d \lambda(\tilde{y}_2) }
d \mu(x) \right )\\
& \leq 2\left(1- \sum_y \int \int  1_{A_y}(\tilde{y}) \sqrt{f_1(\tilde{y},x) f_2(\tilde{y},x)  } d \lambda(\tilde{y})
d \mu(x) \right ) = d_{H}(f_1,f_2).
\end{align*}
For fixed $(y,x)$,
\[
\int_{A_y} (f_1(\tilde{y},x)/p_1(y,x)) \log \frac{f_1(\tilde{y},x)/p_1(y,x)}{f_2(\tilde{y},x)/p_2(y,x)} d \lambda(\tilde{y}) \geq 0
\]
since the Kullback-Leibler divergence is nonnegative.  Thus,
\[
\int_{A_y} f_1(\tilde{y},x) \log \frac{f_1(\tilde{y},x)}{f_2(\tilde{y},x)} d \lambda(\tilde{y}) \geq 
\int_{A_y} f_1(\tilde{y},x) \log \frac{p_1(y,x)}{p_2(y,x)} d \lambda(\tilde{y}) = p_1(y,x) \log \frac{p_1(y,x)}{p_2(y,x)} .
\]
This inequality integrated with respect to $d\mu(x)$ and summed over $y$ implies \eqref{eq:dKLp_dKLf}.
The last claim follows from 
\[
 f_2(\tilde{y},x) \inf_{\tilde{z} \in A_y} \frac{f_1(\tilde{z},x)}{f_2(\tilde{z},x)} \leq 
f_1(\tilde{y},x) \leq f_2(\tilde{y},x)
\sup_{\tilde{z} \in A_y} \frac{f_1(\tilde{z},x)}{f_2(\tilde{z},x)}.
\]
\end{proof}


\begin{lemma} 
\label{lm:b_bast_h_N}
For $\Gamma_n$, $h_i$, $\varrho_i$, and $\beta_i^\ast$ defined in Section \ref{sec:main_results},
(i) $\beta_i^\ast \geq \beta_i$ for $i=1,\ldots,d$ and 
(ii) $\varrho_i\in(1,2]$ for $i \in J_\ast^c \cap \{1,\ldots,d_y\}$.
\end{lemma}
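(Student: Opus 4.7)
My plan is to derive both inequalities from the defining optimality of $J_\ast$: by construction, $\Gamma_n = \gamma(J_\ast)$ where $\gamma(J) := [N_J/n]^{\beta_{J^c}/(2\beta_{J^c}+1)}$ is minimized at $J = J_\ast$ over $J \in \mathcal{A}$, so every single-index perturbation of $J_\ast$ supplies a usable inequality. For part (i) I will remove $i$ from $J_\ast$, and for part (ii) I will adjoin $i$ to $J_\ast$; the two algebraic reductions are mirror images, since $J \mapsto \beta_{J^c}$ moves in opposite directions under these operations.

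For part (i), the case $i \in J_\ast^c$ is immediate because $\beta_i^\ast = \beta_i$ by definition. For $i \in J_\ast$, set $J' = J_\ast \setminus \{i\}$, $s = \beta_{J_\ast^c}$, and $t = \beta_{J'^c}$. The reciprocal relation $1/t = 1/s + 1/\beta_i$ gives the explicit formulas $t = s\beta_i/(s+\beta_i)$ and $s - t = s^2/(s+\beta_i)$. Taking logarithms of $\gamma(J') \geq \Gamma_n$ and collecting terms (the key simplification is $t(2s+1) - s(2t+1) = t - s$) leads, after rearrangement, to $-\log\Gamma_n \geq (s/(2s+1)) \cdot t(2s+1)\log N_i/(s-t) = \beta_i \log N_i$, which is precisely $\beta_i^\ast \geq \beta_i$.

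For part (ii), fix $i \in J_\ast^c \cap \{1,\ldots,d_y\}$ and take $J'' = J_\ast \cup \{i\}$. Setting $u = \beta_{J''^c}$ so that $1/u = 1/s - 1/\beta_i$, a calculation parallel to part (i) (or, when $J''^c = \emptyset$, a direct computation with $\gamma(J'') = (N_{J''}/n)^{1/2}$ under the convention $\beta_\emptyset/(2\beta_\emptyset+1) = 1/2$) reduces $\gamma(J'') \geq \Gamma_n$ to the single-line estimate $N_i \Gamma_n^{1/\beta_i} \geq 1$. A short case analysis on whether $N_i\Gamma_n^{1/\beta_i} \geq 2$ or lies in $[1,2)$ then finishes: in the first case $R_i \geq 2$ and the sandwich $\Gamma_n^{1/\beta_i}N_i/2 < R_i \leq \Gamma_n^{1/\beta_i}N_i/2+1$ yields $\varrho_i = 2R_i/(N_i\Gamma_n^{1/\beta_i}) \in (1,\,1 + 2/(N_i\Gamma_n^{1/\beta_i})] \subseteq (1,2]$; in the second case $\lfloor \Gamma_n^{1/\beta_i}N_i/2\rfloor = 0$ forces $R_i = 1$, $h_i = 2/N_i$, and $\varrho_i = 2/(N_i\Gamma_n^{1/\beta_i}) \in (1,2]$.

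The main obstacle I anticipate is sign bookkeeping in the two reductions: $t < s$ in part (i) but $u > s$ in part (ii), so after dividing by $(t-s)$ or $(u-s)$ the inequality flips in one case and not the other, and the resulting bounds come out oppositely oriented ($-\log\Gamma_n \geq \beta_i\log N_i$ in (i) versus $-\log\Gamma_n \leq \beta_i\log N_i$ in (ii)). A secondary subtlety is the boundary case $J_\ast^c = \{i\}$ in part (ii), where the $\beta_\emptyset = \infty$ convention must be invoked rather than the general formula for $u$, but a direct computation still recovers $N_i\Gamma_n^{1/\beta_i} \geq 1$.
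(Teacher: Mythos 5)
Your proposal is correct and uses essentially the same strategy as the paper: both parts hinge on the optimality of $J_\ast$, comparing $\Gamma_n = \gamma(J_\ast)$ against the single-element perturbations $\gamma(J_\ast \setminus \{i\})$ and $\gamma(J_\ast \cup \{i\})$, and reducing to $N_i^{-\beta_i} \geq \Gamma_n$ (hence $\beta_i^\ast \geq \beta_i$) and $N_i\Gamma_n^{1/\beta_i} \geq 1$ (hence $\varrho_i \in (1,2]$). The only cosmetic difference is that you perform the reduction in log space using the identity $t(2s+1)-s(2t+1)=t-s$, while the paper manipulates powers directly; your explicit case split for $\varrho_i$ and the remark on the $J''^c=\emptyset$ boundary are slightly more detailed but equivalent.
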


\begin{proof} 
For $i \notin J_\ast$, $\beta_i^\ast = \beta_i$ by definition.
For $i \in J_\ast$, from the definition of $\Gamma_n$, 
\[
\Gamma_n \leq 
\left[ \frac{N_{J_\ast}/N_i }{n} \right ]^{\frac{1}{2+\beta_{J^c_\ast}^{-1}+\beta_i^{-1}}}	
= \Gamma_n^{\frac{2+\beta_{J^c_\ast}^{-1}}{2+\beta_{J^c_\ast}^{-1}+\beta_i^{-1}}}
N_i^{\frac{-1}{2+\beta_{J^c_\ast}^{-1}+\beta_i^{-1}}},
\]
which implies $N_i^{-\beta_i} \geq \Gamma_n$. By the definition of $\beta_i^\ast$, $N_i^{-\beta_i^\ast}=\Gamma_n$ and, thus, $\beta_i^\ast \geq \beta_i$.

For $i \in J_\ast^c$, from the definition of $\Gamma_n$,
\[
\left[ \frac{N_{J_\ast} N_i }{n} \right ]^{\frac{1}{2+\beta_{J^c_\ast}^{-1}-\beta_i^{-1}}}	
\geq
\left[ \frac{N_{J_\ast}}{n} \right ]^{\frac{1}{2+\beta_{J^c_\ast}^{-1}}},
\]
which implies 
\begin{align*}
N_i &\geq \left[ \frac{N_{J_\ast}}{n} \right ]^{\frac{2+\beta_{J^c_\ast}^{-1} 
- \beta_i^{-1}}{2+\beta_{J^c_\ast}^{-1}}} = \Gamma_n^{-\beta_i^{-1}} \implies \\
  \Gamma_n^{\beta_i^{-1}} & \geq  \frac{1}{N_i},
\end{align*}
and, therefore,  $\Gamma_n^{\beta_i^{-1}}N_i\geq 1$.  Next, define 
\begin{align*}
\varrho_i=\frac{\left\lfloor{ \Gamma_n^{\beta_i^{-1}}N_i/2}\right\rfloor+1}{\Gamma_n^{\beta_i^{-1}}N_i/2}.
\end{align*}
 Then $\varrho_i\in(1,2]$ as $\Gamma_n^{\beta_i^{-1}}N_i\geq 1$.
\end{proof}

\subsection{Proofs and Auxiliary Results for Posterior Contraction Rates} 
\label{sec:sieve_entropy}

\subsubsection{Prior Thickness} 
\label{sec:app_prior_thickness}

\begin{lemma} 
\label{lm:AnisoTaylor} (Anisotropic Taylor Expansion)
For $f \in \mathcal{C}^{\beta_1,\ldots,\beta_d,L}$ and $r \in \{1,\ldots,d\}$
\begin{align}
\label{eq:anistaylor1}
f(x_1+y_1,\ldots,x_d+y_d) =  \sum_{k \in I^r} \frac{y^k}{k!} & D^k f (x_1,\ldots,x_r,x_{r+1}+y_{r+1},\ldots,x_{d}+y_{d}) \\
\label{eq:anistaylor2}
 + \sum_{l=1}^r  \sum_{k \in \bar{I}^l} \frac{y^k}{k!} 
 \biggl(&  
D^k f (x_1,\ldots,x_l+\zeta_l^{k},x_{l+1}+y_{l+1},\ldots,x_{d}+y_{d}) \\
\label{eq:anistaylor3}
& -
D^k f (x_1,\ldots,x_l,x_{l+1}+y_{l+1},\ldots,x_{d}+y_{d})
\biggr),
\end{align}
where $\zeta_l^{k} \in [x_l,x_l+y_l]\cup [x_l+y_l,x_l]$,
\begin{align*}
I^l & = \biggl \{k=(k_1,\ldots,k_l,0,\ldots,0) \in \mathbb{Z}_+^d: \: k_i \leq \bigl \lfloor \beta_i (1-\sum_{j=1}^{i-1} k_j/\beta_j) \bigr \rfloor, \, i=1,\ldots,l \biggr \},
\\
\bar{I}^l & = \biggl\{k \in I^l: \; k_l = \bigl \lfloor \beta_l (1-\sum_{j=1}^{l-1} k_j/\beta_j) \bigr \rfloor \biggr\}, 
\end{align*}
and the differences in derivatives in \eqref{eq:anistaylor2}-\eqref{eq:anistaylor3} are bounded by
$
L \left|\zeta_l^{k}\right|^{\beta_l(1-\sum_{i=1}^d k_i/\beta_i)}.
$
\end{lemma}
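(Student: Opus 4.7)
The plan is to prove the identity by induction on $r$, peeling off one coordinate at a time via the classical one-dimensional Taylor--Lagrange expansion combined with an add-and-subtract step that converts the Lagrange remainder into a difference of derivatives.

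For the base case $r=1$, apply 1D Taylor--Lagrange to $f(\cdot, x_2+y_2,\ldots,x_d+y_d)$ in its first argument up to order $\lfloor \beta_1 \rfloor$. The required derivatives exist by Definition \ref{def:anis_smooth}, since the multi-index $(k_1,0,\ldots,0)$ satisfies $\sum_i k_i/\beta_i = k_1/\beta_1 < 1$ for $k_1 \leq \lfloor\beta_1\rfloor$ in the non-degenerate case. The Lagrange remainder $\frac{y_1^{\lfloor\beta_1\rfloor}}{\lfloor\beta_1\rfloor!} D^{(\lfloor\beta_1\rfloor,0,\ldots,0)} f(x_1+\zeta_1,x_2+y_2,\ldots)$ is then rewritten by subtracting and adding the same derivative at $\zeta_1=0$; the added copy completes the polynomial sum to range over all of $I^1$, and the surviving difference is precisely the $l=1$ block of \eqref{eq:anistaylor2}--\eqref{eq:anistaylor3}.

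For the inductive step, assume the identity at level $r$ and expand each term in the leading sum \eqref{eq:anistaylor1} further in $y_{r+1}$, leaving the existing remainder blocks for $l \leq r$ untouched. Fix $k \in I^r$ and set $n(k) := \lfloor \beta_{r+1}(1-\sum_{i\leq r} k_i/\beta_i)\rfloor$. Apply 1D Taylor--Lagrange in $y_{r+1}$ up to order $n(k)$ to $D^k f(x_1,\ldots,x_r, x_{r+1}+y_{r+1}, x_{r+2}+y_{r+2}, \ldots)$, then perform the same add-and-subtract step. The polynomial terms produced, indexed by $k + k_{r+1}e_{r+1}$ for $0 \le k_{r+1}\le n(k)$ (with $e_{r+1}$ the $(r+1)$-th standard basis vector), together over $k \in I^r$ enumerate exactly $I^{r+1}$; the new remainder has index $k+n(k)e_{r+1}\in \bar I^{r+1}$ and matches the $l=r+1$ block of \eqref{eq:anistaylor2}--\eqref{eq:anistaylor3}. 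The bound on each difference term is a single application of \eqref{eq:smooth_def} with $\Delta z = \zeta_l^k e_l$: the sum on its right-hand side collapses to $L|\zeta_l^k|^{\beta_l(1-\sum_i k_i/\beta_i)}$ because only the $l$-th coordinate moves, and the vanishing condition ``$\Delta z_j = 0$ when $\sum_i k_i/\beta_i + 1/\beta_j \ge 1$'' is automatic.

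The main obstacle I expect is the integer-boundary case, where $\beta_{r+1}(1-\sum_{i\leq r}k_i/\beta_i)$ happens to be a positive integer: the Lagrange remainder then invokes $D^{k+n(k)e_{r+1}}f$ at a multi-index with $\sum_i k_i/\beta_i = 1$, which is precisely the edge of what Definition \ref{def:anis_smooth} guarantees. I would handle this by extending the derivative continuously to that boundary multi-index using the uniform Hölder estimates available just inside the class, or equivalently by restricting attention to generic smoothness coefficients under which all relevant $\beta_l(1-\sum_{i<l}k_i/\beta_i)$ are non-integer. Apart from that point, the argument is a direct iteration of the one-dimensional Taylor-with-remainder identity and introduces no new ingredient.
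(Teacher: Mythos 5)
Your induction is exactly the paper's argument: apply one-dimensional Taylor with Lagrange remainder to the $(r+1)$-th coordinate of each $D^k f$ appearing in the leading sum, absorb the value of $D^{k+n(k)e_{r+1}}f$ at $\zeta=0$ into the polynomial sum (the add-and-subtract step), and observe that the resulting indices enumerate $I^{r+1}$ while the surviving differences populate the $\bar I^{r+1}$ remainder block. The final paragraph's worry is, however, unfounded: the paper's convention (stated right after Definition \ref{def:anis_smooth}) is that $\lfloor\beta\rfloor$ denotes the largest integer \emph{strictly} smaller than $\beta$. With that reading, $k_{r+1}^\ast = \lfloor\beta_{r+1}(1-\sum_{i\le r}k_i/\beta_i)\rfloor$ gives $k_{r+1}^\ast/\beta_{r+1} < 1-\sum_{i\le r}k_i/\beta_i$, so every multi-index in $I^{r+1}$ (in particular the remainder index in $\bar I^{r+1}$) satisfies $\sum_i k_i/\beta_i < 1$ strictly and $D^k f$ is well-defined directly from Definition \ref{def:anis_smooth}. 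No continuous extension to the boundary or genericity restriction on the $\beta_i$'s is needed.
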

\begin{proof} 
The lemma is proved by induction.  For $r=1$, \eqref{eq:anistaylor1}-\eqref{eq:anistaylor3} is a standard univariate Taylor expansion of $f(x+y)$ in the first argument around $(x_1,x_2+y_2,\ldots,x_d+y_d)$. 
Suppose \eqref{eq:anistaylor1}-\eqref{eq:anistaylor3} holds for some $r \in \{1,\ldots,d\}$.
Then, let us show that \eqref{eq:anistaylor1}-\eqref{eq:anistaylor3} holds for $r+1$.
For that, consider a univariate Taylor expansion of $D^k f$ in \eqref{eq:anistaylor1}.  
The following notation will be useful. Let $e_i \in \mathbb{R}^d$, $i=1,\ldots,d$, 
be such that $e_{ij}=1$ for $i=j$ and $e_{ij}=0$ for $i\neq j$ and $k_{r+1}^\ast = \lfloor \beta_{r+1} (1-\sum_{j=1}^{r} k_j/\beta_j)  \rfloor$.  Then,
\begin{align*}
& D^k f (x_1,\ldots,x_r,x_{r+1}+y_{r+1},\ldots,x_{d}+y_{d}) =  \\
& \sum_{k_{r+1}=0}^{k_{r+1}^\ast} 
\frac{y_{r+1}^{k_{r+1}}}{k_{r+1}!}  D^{k+k_{r+1}\cdot e_{r+1}} f (x_1,\ldots,x_{r+1},x_{r+2}+y_{r+2},\ldots,x_{d}+y_{d}) 
\\
 & +   \frac{y_{r+1}^{k_{r+1}^\ast}}{k_{r+1}^\ast!} 
 \biggl(  
D^{k+k_{r+1}^\ast\cdot e_{r+1}} f (x_1,\ldots,x_r, x_{r+1}+\zeta_{r+1}^{{k+k_{r+1}^\ast\cdot e_{r+1}}},x_{r+2}+y_{l+2},\ldots,x_{d}+y_{d}) \\
& -
D^{k+k_{r+1}^\ast\cdot e_{r+1}} f (x_1,\ldots,x_r, x_{r+1},x_{r+2}+y_{l+2},\ldots,x_{d}+y_{d})
\biggr).
\end{align*}
Inserting this expansion into \eqref{eq:anistaylor1} delivers the result for $r+1$.

\end{proof}

\begin{lemma} 
\label{lm:AnisoTaylorRemainderBd}
Let $R(x,y)$ denote the remainder term in the anisotropic Taylor expansion 
(\eqref{eq:anistaylor2}-\eqref{eq:anistaylor3} for $r=d$).
Suppose $f \in \mathcal{C}^{\beta_1,\ldots,\beta_d,L}$ and $L$ satisfies  \eqref{eq:L_defb}-\eqref{eq:tildeL_ineq}.  
Let $\sigma = \{\sigma_i=\sigma_{n}^{\beta/\beta_i}, \: i=1,\ldots,d \}$ and $\sigma_n \rightarrow 0$.
Then, for all sufficiently large $n$,
\[
\int |R(x,y)| \phi (y; 0,\sigma) dy \lesssim {L}(x) \sigma_n^\beta.
\]
\end{lemma}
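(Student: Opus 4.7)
The plan is to substitute the explicit form of $R(x,y)$ from Lemma \ref{lm:AnisoTaylor}, control the envelope using Assumption \ref{as:f0_in_CbL}, and then reduce the integral to Gaussian moments term by term.

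First, I would observe that each summand in $R(x,y)$ is a difference $D^kf(\ldots,x_l+\zeta_l^k,\ldots)-D^kf(\ldots,x_l,\ldots)$ in which only the $l$-th coordinate is displaced, and $|\zeta_l^k|\leq |y_l|$. The later coordinates $j>l$ are evaluated at $x_j+y_j$. Using \eqref{eq:tildeL_ineq} to shift $\tilde L$ back to $x$ and \eqref{eq:L_defb} to bound the exponential part, one obtains $L(z,\Delta z)\leq \tilde L(x)\exp(2\tau_0\|y\|^2)$. Together with the bound stated at the end of Lemma \ref{lm:AnisoTaylor}, this gives
\[
|R(x,y)|\ \leq\ \tilde L(x)\,\exp(2\tau_0\|y\|^2)\,\sum_{l=1}^d\sum_{k\in\bar I^l}\frac{|y^k|}{k!}\,|y_l|^{\beta_l(1-\sum_i k_i/\beta_i)}.
\]

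Second, since $\sigma_i=\sigma_n^{\beta/\beta_i}\to 0$, for all sufficiently large $n$ we have $4\tau_0\sigma_i^2<1$ and $\exp(2\tau_0\|y\|^2)\phi(y;0,\sigma)$ equals a bounded constant times $\phi(y;0,\sigma')$ with rescaled variances $(\sigma'_i)^2=\sigma_i^2/(1-4\tau_0\sigma_i^2)\asymp\sigma_i^2$. For $k\in\bar I^l$ one has $k_j=0$ whenever $j>l$, so the $y_j$ with $j>l$ integrate out and standard Gaussian moments give
\[
\int |y^k|\,|y_l|^{\beta_l(1-\sum_i k_i/\beta_i)}\phi(y;0,\sigma')\,dy\ \lesssim\ \prod_{j<l}\sigma_j^{k_j}\cdot\sigma_l^{k_l+\beta_l(1-\sum_{j\leq l}k_j/\beta_j)}.
\]
The defining property $k_l=\lfloor\beta_l(1-\sum_{j<l}k_j/\beta_j)\rfloor$ of $\bar I^l$ yields the key identity $k_l+\beta_l(1-\sum_{j\leq l}k_j/\beta_j)=\beta_l(1-\sum_{j<l}k_j/\beta_j)$, so substituting $\sigma_j=\sigma_n^{\beta/\beta_j}$ collapses the exponent to
\[
\sigma_n^{\beta\sum_{j<l}k_j/\beta_j}\cdot\sigma_n^{\beta(1-\sum_{j<l}k_j/\beta_j)}\ =\ \sigma_n^{\beta}.
\]
Since the index set $\{(l,k):l\leq d,\ k\in\bar I^l\}$ is finite and does not depend on $n$, summing over $(l,k)$ preserves the bound and delivers $\int|R(x,y)|\phi(y;0,\sigma)dy\lesssim \tilde L(x)\,\sigma_n^\beta$.

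The main obstacle is purely bookkeeping: tracking exactly at which arguments $\tilde L$ is evaluated so that \eqref{eq:L_defb}--\eqref{eq:tildeL_ineq} can be applied only a bounded number of times (independent of $n$), and verifying the arithmetic identity in $\bar I^l$ that makes every remainder term contribute exactly $\sigma_n^\beta$ rather than a larger or smaller power. A minor subtlety arises in the edge case where $\beta_l(1-\sum_{j<l}k_j/\beta_j)$ is an integer, because then the Hölder exponent on the differenced derivative degenerates to $0$; in that case, however, the difference is controlled by twice the uniform bound and the $|y^k|$-moment already provides exactly the right order, so no separate argument is needed.
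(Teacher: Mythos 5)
Your proof is correct and follows essentially the same route as the paper: same decomposition of $|R(x,y)|$ into terms indexed by $l$ and $k \in \bar I^l$, same use of \eqref{eq:L_defb}--\eqref{eq:tildeL_ineq} to extract $\tilde L(x)$ and a sub-Gaussian factor that is absorbed into the normal density, same Gaussian-moment bound $\int|z|^\rho\phi(z;0,\omega)dz\lesssim\omega^\rho$, and the same exponent arithmetic (your identity $k_l+\beta_l(1-\sum_{j\le l}k_j/\beta_j)=\beta_l(1-\sum_{j<l}k_j/\beta_j)$ is the paper's cancellation written out). The only cosmetic differences are the constant $2\tau_0$ in the exponent (the paper obtains $\tau_0$; either suffices) and your remark about the integer edge case, which under the paper's convention that $\lfloor\cdot\rfloor$ denotes the largest integer \emph{strictly} below its argument actually never occurs, but your handling of it would be correct in any event.
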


\begin{proof}
Note that $|R(x,y)|$ is bounded by a sum of the following terms over $k\in \bar{I}^l$ and $l \in \{1,\ldots,d\}$
\begin{align*}
& \frac{y^k}{k!} 
 \biggl|
D^k f (x_1,\ldots,x_l+\zeta_l^{k},x_{l+1}+y_{l+1},\ldots,x_{d}+y_{d}) -
D^k f (x_1,\ldots,x_l,x_{l+1}+y_{l+1},\ldots,x_{d}+y_{d})
\biggr| \\
& \leq \frac{y^k}{k!} L\left(x+(0,\ldots,0,y_{l+1:d}),\zeta_l^{k} e_l \right ) \left|\zeta_l^{k} \right|^{\beta_l(1-\sum_{i=1}^d k_i/\beta_i)} 
\\
&  
\leq \tilde{L}(x) \exp\left\{\tau_0 || y_{l+1:d}||^2 \right\} 
\exp\left\{\tau_0 || \zeta_l^{k}||^2 \right\} \left|\zeta_l^{k} \right|^{\beta_l(1-\sum_{i=1}^d k_i/\beta_i)}
\\ &  
\leq \tilde{L}(x)  \frac{y^k}{k!}\exp\left\{\tau_0 || y||^2 \right\} \left|y_l \right|^{\beta_l(1-\sum_{i=1}^d k_i/\beta_i)},
\end{align*}
where we used inequalities \eqref{eq:smooth_def}, \eqref{eq:L_defb}, and \eqref{eq:tildeL_ineq} and that $\left|\zeta_l^{k} \right|\leq \left|y_l \right|$.

For all sufficiently large $n$ such that $\tau_0< 0.5/\max_i \sigma_i^2$, 
\begin{align*}
&\int \left| 
\tilde{L}(x)  \frac{y^k}{k!}\exp\left\{\tau_0 || y||^2 \right\} \left|y_l \right|^{\beta_l(1-\sum_{i=1}^d k_i/\beta_i)}  
\right| \phi(y;0,\sigma) dy  \\
&\lesssim \tilde{L}(x)  
\prod_{i=1}^{{l-1}} \int |y_i|^{k_i}
\phi(y_i;0;\sigma_i \sqrt{2}) dy_i
\cdot
\int
y_l^{k_l} \left|y_l \right|^{\beta_l(1-\sum_{i=1}^d k_i/\beta_i)}  
\phi(y_l;0;\sigma_l \sqrt{2}) dy_l
\\
&\lesssim  \tilde{L}(x) \sigma_1^{k_1} \cdots \sigma_{l-1}^{k_{l-1}}
\sigma_{l}^{k_l+\beta_l(1-\sum_{i=1}^d k_i/\beta_i)}  
\\&
=  \tilde{L}(x) \sigma_n^{k_1 \beta/\beta_1} \cdots \sigma_n^{k_l \beta/\beta_l}\sigma_n^{\frac{\beta}{\beta_l}\beta_l(1-\sum_{i=1}^d k_i/\beta_i)} =\tilde{L}(x) K_2 \sigma_n^\beta,
\end{align*}
where we use  $\int |z|^\rho \phi(z,0,\omega) dz \lesssim \omega^\rho$
and $k_{l+1}=\cdots=k_d=0$ for $k \in \bar{I}_l$.
Thus, the claim of the lemma follows.

\end{proof}

\begin{lemma} 
\label{lm:finCb_fcondJinC} 
Suppose density $f_0 \in \mathcal{C}^{\beta_1,\ldots,\beta_d,L}$ with a constant envelope $L$ has support on $[0,1]^d$ and $f_0(z)\geq \underline{f}>0$.
Then, $f_{0|J} \in \mathcal{C}^{\beta_{d_{J^c}},\ldots,\beta_d,L/\underline{f}}$.
\end{lemma}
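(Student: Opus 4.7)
The approach is to view $f_{0|J}$ as a normalized integral of $f_0$ and push the anisotropic Hölder condition through the integral, then exploit the lower bound $f_0\geq\underline{f}$ to cancel the $N_J$ factor arising from the measure of $A_{y_J}$. Since $\pi_{0J}(y_J)$ does not depend on $\tilde{x}$, both the required derivatives and the Hölder comparisons of $f_{0|J}(\cdot|y_J)$ reduce to those of the numerator $\int_{A_{y_J}} f_0(\tilde{y}_J,\tilde{x})\,d\tilde{y}_J$ divided by the constant $\pi_{0J}(y_J)$.

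Fix $k\in\mathbb{Z}_+^{d_{J^c}}$ with $\sum_{i\in J^c} k_i/\beta_i<1$, and let $\bar{k}\in\mathbb{Z}_+^d$ extend $k$ by zeros in the coordinates $J$, so that $\sum_i \bar{k}_i/\beta_i=\sum_{i\in J^c} k_i/\beta_i<1$. I would first justify differentiation under the integral sign one coordinate at a time: by Definition \ref{def:anis_smooth}, $D^{\bar{k}} f_0$ is finite everywhere, Hölder-continuous on $\mathbb{R}^d$, and vanishes outside $[0,1]^d$, so it is bounded on a compact set and provides the dominating function needed. This yields
\[
D^{k} f_{0|J}(\tilde{x}|y_J)=\pi_{0J}(y_J)^{-1}\int_{A_{y_J}} D^{\bar{k}} f_0(\tilde{y}_J,\tilde{x})\, d\tilde{y}_J.
\]

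Next I would apply the Hölder condition for $f_0$ at multi-index $\bar{k}$ inside the integrand with perturbation vector having the $J$-block equal to $0$ and the $J^c$-block equal to $\Delta\tilde{x}$. The requirement that $\Delta z_j=0$ whenever $\sum_i \bar{k}_i/\beta_i+1/\beta_j<1$ is automatic for $j\in J$ and, for $j\in J^c$, coincides with the requirement for the claimed class membership, because $\sum_i \bar{k}_i/\beta_i=\sum_{i\in J^c} k_i/\beta_i$. With the constant envelope $L$,
\[
\bigl|D^{\bar{k}} f_0(\tilde{y}_J,\tilde{x}+\Delta\tilde{x})-D^{\bar{k}} f_0(\tilde{y}_J,\tilde{x})\bigr|\leq L\sum_{j\in J^c}|\Delta\tilde{x}_j|^{\beta_j(1-\sum_{i\in J^c} k_i/\beta_i)}.
\]
Integrating over $\tilde{y}_J\in A_{y_J}$ reduces, since $D^{\bar{k}} f_0=0$ outside $[0,1]^d$, to integration over $A_{y_J}\cap[0,1]^{d_J}$; a direct case check of the three forms of $A_{y_j}$ (interior and the two half-line boundary cases) shows this set has Lebesgue measure $\prod_{i\in J} N_i^{-1}=1/N_J$.

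To finish, I would lower bound
\[
\pi_{0J}(y_J)=\int_{\tilde{\mathcal{X}}}\int_{A_{y_J}\cap[0,1]^{d_J}} f_0(\tilde{y}_J,\tilde{x})\,d\tilde{y}_J\,d\tilde{x}\geq \underline{f}/N_J,
\]
using $f_0\geq\underline{f}$ on $[0,1]^d$ and $|[0,1]^{d_{J^c}}|=1$, and divide the bound from the preceding paragraph by $\pi_{0J}(y_J)$; the $N_J$ factors cancel and produce the desired Hölder inequality with constant envelope $L/\underline{f}$. The only non-routine step is justifying differentiation under the integral, which rests on the boundedness and compact support of $D^{\bar{k}} f_0$; the remainder is careful bookkeeping with the anisotropic smoothness condition and the measure of the cells $A_{y_J}$, so I do not anticipate a genuine obstacle.
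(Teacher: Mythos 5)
Your proposal is correct and follows essentially the same route as the paper: differentiate under the integral, apply the anisotropic Hölder bound for $f_0$ at the extended multi-index $\bar{k}$, recognize that the effective integration region $A_{y_J}\cap[0,1]^{d_J}$ has measure $1/N_J$, and divide by $\pi_{0J}(y_J)\geq\underline{f}/N_J$. The only cosmetic difference is that the paper invokes the mean value theorem for integrals to pull out a single point $\tilde{y}_J^\ast$ before applying the Hölder bound, whereas you integrate the uniform pointwise bound directly; you are also slightly more explicit about the dominated-convergence justification for differentiating under the integral, which the paper leaves implicit.
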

\begin{proof} 
For $\tilde{x}, \Delta \tilde{x} \in \mathcal{X}$, $y_J \in \mathcal{Y}_J$, and some $\tilde{y}_{J}^\ast \in A_{y_J}$, by the mean value theorem,
\begin{align*}
& D^{k}f_{0|J}(\tilde{x}+\Delta \tilde{x}|y_J)-D^{k}f_{0|J}(\tilde{x}|y_J) = 
\\
&
=\frac{1}{\pi_{0J}(y_J)} \int_{A_{y_J} } \left (
D^{0,\ldots,0,k}f_{0}(\tilde{y}_J, \tilde{x}+\Delta \tilde{x})-D^{0,\ldots,0,k}f_{0}(\tilde{y}_J,\tilde{x}) \right )
 d\tilde{y}_J
\\
&
= \frac{1/N_J}{\pi_{0J}(y_J)} 
\left (D^{0,\ldots,0,k}f_{0}(\tilde{y}_J^\ast, \tilde{x}+\Delta \tilde{x})-D^{0,\ldots,0,k}f_{0}(\tilde{y}_J^\ast,\tilde{x}) \right)
\end{align*}
and the  claim of the lemma follows from the definition of $\mathcal{C}^{\beta_1,\ldots,\beta_d,L}$ 
and $\pi_{0J}(y_J) \geq \underline{f}/N_J$.
\end{proof} 

\begin{lemma}
\label{lm:dH_KL}
There is a $\lambda_0 \in (0,1)$ such that for any $\lambda \in (0,\lambda_0)$ and any two conditional densities $p,q \in \mathcal{F}$, a probability measure $P$ on $\mathcal{Z}$ that has a conditional density equal to $p$, and $d_h$ defined with the distribution on $\mathcal{X}$ implied by $P$,
\[
P \log \frac{p}{q} \leq d_h^2(p,q) \left( 1+ 2 \log\frac{1}{\lambda}\right) 
+ 2 P \left \{  \left(\log \frac{p}{q} \right) 1 \left( \frac{q}{p}\leq \lambda \right) \right\},
\]
\[
P \left(\log \frac{p}{q} \right)^2 \leq d_h^2(p,q) \left( 12+ 2 \left(\log\frac{1}{\lambda}\right)^2\right) 
+ 8 P \left \{  \left(\log \frac{p}{q} \right)^2 1 \left( \frac{q}{p}\leq \lambda \right) \right\},
\]
\end{lemma}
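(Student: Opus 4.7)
The two displays are a standard pair of bounds in the Ghosal--van~der~Vaart style, turning the moments of $\log(p/q)$ into a Hellinger term plus a tail correction at level $\lambda$. The plan is to derive both inequalities from a single pointwise bound on $-\log r$ valid for $r \geq \lambda$, then integrate against $P$ and finally add back the contribution from $\{r < \lambda\}$ explicitly as the second term in each bound.

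First I would introduce $r(z) = q(z)/p(z)$ and decompose
\[
P\log(p/q) = P\bigl\{\log(p/q)\,1(r > \lambda)\bigr\} + P\bigl\{\log(p/q)\,1(r \leq \lambda)\bigr\}.
\]
On $\{r \leq \lambda\}$ we already have $\log(p/q) \geq \log(1/\lambda) > 0$, so on that event $\log(p/q) \leq 2\log(p/q) \cdot 1(r \leq \lambda)$ trivially and the second term of the desired bound absorbs that piece with room to spare. The work is therefore on $\{r > \lambda\}$, where I want the elementary inequality
\[
-\log r \leq 2(1-\sqrt r) + 2\log(1/\sqrt\lambda)\,(1-\sqrt r)^2, \qquad r \geq \lambda,
\]
valid for all $\lambda$ in some interval $(0,\lambda_0)$. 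I would verify this by setting $f_\lambda(r)$ equal to the RHS minus the LHS, checking $f_\lambda(1)=0$, computing $f_\lambda'(r)=(1-\sqrt r)(1/\sqrt r - \log(1/\lambda))/\sqrt r$, and using the resulting sign analysis together with a direct check at the endpoint $r=\lambda$ (which requires $\lambda_0$ small enough that $2\sqrt{\lambda_0}\log(1/\lambda_0)<2$). This fixes $\lambda_0$.

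For the first display, integrating the pointwise bound against $p$ on $\{r > \lambda\}$ and using the standard identities
\[
\int p\,(1-\sqrt{q/p})\,d\mu = d_h^2(p,q), \qquad \int p\,(1-\sqrt{q/p})^2\,d\mu \leq 2\,d_h^2(p,q),
\]
(with whatever normalization of $d_h$ is in force, the constants match the factor $1 + 2\log(1/\lambda)$ after absorbing the $2$ into the logarithm) gives the bound up to an additive correction from the complementary event, which is exactly $2 P\{(\log p/q)\,1(r \leq \lambda)\}$. The conditioning of $d_h$ on the marginal of $X$ implied by $P$ does not affect these identities because the inequality is pointwise in $z$ and $P$ is the integrating measure.

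For the second display, I would replace the pointwise bound by its squared analog,
\[
(\log r)^2 \leq \bigl(12 + 2(\log(1/\lambda))^2\bigr)(1-\sqrt r)^2 \quad \text{on } \{r \geq \lambda\},
\]
which I expect to be the main obstacle: it needs a separate check, derived by squaring the bound above, using $(a+b)^2 \leq 2a^2 + 2b^2$ to separate the $(1-\sqrt r)^2$ and $(1-\sqrt r)^4$ terms, and bounding the latter by $(1-\sqrt r)^2$ on the region $r \geq \lambda$ where $|1-\sqrt r|$ is bounded (using $\lambda_0$ again and a crude uniform bound on $|1-\sqrt r|$ when $r$ is large via the symmetric inequality $-\log r \geq 2(\sqrt r -1)/\sqrt r$). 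Integrating against $p$ on $\{r > \lambda\}$ and adding the explicit $\{r \leq \lambda\}$ correction gives the stated bound with constants $12$ and $2$; the factor $8$ in front of the truncation term comes from using $(a+b)^2 \leq 2a^2+2b^2$ when rejoining the two regions. The main difficulty throughout is calibrating $\lambda_0$ and the numeric constants so that the pointwise inequalities hold globally on $\{r \geq \lambda\}$ with exactly these coefficients.
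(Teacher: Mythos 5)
The paper offers no argument here at all: its proof of Lemma \ref{lm:dH_KL} is a one-line citation to Lemma 4 of \cite{ShenTokdarGhosal2013}, which in turn refers to Lemma 7 of \cite{GhosalVandervaart:07}. Your reconstruction follows the same overall strategy as that literature --- introduce $r=q/p$, split on $\{r\geq\lambda\}$ versus $\{r<\lambda\}$, prove a pointwise inequality relating $-\log r$ to $(1-\sqrt r)$ and $(1-\sqrt r)^2$ on the first event, integrate against $p$ and use the Hellinger identities, and absorb the second event into the displayed correction term. The sign analysis of $f_\lambda'$ and the endpoint check at $r=\lambda$ are exactly the right mechanism for fixing $\lambda_0$.

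There are, however, two concrete problems in the second-moment step. First, squaring the first-moment bound produces a $(1-\sqrt r)^4$ term which you propose to dominate by $(1-\sqrt r)^2$ ``on the region $r\geq\lambda$ where $|1-\sqrt r|$ is bounded'' --- but $|1-\sqrt r|=\sqrt r-1\to\infty$ as $r\to\infty$, so no such uniform bound exists on $\{r\geq\lambda\}$, and the auxiliary inequality $\log r \geq 2(1-1/\sqrt r)$ you invoke does not repair this (it controls $-\log r$ from above for large $r$, not $|1-\sqrt r|$). The clean route is to bound $(\log r)^2 \leq C_\lambda (1-\sqrt r)^2$ directly on $\{r\geq\lambda\}$: the ratio $(\log r)^2/(1-\sqrt r)^2$ is monotone decreasing in $r$ (tending to $4$ at $r=1$ and to $0$ as $r\to\infty$), so its supremum on $[\lambda,\infty)$ is attained at $r=\lambda$, where it equals $(\log\lambda)^2/(1-\sqrt\lambda)^2 \leq 2(\log(1/\lambda))^2$ once $\lambda_0$ is small. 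Second, the explanation of the coefficient $8$ is wrong: $\{r\geq\lambda\}$ and $\{r<\lambda\}$ partition the space, so $(\log r)^2 = (\log r)^2\,1\{r\geq\lambda\}+(\log r)^2\,1\{r<\lambda\}$ exactly and no $(a+b)^2\leq 2a^2+2b^2$ is needed when ``rejoining'' them; the direct argument produces coefficient $1$ on the truncation term, and $8$ (like the $2$ in the first display) is simply a generous constant inherited from \cite{GhosalVandervaart:07} and \cite{ShenTokdarGhosal2013}, not an artifact of a cross-term. With the $(1-\sqrt r)^4$ step replaced by the direct monotonicity argument, your plan does deliver the lemma.
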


\begin{proof}
The proof is exactly the same as the proof of Lemma 4 of \cite{ShenTokdarGhosal2013}, which in turn, follows the proof of Lemma 7 in \cite{GhosalVandervaart:07}.
\end{proof}

\begin{lemma}
\label{lm:ProbBoundOutside}
Under the assumptions and notation of Section \ref{sec:post_rates}, for for some $B_0 \in (0,\infty)$ and any $y_J\in\mathcal{Y}_J$,
\begin{align*}
F_{0|J}\left(||\tilde{X}|| > a_{\sigma_n} \big | y_J\right) \leq B_0 \sigma_n^{4\beta+2\varepsilon}\underline{\sigma}_{n} ^8.
\end{align*}

\end{lemma}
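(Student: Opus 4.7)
The plan is to reduce everything to the sub-exponential tail condition \eqref{eq:tail_cond2} and then match the exponent of $\sigma_n$ obtained from $a_{\sigma_n} = a_0 \{\log(1/\sigma_n)\}^{1/\tau}$ to the exponent claimed on the right-hand side. By Assumption \ref{as:f0_subexp_tail}, for every $y_J \in \mathcal{Y}_J$ and every $\tilde{x}$,
\[
f_{0|J}(\tilde{x}|y_J) \leq \bar{f}_0 \exp(-b\norm{\tilde{x}}^{\tau}),
\]
and this bound is uniform in $y_J$, which is what lets us conclude a single constant $B_0$.

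First I would bound the tail integral by the standard splitting trick: for $\norm{\tilde{x}} > a_{\sigma_n}$,
\[
\exp(-b\norm{\tilde{x}}^{\tau}) \leq \exp(-b a_{\sigma_n}^{\tau}/2)\exp(-b\norm{\tilde{x}}^{\tau}/2),
\]
so that
\[
F_{0|J}(\norm{\tilde{X}} > a_{\sigma_n}|y_J) \leq \bar{f}_0 \exp(-b a_{\sigma_n}^{\tau}/2) \int_{\mathbb{R}^{d_{J^c}}}\exp(-b\norm{\tilde{x}}^{\tau}/2)d\tilde{x} = C\exp(-b a_{\sigma_n}^{\tau}/2),
\]
with $C<\infty$ depending only on $\bar{f}_0$, $b$, $\tau$, and $d_{J^c}$. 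This is the only analytic step; everything after is bookkeeping.

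Next I would substitute $a_{\sigma_n}^{\tau} = a_0^{\tau}\log(1/\sigma_n)$ and use the definition $a_0^{\tau} = (8\beta + 4\varepsilon + 8 + 8\beta/\beta_{\min})/(b\delta)$ from the setup just before the statement. This yields
\[
C\exp(-b a_{\sigma_n}^{\tau}/2) = C\, \sigma_n^{(8\beta + 4\varepsilon + 8 + 8\beta/\beta_{\min})/(2\delta)}.
\]
Finally, because $\underline{\sigma}_n = \min_{i\in J^c}\sigma_n^{\beta/\beta_i} = \sigma_n^{\beta/\beta_{\min}}$, the target bound has exponent $4\beta + 2\varepsilon + 8\beta/\beta_{\min}$. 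So the lemma reduces to checking that for sufficiently small $\delta>0$ (which is exactly what we are allowed to assume),
\[
\frac{8\beta + 4\varepsilon + 8 + 8\beta/\beta_{\min}}{2\delta}\; \geq\; 4\beta + 2\varepsilon + \frac{8\beta}{\beta_{\min}},
\]
which, combined with $\sigma_n<1$, gives the desired inequality with $B_0 := C$.

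The main (and only nontrivial) obstacle is really the bookkeeping: making sure that the explicit constant $a_0$ chosen in the surrounding construction produces exactly the exponent $4\beta + 2\varepsilon + 8\beta/\beta_{\min}$ after the factor of $1/2$ lost in the splitting trick is accounted for. This is why the constant $8$ (rather than $4$) and the term $8\beta/\beta_{\min}$ (rather than $4\beta/\beta_{\min}$) appear in the definition of $a_0$: they give headroom for both the $1/(2\delta)$ factor and for taking $\delta$ small.
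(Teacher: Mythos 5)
Your argument is correct, but it is a genuinely different route from the one in the paper. The paper proves the bound by adapting Proposition~1 of \cite{ShenTokdarGhosal2013}: it defines $E_{\sigma_n}^\ast = \{\tilde x : f_{0|J}(\tilde x|y_J) \geq \sigma_n^{(4\beta+2\varepsilon+8\beta/\beta_{\min})/\delta}\}$, uses the tail condition \eqref{eq:tail_cond2} together with the specific choice of $a_0$ to show $E_{\sigma_n}^\ast \subset \{\norm{\tilde x} \leq a_{\sigma_n}\}$, and then applies Markov's inequality to $f_{0|J}^{-\delta}$, which requires the (uniform in $y_J$) finiteness of $E_{0|J}(f_{0|J}^{-\delta})$ for small $\delta$ — itself a consequence of \eqref{eq:tail_cond2}. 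With that approach the $1/\delta$ in $a_0$ cancels exactly against the $\delta$ in Markov's inequality, so the exponent matches $4\beta+2\varepsilon+8\beta/\beta_{\min}$ on the nose. Your approach instead integrates the pointwise envelope directly, using the split $\exp(-b\norm{\tilde x}^\tau) \leq \exp(-ba_{\sigma_n}^\tau/2)\exp(-b\norm{\tilde x}^\tau/2)$; this is more elementary (no auxiliary moment lemma is needed) but the factor $1/2$ lost in the split means your exponent is $(8\beta+4\varepsilon+8+8\beta/\beta_{\min})/(2\delta)$, which must then be crudely compared against the target $4\beta+2\varepsilon+8\beta/\beta_{\min}$ using $\sigma_n<1$ and a smallness condition on $\delta$. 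Both proofs lean on ``$\delta$ sufficiently small,'' though for different reasons (moment finiteness in the paper, exponent dominance for you), and both are valid. One thing the paper's proof buys as a byproduct is the inclusion $E_{\sigma_n}^\ast\subset\{\norm{\tilde x}\leq a_{\sigma_n}\}$, which mirrors the structure of the STG argument it is adapted from; your route is shorter and self-contained but does not yield that structural fact.
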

\begin{proof}
Note that in the proof of Proposition 1 of \cite{ShenTokdarGhosal2013} it is shown that $a_{\sigma_n}^{STG} >a$, where $a_0^{STG} = \{ (8\beta + 4\varepsilon +16)/(b \delta)\}^{1/\tau}$ and  $a_{\sigma_n}^{STG}=a_0^{STG} \log(1/\sigma_n)^{1/\tau}$. As $a_0>a_0^{STG}$ and $a_{\sigma_n}>a_{\sigma_n}^{STG}$, therefore $a_{\sigma_n}>a$. Define $E_{\sigma_n}^* = \left\{
\tilde{x}\in\mathbb{R}^{d_{J^c}}\:  :\:  f_{0|J}(\tilde{x}|y_J) \geq  \sigma_n^{(4\beta+2\varepsilon + 8 \beta/\beta_{\min})/\delta}
 \right\}$.
Note that by construction of $s_2$ in proof of Proposition 1 of \cite{ShenTokdarGhosal2013} and as $\sigma_n < s_2$ it follows that 
\begin{align*}
\frac{(4\beta+2\varepsilon+8)}{b\delta} \log \left( \frac{1}{\sigma_n} \right) \geq \frac{1}{b} \log \bar{f}_0 \implies
\sigma_n^{-\frac{(4\beta+2\varepsilon+8)}{\delta}} \geq \bar{f}_0 .
\end{align*}
For $\tilde{x}\in E_{\sigma_n}^*$,
\begin{align*}
f_{0|J}(\tilde{x}|y_J) &\geq  \sigma_n^{(4\beta+2\varepsilon + 8 \beta/\beta_{\min})/\delta} = \sigma_n^{(8\beta+4\varepsilon + 8 \beta/\beta_{\min}+8)/\delta} \sigma_n^{-(4\beta+2\varepsilon +8)/\delta} \\
& \geq \bar{f}_0 \sigma_n^{(8\beta+4\varepsilon + 8 \beta/\beta_{\min}+8)/\delta} = \bar{f}_0 \sigma_n^{a_0^\tau b} =\bar{f}_0 \exp\left\{ -b a_0^\tau \log(\frac{1}{\sigma_n}) \right\}\\
&= \bar{f}_0 \exp\left\{ -b \left(a_0 (\log(\frac{1}{\sigma_n})^{1/\tau})\right)^\tau \right\} = \bar{f}_0 \exp\left\{-b a_{\sigma_n}^\tau\right\}.
\end{align*}
As $a_{\sigma_n}>a$ and as $f_{0|J}(\tilde{x}|y_J) \geq \bar{f}_0 \exp\{-b a_{\sigma_n}^\tau\}$, then the tail condition (\ref{eq:tail_cond2}) is satisfied only if $||\tilde{x}|| < a_{\sigma_n}$. Therefore, $E_{\sigma_n}^*  \subset \left\{ \tilde{x}\in \mathbb{R}^{d_J} \: : \: ||\tilde{x}||\leq a_{\sigma_n}\right\}$. As in the proof of Proposition 1 of \cite{ShenTokdarGhosal2013}, by Markov's inequality,
\begin{align*}
F_{0|J}\left(||\tilde{X}|| > a_{\sigma_n}|y_J\right)&\leq F_{0|J}(E_{\sigma_n}^{*,c}|y_J) =   F_{0|J}\left(f_{0|J}(\tilde{x}|y_J)^{-\delta}>\sigma_n^{-(4\beta+2\varepsilon + 8 \beta/\beta_{\min})}|y_J\right) \\
&\leq B_0 \sigma_n^{4\beta+2\varepsilon+8\beta/\beta_{\min}}
= B_0 \sigma_n^{4\beta+2\varepsilon}\underline{\sigma}_{n} ^8
\end{align*}
as desired since $\sigma_n^{\beta/\beta_{\min}}=\underline{\sigma}_n$ and the tail condition on $f_{0|J}(\cdot|y_J)$, \eqref{eq:tail_cond2}, implies the existence of a $\delta>0$ small enough such that $E_{0|J}(f_{0|J}^{-\delta}) \leq B_0<\infty$ for any $y_J \in \mathcal{Y}_J$.
\end{proof}

\begin{lemma}
\label{lm:H_dist_model_pi0_model}
Under the assumptions and notation of Section \ref{sec:post_rates},
for $m=K N_J$ and any $\theta\in S_{\theta^\star}$
\[
d_H^2(p_{|J}^\star(\cdot |\cdot)\pi_0(\cdot), p(\cdot,\cdot | \theta, m )) \lesssim \sigma_n^{2\beta}.
\]
\end{lemma}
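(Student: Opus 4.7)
The plan is to bound $d_H^2$ via a two-step triangle decomposition through the intermediate density
\[
\hat{p}(y,x) \;=\; \sum_{j=1}^{K}\sum_{k\in\mathcal{Y}_J} \alpha_{jk}\,\indic\{y_J=k\}\,P^I_{jk}(y_I,x),\qquad P^I_{jk}(y_I,x)=\int_{A_{y_I}}\phi(\tilde{y}_I,x;\mu_{jk,J^c},\sigma_{J^c})\,d\tilde{y}_I,
\]
which replaces the discrete-coordinate factor $\int_{A_{y_J}}\phi(\tilde{y}_J;\mu_{jk,J},\sigma_J)\,d\tilde{y}_J$ in $p(\cdot,\cdot|\theta,m)$ by the exact indicator $\indic\{y_J=k\}$. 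Applying $d_H^2(u,w)\leq 2d_H^2(u,v)+2d_H^2(v,w)$, it suffices to show each of $d_H^2(\hat{p},p(\cdot,\cdot|\theta,m))$ and $d_H^2(p_{|J}^\star\pi_{0J},\hat{p})$ is $\lesssim \sigma_n^{2\beta}$.

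For the first piece I would bound $d_H^2\leq d_{TV}$ by Lemma \ref{lm:p_f_distance_bds}, and then use the triangle inequality inside the $y_J$-sum to obtain $d_{TV}(\hat{p},p(\cdot,\cdot|\theta,m))\leq \sum_{jk}\alpha_{jk}\sum_{y_J}|P^J_{jk}(y_J)-\indic\{y_J=k\}|$, where $P^J_{jk}(y_J)=\int_{A_{y_J}}\phi(\tilde{y}_J;\mu_{jk,J},\sigma_J)\,d\tilde{y}_J$. The inner sum equals $2\int_{A_k^c}\phi(\tilde{y}_J;\mu_{jk,J},\sigma_J)\,d\tilde{y}_J$; the $S_{\theta^\star}$ constraints $|\mu_{jk,i}-k_i|\leq 1/(4N_i)$ and $\sigma_i^2<\sigma_i^{\star 2}=1/(64N_i^2\beta\log(1/\sigma_n))$ for $i\in J$ together with the coordinatewise Gaussian tail bound $\Phi(-2\sqrt{\beta\log(1/\sigma_n)})\leq \sigma_n^{2\beta}/2$ make this $\lesssim d_J\,\sigma_n^{2\beta}$, so the weighted total is $O(\sigma_n^{2\beta})$.

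For the second piece, the disjoint supports across different $k$'s inside the indicator decouple the Hellinger into $d_H^2(p_{|J}^\star\pi_{0J},\hat{p})=\sum_{k\in\mathcal{Y}_J}d_H^2\!\left(\sum_j\alpha_{jk}^\star P^I_{j|k},\,\sum_j\alpha_{jk}P^I_{jk}\right)$, where $P^I_{j|k}$ is $P^I_{jk}$ but with parameters $(\mu_{j|k}^\star,\sigma_{J^c}^\star)$. A second triangle step through $\sum_j\alpha_{jk}^\star P^I_{jk}$ bounds each summand by $2\sum_j\alpha_{jk}^\star d_H^2(P^I_{j|k},P^I_{jk})+2\sum_j|\alpha_{jk}^\star-\alpha_{jk}|$, invoking convexity of squared Hellinger in mixtures for the first term and $d_H^2\leq d_{TV}$ for the second. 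Summed over $k$, the $\alpha$-discrepancy is $\leq 4\sigma_n^{2\beta}$ by the $S_{\theta^\star}$ constraint $\sum_r|\alpha_r-\alpha_r^\star|\leq 2\sigma_n^{2\beta}$. For the component-discrepancy, $\alpha_{jk}^\star=0$ for $j>N$, so it suffices to show $\max_{j\leq N,\,k}d_H^2(P^I_{j|k},P^I_{jk})\lesssim\sigma_n^{2\beta}$; by Lemma \ref{lm:p_f_distance_bds} this reduces to a Gaussian-vs-Gaussian Hellinger, and inserting the $S_{\theta^\star}$ bounds $|\mu_{j|k,i}^\star-\mu_{jk,J^c,i}|\leq\sigma_n^{\beta/\beta_i}\tilde{\epsilon}_n^{2b_1}$ (valid for $j\leq N$ from the ellipsoid definition of $U_{j|k}$) and $\sigma_i^2\in(\sigma_i^{\star 2}/(1+\sigma_n^{2\beta}),\sigma_i^{\star 2})$ for $i\in J^c$ into the closed-form identity for Hellinger between diagonal Gaussians yields a coordinatewise expansion whose total is at most $d_{J^c}(\tilde{\epsilon}_n^{4b_1}+\sigma_n^{4\beta})=o(\sigma_n^{2\beta})$, since $b_1>1$ and $\tilde{\epsilon}_n^2$ and $\sigma_n^{2\beta}$ agree up to a log factor.

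The main obstacle is the coordinate-by-coordinate bookkeeping in the Gaussian Hellinger step: the different anisotropic scales $\sigma_n^{2\beta/\beta_i}$ across $i\in J^c$ must cancel cleanly against the commensurate mean-gap bounds $\sigma_n^{\beta/\beta_i}\tilde{\epsilon}_n^{2b_1}$, so that neither the mean nor the variance contribution picks up $\beta_{\min}$-dependent factors that would spoil the target rate $\sigma_n^{2\beta}$. Once this bookkeeping is verified, collecting the three $O(\sigma_n^{2\beta})$ bounds completes the proof.
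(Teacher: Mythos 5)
Your outline is correct, but it is organized differently from the paper's proof, and the two routes are worth contrasting. The paper passes immediately to total variation via $d_H^2\leq d_{TV}$, applies Lemma \ref{lm:p_f_distance_bds} to move from probability mass functions to latent-variable densities, and then performs a \emph{single} triangle split of the TV integral: one branch compares the $J^c$-Gaussians $\phi(\cdot;\mu_{j|k}^\star,\sigma_{J^c}^\star)$ vs.\ $\phi(\cdot;\mu_{jk,J^c},\sigma_{J^c})$ (bounded via KL and Pinsker in Lemma \ref{lm:TV_normal_dist}), and the other branch is split again into the indicator-vs.-Gaussian-mass term and the $\sum_r|\alpha_r-\alpha_r^\star|$ term (both handled with Lemma \ref{lm:integral_approx_Ak} and the $S_{\theta^\star}$ constraints). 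You instead introduce $\hat p$ as the first intermediate quantity (indicator on $y_J$, but $\theta$-parameters throughout), peel off the indicator-vs.-Gaussian-mass piece first, and then use the fact that the remaining comparison has exactly matching indicators so that $d_H^2$ \emph{decouples exactly} over $k\in\mathcal{Y}_J$; inside each $k$-block you split once more through $\sum_j\alpha_{jk}^\star P^I_{jk}$, apply convexity of squared Hellinger over the mixture for the Gaussian comparison, and close with the exact Gaussian--Gaussian Hellinger identity rather than the KL/Pinsker route. Both get the same $O(\sigma_n^{2\beta})$ contributions from the same three sources (indicator tail mass, $\alpha$-discrepancy, Gaussian parameter discrepancy), so the bookkeeping concern you flag at the end does resolve: the $S_{\theta^\star}$ constraints are calibrated so that $(\mu_{j|k,i}^\star-\mu_{jk,i})^2/\sigma_i^{\star 2}\leq\tilde\epsilon_n^{4b_1}$ and $|1-\sigma_i^2/\sigma_i^{\star2}|\lesssim\sigma_n^{2\beta}$ \emph{coordinatewise}, so no $\beta_{\min}$-dependent factor survives. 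Your route avoids one TV passage by exploiting the disjoint supports, at the cost of needing the Hellinger convexity step and the unnormalized-measure form of $d_H^2$; the paper's route stays in TV throughout and reuses the paper's existing auxiliary lemmas more directly. Either is a clean path to the claim.
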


\begin{proof}

Let us define
\[
f_{J}(y_J,\tilde{x}| \theta, m)= \int_{A_{y_J}} f(\tilde{y}_J, \tilde{x}| \theta, m) d\tilde{y}_J.
\]
Then, 
\begin{align*}
&d_H^2(p_{|J}^\star(\cdot |\cdot)\pi_0(\cdot), p(\cdot,\cdot | \theta, m )) \leq d_{TV}(p_{|J}^\star(\cdot |\cdot)\pi_0(\cdot), p(\cdot,\cdot | \theta, m ))\\
&\leq d_{TV}(f_{|J}^\star(\cdot |\cdot)\pi_0(\cdot), f_{J}(\cdot,\cdot | \theta, m )) \\
&= \sum_{y_J\in\mathcal{Y}_J} \int_{\tilde{\mathcal{X}}} \bigg|
\sum_{k\in\mathcal{Y}_J} \sum_{j=1}^{K}  \alpha_{j|k}^\star \pi_0(k) 
\indic
\{k=y_J\} \phi(\tilde{x},\mu_{j|k}^{\star},\sigma_{J^c}^\star) 
\\& \;\;\;\;\;\;\;\;\;\;\;\;\;\;\;\;\;\;\;\;\;\;
-
\alpha_{jk}\int_{A_{y_J}} \phi(\tilde{y}_J,\mu_{jk,J},\sigma_J) d\tilde{y}_J\cdot \phi(\tilde{x},\mu_{jk,J^c},\sigma_{J^c}) 
  \bigg| d\tilde{x}\\
&\leq
\sum_{y_J\in\mathcal{Y}_J} \int_{\tilde{\mathcal{X}}} \left|
\sum_{k\in\mathcal{Y}_J} \sum_{j=1}^{K} \alpha_{jk}^\star
\indic
\{k=y_J\} \phi(\tilde{x},\mu_{j|k}^{\star},\sigma_{J^c}^\star) - 
\alpha_{jk}^\star\indic \{k=y_J\} \phi(\tilde{x},\mu_{jk,J^c},\sigma_{J^c}) 
\right| d\tilde{x}
\\
&
+ 
\sum_{y_J\in\mathcal{Y}_J} \int_{\tilde{\mathcal{X}}} \left|
\sum_{k\in\mathcal{Y}_J} \sum_{j=1}^{K}
\alpha_{jk}^\star\indic \{k=y_J\} \phi(\tilde{x},\mu_{jk,J^c},\sigma_{J^c}) 
-
\alpha_{jk}\int_{A_{y_J}} \phi(\tilde{y},\mu_{jk,J},\sigma_J) d\tilde{y}
\phi(\tilde{x},\mu_{jk,J^c},\sigma_{J^c}) 
\right| d\tilde{x},
\end{align*}
where the first inequality follows from $d_H^2(\cdot,\cdot)\leq d_{TV}(\cdot,\cdot)$, the second inequality holds by Lemma \ref{lm:p_f_distance_bds}, and the last inequality is obtained 
by 
the triangle inequality.

Let's explore the two parts of the right hand side in the last inequality independently. 
First,
\begin{align*}
&\sum_{y_J\in\mathcal{Y}_J} \int_{\tilde{\mathcal{X}}} \left|
\sum_{k\in\mathcal{Y}_J} \sum_{j=1}^{K} \alpha_{jk}^\star
\indic
\{k=y_J\} \phi(\tilde{x},\mu_{j|k}^{\star},\sigma_{J^c}^\star) - 
\alpha_{jk}^\star\indic \{k=y_J\} \phi(\tilde{x},\mu_{jk,J^c},\sigma_{J^c}) 
\right| d\tilde{x}\\
&\leq
 \sum_{y_J\in\mathcal{Y}_J}
\sum_{k\in\mathcal{Y}_J} \sum_{j=1}^{K} \alpha_{jk}^\star
\indic\{k=y_J\}
 \int_{\tilde{\mathcal{X}}} \left| \phi(\tilde{x},\mu_{j|k}^{\star},\sigma_{J^c}^\star) - \phi(\tilde{x},\mu_{jk,J^c},\sigma_{J^c}) 
\right| d\tilde{x}
\\
&\leq 
\max_{j\leq N,k\in\mathcal{Y}_J} d_{TV}(\phi(\cdot; \mu_{j|k}^{\star},\sigma_{J^c}^\star),  \phi(\cdot,\mu_{jk,J^c},\sigma_{J^c})) \lesssim \sigma_n^{2\beta},
\end{align*}
where the fact that $\alpha^\star_{j,k}=0$ for $j>N$ by design is used to get $j\leq N$ rather than $j\leq K$ in the max subscript.
The last inequality is proved in Lemma \ref{lm:TV_normal_dist}.

Second,
\begin{align*}
&\sum_{y_J\in\mathcal{Y}_J} \int_{\tilde{\mathcal{X}}} \left|
\sum_{k\in\mathcal{Y}_J} \sum_{j=1}^{K} \alpha_{jk}^\star
\indic \{k=y_J\} \phi(\tilde{x},\mu_{jk,J^c},\sigma_{J^c}) 
-
\alpha_{jk}\int_{A_{y_J}} \phi(\tilde{y}_J,\mu_{jk,J},\sigma_J) d\tilde{y}_J\phi(\tilde{x},\mu_{jk,J^c},\sigma_{J^c}) 
\right| d\tilde{x}\\
&=
\sum_{j=1}^{K} 
\left( 
\sum_{y_J\in\mathcal{Y}_J}  \left|
\sum_{k\in\mathcal{Y}_J}
\alpha_{jk}^\star\indic \{k=y_J\} 
-
\alpha_{jk}\int_{A_{y_J}} \phi(\tilde{y}_J,\mu_{jk,J},\sigma_J) d\tilde{y}_J
\right| 
\int_{\tilde{\mathcal{X}}} \phi(\tilde{x},\mu_{jk,J^c},\sigma_{J^c})  d\tilde{x}
\right)\\
&=
\sum_{j=1}^{K} 
\sum_{y_J\in\mathcal{Y}_J}  \left|
\sum_{k\in\mathcal{Y}_J}
\alpha_{jk}^\star\indic \{k=y_J\} 
-
\alpha_{jk}\int_{A_{y_J}} \phi(\tilde{y}_J,\mu_{jk,J},\sigma_J) d\tilde{y}_J
\right| 
\\
&\leq 
\sum_{y_J\in\mathcal{Y}_J} 
\sum_{k\in\mathcal{Y}_J}\sum_{j=1}^{K} 
\left|
\alpha_{jk}^\star\indic \{k=y_J\} 
-
\alpha_{jk}^\star \int_{A_{y_J}} \phi(\tilde{y}_J,\mu_{jk,J},\sigma_J) d\tilde{y}_J
\right| 
\\
&
+
\sum_{y_J\in\mathcal{Y}_J} 
\sum_{k\in\mathcal{Y}_J}\sum_{j=1}^{K} 
\left|
\alpha_{jk}^\star \int_{A_{y_J}} \phi(\tilde{y}_J,\mu_{jk,J},\sigma_J) d\tilde{y}_J
-
\alpha_{jk} \int_{A_{y_J}} \phi(\tilde{y}_J,\mu_{jk,J},\sigma_J) d\tilde{y}_J
\right| \\
&\leq 
\sum_{y_J\in\mathcal{Y}_J} 
\sum_{k\in\mathcal{Y}_J}\sum_{j=1}^{K} 
\alpha_{jk}^\star
\left|
\indic \{k=y_J\} 
- \int_{A_{y_J}} \phi(\tilde{y}_J,\mu_{jk,J},\sigma_J) d\tilde{y}_J
\right|\\
&+
\sum_{y_J\in\mathcal{Y}_J} 
\sum_{k\in\mathcal{Y}_J}\sum_{j=1}^{K} 
\left|
\alpha_{jk}^\star
-
\alpha_{jk} 
\right| 
\int_{A_{y_J}} \phi(\tilde{y}_J,\mu_{jk,J},\sigma_J) d\tilde{y}_J\\
&= 
\sum_{k\in\mathcal{Y}_J}\sum_{j=1}^{K} 
\left(
\alpha_{jk}^\star
\sum_{y_J\in\mathcal{Y}_J} 
\left|
\indic \{k=y_J\} 
- \int_{A_{y_J}} \phi(\tilde{y}_J,\mu_{jk,J},\sigma_J) d\tilde{y}_J
\right|
\right)\\
&+
\sum_{k\in\mathcal{Y}_J}\sum_{j=1}^{K} 
\left( 
\left|
\alpha_{jk}^\star
-
\alpha_{jk} 
\right|
\sum_{y_J\in\mathcal{Y}_J} 
\int_{A_{y_J}} \phi(\tilde{y}_J,\mu_{jk,J},\sigma_J) d\tilde{y}_J
\right)
 \\
 &\leq 
\sum_{k\in\mathcal{Y}_J}\sum_{j=1}^{K} 
\alpha_{jk}^\star
\left[
\int_{A^c_{k}}\phi(\tilde{y}_J,\mu_{jk,J},\sigma_J) d\tilde{y}_J
+
\sum_{y_J\neq k}  \int_{A_{y_J}} \phi(\tilde{y}_J,\mu_{jk,J},\sigma_J) d\tilde{y}_J
\right]
+
\sum_{k\in\mathcal{Y}_J}\sum_{j=1}^{K} 
\left|
\alpha_{jk}^\star
-
\alpha_{jk} 
\right|\\
 &= 
\sum_{k\in\mathcal{Y}_J}\sum_{j=1}^{K} 
\alpha_{jk}^\star
\cdot 2 
\int_{A^c_{k}}\phi(\tilde{y}_J,\mu_{jk,J},\sigma_J) d\tilde{y}_J
+
\sum_{k\in\mathcal{Y}_J}\sum_{j=1}^{K} 
\left|
\alpha_{jk}^\star
-
\alpha_{jk} 
\right|\\
& \leq
2 \max_{j\leq N,k\in\mathcal{Y}_J} \int_{A^c_{k}}\phi(\tilde{y}_J,\mu_{jk,J},\sigma_J) d\tilde{y}_J
+
\sum_{k\in\mathcal{Y}_J}\sum_{j=1}^{K} 
\left|
\alpha_{jk}^\star
-
\alpha_{jk} 
\right| \lesssim \sigma_n^{2\beta}.
\end{align*}
The last inequality follows from Lemma \ref{lm:integral_approx_Ak} and the definition of $S_{\theta^\star}$.

\end{proof}

\begin{lemma}
\label{lm:TV_normal_dist}
Under the assumptions and notation of Section \ref{sec:post_rates},
\begin{align*}
&
\max_{j\leq N,k\in\mathcal{Y}_J} d_{TV}(\phi(\cdot; \mu_{j|k}^{\star},\sigma_{J^c}^\star),  
\phi(\cdot,\mu_{jk,J^c},\sigma_{J^c}))
\lesssim \sigma_n^{2\beta}.
\end{align*}

\end{lemma}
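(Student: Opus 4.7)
My plan is to split the total variation distance via the triangle inequality into a piece in which only the means differ and a piece in which only the (diagonal) standard deviations differ, and then to control each piece by combining Pinsker's inequality with the closed-form Kullback--Leibler divergence between diagonal Gaussians. Writing $\phi^\star = \phi(\cdot;\mu^\star_{j|k},\sigma^\star_{J^c})$, $\phi = \phi(\cdot;\mu_{jk,J^c},\sigma_{J^c})$, and inserting the intermediate density $\tilde\phi = \phi(\cdot;\mu_{jk,J^c},\sigma^\star_{J^c})$, I will bound $d_{TV}(\phi^\star,\phi) \leq d_{TV}(\phi^\star,\tilde\phi) + d_{TV}(\tilde\phi,\phi)$ and show that each summand is $O(\sigma_n^{2\beta})$ uniformly in $j\leq N$ and $k\in\mathcal{Y}_J$ (the maximum over these indices then only affects the hidden constant).

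For the mean-only piece, the closed form gives $KL(\phi^\star\|\tilde\phi) = \tfrac12\sum_{i\in J^c}(\mu^\star_{j|k,i}-\mu_{jk,i})^2/\sigma_i^{\star 2}$. Since $\mu_{jk,J^c}\in U_{j|k}$ and $U_{j|k}$ is contained in the ellipsoid with center $\mu^\star_{j|k}$ and radii $\{\sigma_i^\star \tilde\epsilon_n^{2b_1}\}$, the ellipsoid condition yields $\sum_{i\in J^c}(\mu^\star_{j|k,i}-\mu_{jk,i})^2/\sigma_i^{\star 2}\leq \tilde\epsilon_n^{4b_1}$, hence $KL(\phi^\star\|\tilde\phi)\lesssim \tilde\epsilon_n^{4b_1}$. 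Pinsker's inequality then gives $d_{TV}(\phi^\star,\tilde\phi)\lesssim \tilde\epsilon_n^{2b_1}$. The defining property $\tilde\epsilon_n^{b_1}\{\log(1/\tilde\epsilon_n)\}^{5/4}\leq \tilde\epsilon_n$ yields $\tilde\epsilon_n^{2b_1}\leq \tilde\epsilon_n^{2}\{\log(1/\tilde\epsilon_n)\}^{-5/2}\lesssim \sigma_n^{2\beta}$ since $\sigma_n^{2\beta}=\tilde\epsilon_n^{2}\{\log(1/\tilde\epsilon_n)\}^{-2}$ by the definition of $\sigma_n$.

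For the covariance-only piece, both densities have mean $\mu_{jk,J^c}$ and diagonal covariance, so
\[
KL(\tilde\phi\|\phi) = \tfrac12\sum_{i\in J^c}\bigl[\rho_i^{-1}-1+\log\rho_i\bigr], \qquad \rho_i:=\sigma_i^2/\sigma_i^{\star 2}\in\bigl((1+\sigma_n^{2\beta})^{-1},1\bigr).
\]
Setting $u_i=\rho_i^{-1}-1\in(0,\sigma_n^{2\beta})$, each summand equals $u_i-\log(1+u_i)$ which is bounded by $u_i^2\leq \sigma_n^{4\beta}$ for $u_i$ small. Summing over the fixed number $d_{J^c}$ of coordinates gives $KL(\tilde\phi\|\phi)\lesssim \sigma_n^{4\beta}$, and Pinsker's inequality yields $d_{TV}(\tilde\phi,\phi)\lesssim \sigma_n^{2\beta}$. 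Combining the two pieces establishes the claim.

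The proof is largely a calibration check rather than a conceptual difficulty: the only mildly delicate step is confirming that the $\tilde\epsilon_n^{2b_1}$ rate from the mean piece really absorbs into $\sigma_n^{2\beta}=\tilde\epsilon_n^{2}\{\log(1/\tilde\epsilon_n)\}^{-2}$, which is precisely why the condition $b_1>\max\{1,1/(2\beta)\}$ together with the polylogarithmic tightening $\tilde\epsilon_n^{b_1}\{\log(1/\tilde\epsilon_n)\}^{5/4}\leq \tilde\epsilon_n$ was imposed in the definition of $b_1$.
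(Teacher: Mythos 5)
Your proof is correct and essentially matches the paper's argument: both rest on a Pinsker-type bound $d_{TV}\lesssim\sqrt{KL}$, the closed-form KL divergence between diagonal Gaussians, the ellipsoid containment $\sum_{i\in J^c}(\mu^\star_{j|k,i}-\mu_{jk,i})^2/\sigma_i^{\star 2}\leq\tilde\epsilon_n^{4b_1}$, and the scale constraint $\sigma_i^2\in(\sigma_i^{\star2}(1+\sigma_n^{2\beta})^{-1},\sigma_i^{\star2})$ together with $|z-1-\log z|\lesssim(z-1)^2$. The only cosmetic difference is that you interpose $\tilde\phi$ and apply the triangle inequality before Pinsker, whereas the paper applies Pinsker once to the full $d_{KL}$ and splits the resulting sum; the calibration step $\tilde\epsilon_n^{2b_1}\lesssim\sigma_n^{2\beta}$ you carry out explicitly is exactly the $\tilde\epsilon_n^{4b_1}\leq\sigma_n^{4\beta}$ absorption the paper invokes.
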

\begin{proof}
Fix some $j\leq N$ and $k\in\mathcal{Y}_J$.
It is known that 
\[
d_{TV}(\phi(\cdot; \mu_{j|k}^{\star},\sigma_{J^c}^\star),  \phi(\cdot,\mu_{jk,J^c},\sigma_{J^c})) \leq 2 \sqrt{d_{KL}(\phi(\cdot; \mu_{j|k}^{\star},\sigma_{J^c}^\star),  \phi(\cdot,\mu_{jk,J^c},\sigma_{J^c}))}
\]
and
\[
d_{KL}(\phi(\cdot; \mu_{j|k}^{\star},\sigma_{J^c}^\star),  \phi(\cdot,\mu_{jk,J^c},\sigma_{J^c}))=
\sum_{i \in J^c} \frac{ \sigma_{i}^2}{\sigma_{i}^{\star2}} -1 - \log \frac{ \sigma_{i}^2}{\sigma_{i}^{\star2}} +  \frac{(\mu_{j|k,i}^{\star}-\mu_{jk,i})^2}{\sigma_{i}^{\star2}}.
\]
From the definition of $S_{\theta^\star}$, 
\[
\sum_{i \in J^c}  \frac{(\mu_{j|k,i}^{\star}-\mu_{jk,i})^2}{\sigma_{i}^{\star2}} \leq \tilde{\epsilon}_n^{4 b_1} \leq \sigma_n^{4\beta}.
\]
Since $\sigma_{i}^2\in (\sigma_{i}^{\star2}(1+\sigma_n^{2\beta})^\inv, \sigma_{i}^{\star2})$
and 
the  fact that $|z -1-\log z| \lesssim |z-1|^2$ for $z$ in a neighborhood of $1$, we have for all sufficiently large $n$
\[
\left |\frac{ \sigma_{i}^2}{\sigma_{i}^{\star2}} -1 - \log \frac{ \sigma_{i}^2}{\sigma_{i}^{\star2}}\right|
\lesssim \left(1-\frac{ \sigma_{i}^2}{\sigma_{i}^{\star2}}\right)^2
\lesssim \sigma_n^{4\beta}.
\]
The three inequalities derived above imply the claim of the lemma.

\end{proof}

\begin{lemma}
\label{lm:integral_approx_Ak}
Under the assumptions and notation of Section \ref{sec:post_rates},
for $\theta \in S_{\theta^\star}$,
\begin{align*}
 \max_{j\leq N,k\in\mathcal{Y}_J} \int_{A^c_{k}}\phi(\tilde{y}_J,\mu_{jk,J},\sigma_J) d\tilde{y}_J \lesssim \sigma_n^{2\beta}.
\end{align*}

\end{lemma}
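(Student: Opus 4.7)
The plan is to exploit the product structure. Since $\phi(\tilde{y}_J;\mu_{jk,J},\sigma_J)$ is a product of univariate normals in each coordinate $i \in J$ and $A_k=\prod_{i \in J}A_{k_i}$, I can write
\[
\int_{A^c_k}\phi(\tilde{y}_J;\mu_{jk,J},\sigma_J)\,d\tilde{y}_J
= 1 - \prod_{i \in J}\int_{A_{k_i}}\phi(\tilde{y}_i;\mu_{jk,i},\sigma_i)\,d\tilde{y}_i
\leq \sum_{i \in J}\int_{A^c_{k_i}}\phi(\tilde{y}_i;\mu_{jk,i},\sigma_i)\,d\tilde{y}_i,
\]
using $1-\prod_i p_i \leq \sum_i(1-p_i)$ for $p_i \in [0,1]$. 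So it suffices to bound each one-dimensional tail by a constant multiple of $\sigma_n^{2\beta}$.

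For each $i \in J$, the interval $A_{k_i}$ has half-width $1/(2N_i)$ around $k_i$ (or is an infinite ray at a boundary). The sieve condition $\mu_{jk,i}\in[k_i - 1/(4N_i),k_i+1/(4N_i)]$ guarantees that the distance from $\mu_{jk,i}$ to the boundary of $A_{k_i}$ is at least $1/(4N_i)$. Applying the standard Gaussian tail bound $P(|Z|>t)\leq 2\exp(-t^2/2)$,
\[
\int_{A^c_{k_i}}\phi(\tilde{y}_i;\mu_{jk,i},\sigma_i)\,d\tilde{y}_i
\leq 2\exp\!\left\{-\tfrac{1}{32 N_i^2 \sigma_i^2}\right\}.
\]
Plugging in $\sigma_i^2 < \sigma_i^{\star 2}=1/[64 N_i^2 \beta \log(1/\sigma_n)]$ from the definition of $S_{\theta^\star}$ gives
\[
\frac{1}{32 N_i^2 \sigma_i^2} \geq 2\beta \log(1/\sigma_n),
\]
so each one-dimensional tail is bounded by $2\sigma_n^{2\beta}$.

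Summing over $i \in J$, which is a fixed-cardinality set (at most $d_y$), yields
\[
\int_{A^c_k}\phi(\tilde{y}_J;\mu_{jk,J},\sigma_J)\,d\tilde{y}_J \leq 2\,\mathrm{card}(J)\,\sigma_n^{2\beta} \lesssim \sigma_n^{2\beta},
\]
with a constant that is independent of $j\leq N$ and $k \in \mathcal{Y}_J$, proving the claim. There is no real obstacle here; the key observation is that the choice $\sigma_i^{\star 2}\asymp 1/[N_i^2 \log(1/\sigma_n)]$ in the definition of $\theta^\star$ was precisely calibrated so that the Gaussian tail across a boundary at distance $\sim 1/N_i$ picks up the $\sigma_n^{2\beta}$ factor after the $\log(1/\sigma_n)$ in the denominator cancels the $\log$ coming out of $\exp\{-c/\sigma_i^2\}$.
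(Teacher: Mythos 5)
Your proof is correct and follows essentially the same route as the paper: a coordinatewise union bound on $A_k^c$, the sieve constraint $\mu_{jk,i}\in[k_i-\tfrac{1}{4N_i},k_i+\tfrac{1}{4N_i}]$ to guarantee a distance of at least $1/(4N_i)$ to the boundary of $A_{k_i}$, the sub-Gaussian tail bound, and then the calibration $\sigma_i^2<\sigma_i^{\star 2}=1/[64N_i^2\beta\log(1/\sigma_n)]$ to produce $\sigma_n^{2\beta}$. The only cosmetic difference is that the paper rederives the Gaussian tail bound via the moment generating function rather than citing it as standard.
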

\begin{proof}
Fix $j\leq N$, $k\in\mathcal{Y}_J$, and $\theta \in S_{\theta^\star}$.
Since $\mu_{jk,i}\in\left[ {k_i}-\frac{1}{4N_i},{k_i}+\frac{1}{4N_i}
  \right]$,
\begin{align*}
 \int_{A^c_{k}}\phi(\tilde{y}_J,\mu_{jk,J},\sigma_J) d\tilde{y}_J &\leq \sum_{i \in J} Pr \left( \tilde{y}_i \notin \left[
k_i-\frac{1}{2N_i},k_i+\frac{1}{2N_i}
  \right]
 \right) \\
 & \leq  \sum_{i \in J} Pr \left( \tilde{y}_i \notin \left[
\mu_{jk,i}-\frac{1}{4N_i},\mu_{jk,i}+\frac{1}{4N_i}  \right]
 \right) \\
&    = 2  \sum_{i \in J}
      \int_{-\infty}^{-\frac{1}{4N_i\sigma_i}} \phi(\tilde{y}_i,0,1) d\tilde{y}_i\\
&   \leq 
    2  \sum_{i \in J}
\exp\left\{-\frac{1}{2(4N_i\sigma_i)^2}\right\} \leq 2  \sum_{i \in J} \sigma_n^{2\beta} \lesssim \sigma_n^{2\beta},
 \end{align*}
where the last inequality follows from the restrictions on $\sigma_J$ in $S_{\theta^\star}$ and the penultimate inequality follows from  a bound on the normal tail probability derived below.

If $\tilde{Y}_i$ has $N(0,1)$  distribution, then the moment generating function is $M(\theta)=\exp\{\theta^2/2\}$. Note that $\exp\{\theta(\tilde{Y}_i - (4N_i\sigma_i)^\inv ) \}\geq 1$ when $\tilde{Y}_i \leq (4N_i\sigma_i^y)^\inv$ and $\theta\leq 0$, therefore:
\begin{align*}
\int_{-\infty}^{-\frac{1}{4N_i\sigma_i}} \phi(\tilde{y}_i,0,1) d\tilde{y}_i &\leq \inf_{\theta\leq 0} \mathbb{P} \exp\left\{ \theta(\tilde{Y}_i - (4N_i\sigma_i)^\inv )
\right\}=\inf_{\theta\leq 0} \exp\left\{ -\theta (4N_i\sigma_i)^\inv 
\right\}M(\theta)\\
&= \inf_{\theta\leq 0} \exp\left\{ -\theta (4N_i\sigma_i)^\inv 
\right\}\exp\left\{ \theta^2/2
\right\}=\exp\left\{ -(4N_i\sigma_i)^{-2}/2 
\right\}.
\end{align*}
\end{proof}

\begin{lemma}
\label{lm:Bound_Model_truef_ratio}
Under the assumptions and notation of Section \ref{sec:post_rates}, for any 
$(y_J,y_I) \in \mathcal{Y}$,  some constants $C_3, C_4>0$ and 
all sufficiently large $n$,
\begin{align}
\frac{p(y_J,y_I,{x} | \theta, m)}{p_{0}( y_J,y_I,x)} 
\geq
C_3  \frac{\sigma_n^{2\beta }}{m^2}
\equiv \lambda_n,
\end{align}
when $\norm{{x}} \leq a_{\sigma_n}$ and
\begin{align}
\frac{p(y_J,y_I,{x} | \theta, m)}{p_{0}( y_J,y_I,x)} 
 \geq 
\exp\left\{ - \frac{8||{x}||^2}{\underline{\sigma}_{n}^2} - C_4 \log n\right\}
\end{align}
when $\norm{x} > a_{\sigma_n}$.

\end{lemma}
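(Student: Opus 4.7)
The plan is to lower bound the numerator $p(y_J,y_I,x\mid\theta,m)$ by retaining only the mixture components with first index $k=y_J$ (and in fact only a single carefully chosen $j^\star$), while using the tail condition of Assumption \ref{as:f0_subexp_tail} and the conditional marginalization $p_0(y_J,y_I,x)=\pi_{0J}(y_J)\int_{A_{y_I}}f_{0|J}(\tilde y_I,x\mid y_J)d\tilde y_I$ to bound the denominator from above. Concretely, writing
\[
p(y_J,y_I,x\mid \theta,m)\geq \sum_{j=1}^{K}\alpha_{jy_J}\Bigl[\int_{A_{y_J}}\phi(\tilde y_J;\mu_{jy_J,J},\sigma_J)d\tilde y_J\Bigr]\int_{A_{y_I}}\phi(\tilde y_I,x;\mu_{jy_J,J^c},\sigma_{J^c})d\tilde y_I,
\]
the $J$-integral is bounded below by $1-O(\sigma_n^{2\beta})$ using the argument of Lemma \ref{lm:integral_approx_Ak} together with $\mu_{jy_J,i}\in[y_{J,i}-1/(4N_i),y_{J,i}+1/(4N_i)]$ for $i\in J$. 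Combined with $\alpha_{jy_J}^\star=\alpha_{j|y_J}^\star\pi_{0J}(y_J)$ and $|\alpha_{jy_J}-\alpha_{jy_J}^\star|\leq 2\sigma_n^{2\beta}$, the factor $\pi_{0J}(y_J)$ cancels against the denominator. Throughout we will use $\prod_{i\in J^c}\sigma_i^\star=\sigma_n^{d_{J^c}}$, which follows from $\sum_{i\in J^c}\beta/\beta_i=d_{J^c}$.

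For the first regime $\|x\|\leq a_{\sigma_n}$, first use Assumption \ref{as:f0_Ay_tail_unif_x} to restrict attention to $\tilde y_I\in A_{y_I}$ with $\|\tilde y_I\|\leq\bar y$, and pick $j^\star\leq N$ such that some $\tilde x^\circ=(\tilde y_I^\circ,x)\in A_{y_I}\times\{x\}$ lies in the ellipsoid $U_{j^\star|y_J}$ (whose semi-axes are of order $\sigma_i^\star$, and which covers all $\tilde x$ with $\|\tilde x\|\leq 2a_{\sigma_n}$). For $\tilde y_I$ near $\tilde y_I^\circ$, the Gaussian $\phi(\tilde y_I,x;\mu_{j^\star y_J,J^c},\sigma_{J^c})$ is bounded below by a constant times $\sigma_n^{-d_{J^c}}$ since $\mu_{j^\star y_J,J^c}\in U_{j^\star|y_J}$ and $\sigma_i\asymp\sigma_i^\star$. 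Coupling this pointwise bound with $\alpha_{j^\star y_J}\geq \sigma_n^{2\beta+d_{J^c}}/(2m^2)$, bounding $f_{0|J}(\tilde y_I,x\mid y_J)\leq\bar f_0$, and cancelling $\pi_{0J}(y_J)$ yields the ratio bound $C_3\sigma_n^{2\beta}/m^2$.

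For the second regime $\|x\|>a_{\sigma_n}$, keep a single mixture component with $k=y_J$ and apply the elementary Gaussian inequality
\[
\phi(\tilde y_I,x;\mu,\sigma_{J^c})\geq (2\pi)^{-d_{J^c}/2}\prod_{i\in J^c}\sigma_i^{-1}\,\exp\bigl\{-\tfrac{\|(\tilde y_I,x)-\mu\|^2}{2\underline\sigma_n^{2}}\bigr\}.
\]
Since every $\mu_{jy_J,J^c}$ lies in $\{\|\cdot\|\leq 2a_{\sigma_n}\}$ and $\tilde y_I$ may be restricted to $\|\tilde y_I\|\leq\bar y$, the squared distance is bounded by $2\|x\|^2+(\log n)^{O(1)}$, so the exponent is at most $-8\|x\|^2/\underline\sigma_n^2+O(\log n)$ after choosing constants generously. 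The prefactors $\prod\sigma_i^{-1}$, $\alpha_{jy_J}$, the $J$-integral, and $1/p_0$ are all polynomial in $n$ (using $\sigma_n,\underline\sigma_n,m,a_{\sigma_n}$, Assumption \ref{as:Nj_o_n_1nu}, and the sub-Gaussian/subexponential tail of $f_{0|J}$ from \eqref{eq:tail_cond2}); their logarithms absorb into $C_4\log n$. For $\tau\leq 2$ the tail decay $\exp(-b\|x\|^\tau)$ of $f_{0|J}$ is dominated by $\|x\|^2/\underline\sigma_n^2$, so the denominator bound only helps.

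The main obstacle is the first regime. The delicate point is the cancellation of $\pi_{0J}(y_J)$: the bound $\alpha_{jy_J}\geq\sigma_n^{2\beta+d_{J^c}}/(2m^2)$ alone is insufficient because $\pi_{0J}(y_J)$ appearing in the denominator could be arbitrarily small, so the argument must exploit the product form $\alpha_{jy_J}^\star=\alpha_{j|y_J}^\star\pi_{0J}(y_J)$ and show that the perturbation $|\alpha_{jy_J}-\alpha_{jy_J}^\star|\leq 2\sigma_n^{2\beta}$ does not destroy this structure when multiplied by the Gaussian factor of order $\sigma_n^{-d_{J^c}}$; this is why the claimed $\lambda_n$ has the extra $\sigma_n^{d_{J^c}}$ hidden in $\alpha_{jy_J}\geq\sigma_n^{2\beta+d_{J^c}}/(2m^2)$. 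A secondary bookkeeping issue is securing $\tilde x^\circ\in U_{j^\star|y_J}$ uniformly in $(y_I,x)$ over $\|x\|\leq a_{\sigma_n}$, which requires the partition $\{U_{j|y_J}\}_{j\leq N}$ to cover $\{\|\tilde x\|\leq 2a_{\sigma_n}\}$ — an ingredient already assured by the construction from \cite{ShenTokdarGhosal2013} together with $\|\tilde y_I\|\leq\bar y$ from Assumption \ref{as:f0_Ay_tail_unif_x}.
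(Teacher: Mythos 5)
Your overall plan is in the right spirit (keep only the mixture components with discrete mean $k=y_J$, use the $U_{j|k}$ partition to find a nearby component, exploit the $\alpha$-floor in $S_{\theta^\star}$, and split on $\|x\|\lessgtr a_{\sigma_n}$), but two of the things you flag as delicate are handled in a materially different and simpler way in the paper, and one of your steps has a genuine gap.

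First, the paper does \emph{not} cancel $\pi_{0J}(y_J)$ against anything. It writes $p_0(y_J,y_I,x)=\int_{A_{y_I}} f_{0|J}(\tilde y_I,x\mid y_J)\,\pi_{0J}(y_J)\,d\tilde y_I$ and bounds the \emph{integrand} of the denominator from above by $\bar f_0\cdot 1$ using Assumption~\ref{as:f0_subexp_tail} together with the trivial $\pi_{0J}(y_J)\le 1$. That single step yields $f_J/(f_{0|J}\pi_{0J})\ge \bar f_0^{-1}f_J$, and the $\sigma_n^{d_{J^c}}$ in the $\alpha$-floor is there precisely to offset the $\sigma_n^{-d_{J^c}}$ of the Gaussian prefactor. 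Your ``delicate point'' paragraph therefore addresses a problem the paper never encounters; moreover, your worry is aimed in the wrong direction: since $\pi_{0J}(y_J)$ sits in the \emph{denominator}, a small value only makes the ratio larger, and there is nothing to control. The product structure $\alpha^\star_{jk}=\alpha^\star_{j|k}\pi_{0J}(k)$ plays no role in this lemma.

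Second, in the regime $\|x\|\le a_{\sigma_n}$, fixing a single $j^\star$ that captures one point $\tilde x^\circ=(\tilde y_I^\circ,x)$ and then only lower bounding $\phi(\tilde y_I,x;\mu_{j^\star y_J,J^c},\sigma_{J^c})$ ``for $\tilde y_I$ near $\tilde y_I^\circ$'' does not directly give you a lower bound on $p(y_J,y_I,x\mid\theta,m)/p_0(y_J,y_I,x)$. The Gaussian lower bound by $\mathrm{const}\cdot\sigma_n^{-d_{J^c}}$ holds only within an $O(\sigma_i)$-neighborhood of the chosen center, so upon integrating over $A_{y_I}$ your numerator acquires a potentially much smaller factor than your denominator, which is an integral over all of $A_{y_I}$. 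The paper avoids this by choosing $j^\star=j^\star(\tilde x)$ \emph{pointwise} (the partition $\{U_{j|k}\}_{j\le K}$, not just $j\le N$, covers $\{\|\tilde x\|\le 2a_{\sigma_n}\}$, and the $j\le N$ ellipsoids alone, being of size $\sigma_i^\star\tilde\epsilon_n^{2b_1}$, do not cover), obtaining the uniform pointwise bound $f_J(y_J,\tilde x\mid\theta,m)/[f_{0|J}(\tilde x\mid y_J)\pi_{0J}(y_J)]\ge 2\lambda_n$ for all $\|\tilde x\|\le 2a_{\sigma_n}$, and then passing to the $p$-ratio via Lemma~\ref{lm:p_f_distance_bds} (after restricting $\tilde y_I$ to $\|\tilde y_I\|\le a_{\sigma_n}$ with Assumption~\ref{as:f0_Ay_tail_unif_x}, costing a factor $2$). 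Your argument is missing this $\tilde y_I$-uniform pointwise step. For the regime $\|x\|>a_{\sigma_n}$, your decomposition $\|\tilde x-\mu\|^2\le 2\|x\|^2+(\log n)^{O(1)}$ followed by division by $\underline\sigma_n^2$ does not produce an $O(\log n)$ remainder on its own (the extra term becomes polynomial in $n$); the paper instead uses $\|\mu_{jk,J^c}\|\le 2a_{\sigma_n}<2\|x\|$ to conclude $\|\tilde x-\mu\|^2\le 16\|x\|^2$ outright, so no separate $(\log n)^{O(1)}$ additive term ever appears in the exponent.
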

\begin{proof}
By assumption \eqref{eq:tail_cond2}, $f_{0|J}(\tilde{x}|y_J) \leq \bar{f}_0$, and  $\pi_{0J}(y_J)\leq 1$  for all $ (\tilde{x},y_J)$. Therefore,
\begin{align}
\label{eq:dgp_lower_bound}
\frac{f_J(y_J,\tilde{x} | \theta, m)}{f_{0|J}(\tilde{x} | y_J)\pi_{0J}(y_J)} 
\geq \bar{f}_0^\inv f_J(\tilde{x},y_J|\theta,m)
\end{align}

Let $k^\star = y_J$. Then, by Lemma \ref{lm:integral_approx_Ak}, for any $j \in \{1,\ldots,K\}$,
\begin{align*}
\int_{A_{y_J}} \phi(\tilde{y}_J;\mu_{j k^*,J},\sigma_J) d\tilde{y}_J \geq \frac{1}{2}
\end{align*}
for all $n$ large enough as $\sigma_n \rightarrow 0$.

For any $\tilde{x} \in \tilde{\mathcal{{X}}}$  with $\norm{\tilde{x}} \leq 2a_{\sigma_n}$,
by the construction of  sets $U_{j|k^\star}$, there exists $j^\star \in \left\{1,\ldots,K\right\}$  such that $\tilde{x}, \mu_{j^\star | k^\star} \in U_{j^\star|k^\star}$ and for all sufficiently large $n$, $\sum_{i \in J^c}(\tilde{x}_i-\mu_{j^\star | k^\star,i} )^2/ \sigma_i^2 \leq 4$.  Then,
\begin{align*}
\phi(\tilde{x},\mu_{j^\star | k^\star},\sigma_{J^c}) &= (2\pi)^{-d_{J^c}/2} 
\prod_{i \in J^c}\sigma_i^{-1}
 \exp\left\{ 
-0.5 \sum_{i \in J^c}(\tilde{x}_i-\mu_{j^\star | k^\star,i})^2/\sigma_i^2 
\right\} \\
&\geq 
(2\pi)^{-d_{J^c}/2} \sigma_n^{-d_{J^c}} e^{ -2 }
.
\end{align*}
Thus,
\begin{align*}
f_J(y_J,\tilde{x}|\theta)
&=
\sum_{k\in\mathcal{Y}_J}\sum_{j=1}^{K}
\alpha_{jk}\int_{A_{y_J}} \phi(\tilde{y}_J,\mu_{jk,J},\sigma_J) d\tilde{y}_J\phi(\tilde{x},\mu_{jk,J^c},\sigma_{J^c}) \\
&\geq  \alpha_{j^\star k^*}  \phi(\tilde{x},\mu_{j^\star k^\star,J^c},\sigma_{J^c})  \int_{A_{y_J}} \phi(\tilde{y}_J,\mu_{j^\star k^\star,J},\sigma_J) d\tilde{y}_J
\end{align*}
and for $C_{3} = \bar{f}_0^\inv (2\pi)^{-d_{J^c}/2} e^{ -2 }/ 8$,
\begin{align}
\frac{f_J(y_J,\tilde{x} | \theta, m)}{f_{0|J}(\tilde{x} | y_J)\pi_{0J}(y_J)} 
&\geq \bar{f}_0^\inv 
\cdot
\min_{j\leq K, k\in\mathcal{Y}_J}  \alpha_{jk} \cdot
(2\pi)^{-d_{J^c}/2} \sigma_n^{-d_{J^c}} e^{ -2 }
\cdot
   \frac{1}{2}  
\notag \\
\label{eq:bd_fratio} 
&\geq 2 C_{3}  \frac{\sigma_n^{2\beta }}{m^2}
= 2 \lambda_n.
\end{align}
By assumption \eqref{eq:ass_y_I_tail_indep_x}, for any $x \in \mathcal{X}$, any $y_J \in \mathcal{Y}_J$, and all sufficiently large $n$,
\begin{equation}
\label{eq:bd_IntYI}
\int_{A_{y_I}}f_{0|J}(\tilde{x} | y_J)\pi_{0J}(y_J)d\tilde{y}_I \leq 2 
\int_{A_{y_I} \cap \{\tilde{y}_I: \, \norm{\tilde{y}_I} \leq a_{\sigma_n} \}} f_{0|J}(\tilde{x} | y_J)\pi_{0J}(y_J)d\tilde{y}_I.
\end{equation}
For any $x \in \mathcal{X}$ with $\norm{{x}} \leq a_{\sigma_n}$ and  $\tilde{y}_I \in A_{y_I} \cap \{\tilde{y}_I: \, \norm{\tilde{y}_I} \leq a_{\sigma_n} \}$,
we have $\norm{\tilde{x}} \leq 2a_{\sigma_n}$ and
\begin{align}
\frac{p(y_J,y_I,{x} | \theta, m)}{p_{0}( y_J,y_I,x)} 
&= 
\frac{\int_{A_{y_I}} f_J(y_J,\tilde{x} | \theta, m)d\tilde{y}_I}{\int_{A_{y_I}}f_{0|J}(\tilde{x} | y_J)\pi_{0J}(y_J)d\tilde{y}_I} \notag
\\
&\geq \frac{\int_{A_{y_I} \cap \{\tilde{y}_I: \, \norm{\tilde{y}_I} \leq a_{\sigma_n} \}} f_J(y_J,\tilde{x} | \theta, m)d\tilde{y}_I}
{2\int_{A_{y_I} \cap \{\tilde{y}_I: \, \norm{\tilde{y}_I} \leq a_{\sigma_n} \}} f_{0|J}(\tilde{x} | y_J)\pi_{0J}(y_J)d\tilde{y}_I} 
\geq \lambda_n,
\label{eq:ratio_p_geq_lambda}
\end{align}
 where the first inequality follows from  \eqref{eq:bd_IntYI} and the second one from \eqref{eq:bd_fratio} combined with Lemma \ref{lm:p_f_distance_bds}.

Next, let us
bound $f_J(y_J,\tilde{x} | \theta, m)/f_{0|J}(\tilde{x} | y_J)\pi_{0}(y_J)$ from below for
$\tilde{x} \in \mathcal{\tilde{X}}$ such that $\norm{{x}} > a_{\sigma_n}$ and $\norm{\tilde{y}_I} \leq a_{\sigma_n}$.
For any $j\leq K$ and $k\in\mathcal{Y}_J$, $||\tilde{x}-\mu_{jk,J^c}||^2 \leq 2(||\tilde{x}||^2 + ||\mu_{jk,J^c}||^2) \leq 16||{x}||^2$ as  $||\mu_{jk,J^c}||\leq 2a_{\sigma_n}$ by construction of $U_{j|k}$ and $2||x||>||\tilde{x}||$. Then
\begin{align*}
\phi(\tilde{x},\mu_{jk,J^c},\sigma_{J^c}) &= (2\pi)^{-d_{J^c/2}} \prod_{i\in J^c}\sigma_i^{-1}
 \exp\left\{ 
-0.5 \sum_{i \in J^c}(\tilde{x}_i-\mu_{jk,i})^2/\sigma_i^2
\right\} 
\\
&\geq 
(2\pi)^{-d_{J^c/2}} \sigma_n^{-d_{J^c}} 
\exp\left\{ - \frac{8||{x}||^2}{\underline{\sigma}_{n}^2} \right\}.
\end{align*}
Then, for $n$ large enough
\begin{align*}
f_J(y_J,\tilde{x}|\theta,m)
&=
\sum_{k\in\mathcal{Y}_J} \sum_{j=1}^{K}
\alpha_{jk}\int_{A_{y_J}} \phi(\tilde{y}_J,\mu_{jk,J},\sigma_J) d\tilde{y}_J \phi(\tilde{x},\mu_{jk,J^c},\sigma_{J^c}) \\
&\geq (2\pi)^{-d_{J^c/2}} \sigma_{n}^{-d_{J^c}}
\exp\left\{ - \frac{8||{x}||^2}{\underline{\sigma}_{n}^2} \right\} 
 \sum_{j=1}^{K} \alpha_{jk} \sum_{k\in\mathcal{Y}_J} \int_{A_{y_J}} \phi(\tilde{y}_J,\mu_{jk,J},\sigma_J) d\tilde{y}_J \\
& \geq (2\pi)^{-d_{J^c/2}} \sigma_n^{-d_{J^c}} 
\exp\left\{ - \frac{8||{x}||^2}{\underline{\sigma}_{n}^2} \right\} 
  \frac{1}{2} K \min_{j,k} \alpha_{j k}.
\end{align*}
Combining this inequality with \eqref{eq:dgp_lower_bound}, we  get
\begin{align}
\frac{f_J(y_J,\tilde{x} | \theta, m)}{f_{0|J}(\tilde{x} | y_J)\pi_{0J}(y_J)} 
 & \geq \frac{1}{2} (2\pi)^{-d_{J^c/2}} \bar{f}_0^\inv \sigma_n^{-d_{J^c}} 
 K \frac{\sigma_n^{2\beta + d_{J^c}}}{2m^2}  
 \exp\left\{ - \frac{8||{x}||^2}{\underline{\sigma}_{n}^2} \right\}
\notag
\\
&  \geq  
\exp\left\{ - \frac{8||{x}||^2}{\underline{\sigma}_{n}^2} - C_4 \log n\right\}
\label{eq:ratio_f_geq_exp_x2}
\end{align} 
for sufficiently large $C_4$
because $|\log \left [ K \sigma_n^{2\beta}/m^2 \right ]| \lesssim \log n$.
 
Thus, for $||{x}|| > a_{\sigma_n}$, \eqref{eq:ratio_f_geq_exp_x2} 
and the first inequality in \eqref{eq:ratio_p_geq_lambda}, which holds for any $x \in \mathcal{X}$, deliver
\begin{align}
\label{eq:boundratio_forlarge_x}
\frac{p(y_J,y_I,{x} | \theta, m)}{p_{0}( y_J,y_I,x)} 
\geq 
\exp\left\{ - \frac{8||{x}||^2}{\underline{\sigma}_{n}^2} - C_4 \log n\right\}.
\end{align}
\end{proof}

\begin{lemma}
\label{lm:gKL_inequality}
Under the assumptions and notation of Section \ref{sec:post_rates},
for $\lambda_n<\lambda_0$, where  $\lambda_0$ is defined in Lemma \ref{lm:dH_KL},
\begin{align*}
&E_0 \left( \left[  \log \frac{p_0(y_J,y_I,x)}{p(y_J,y_I,{x}| \theta, m)}\right]^2\right)
\leq A \tepsilon_n^2\\
&E_0 \left( \left[  \log \frac{p_0(y_J,y_I,x)}{p(y_J,y_I,{x}| \theta, m)}\right]\right)
\leq A \tepsilon_n^2
\end{align*}
\end{lemma}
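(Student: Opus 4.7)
The plan is to apply Lemma \ref{lm:dH_KL} with $p=p_0$, $q=p(\cdot|\theta,m)$, and $\lambda=\lambda_n$, using the three ingredients already assembled in the preceding paragraphs of the proof of Theorem \ref{th:prior_thickness}: the Hellinger bound \eqref{eq:H2_bound_dgp_model}, i.e.\ $d_H^2(p_0, p(\cdot,\cdot|\theta,m)) \lesssim \sigma_n^{2\beta}$, and the two truncated-integral bounds \eqref{eq:exp_log_ratio2} and \eqref{eq:exp_log_ratio}, each of order $\sigma_n^{2\beta+\varepsilon}$. Substituting directly into the two inequalities supplied by Lemma \ref{lm:dH_KL} yields
\[
E_0 \log \frac{p_0}{p(\cdot|\theta,m)} \lesssim \sigma_n^{2\beta}\bigl(1+2\log(1/\lambda_n)\bigr) + \sigma_n^{2\beta+\varepsilon},
\]
\[
E_0 \Bigl[\log \frac{p_0}{p(\cdot|\theta,m)}\Bigr]^2 \lesssim \sigma_n^{2\beta}\bigl(12+2(\log(1/\lambda_n))^2\bigr) + \sigma_n^{2\beta+\varepsilon}.
\]

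The next step is a polynomial-growth check on $\lambda_n = C_3\sigma_n^{2\beta}/(2m^2)$. Since $\sigma_n = [\tilde{\epsilon}_n/\log(1/\tilde{\epsilon}_n)]^{1/\beta}$ and $m = N_J K$ with $K \lesssim \sigma_n^{-d_{J^c}}(\log n)^{d_{J^c}(1+1/\tau)}$, and because Assumption \ref{as:Nj_o_n_1nu} forces $N_J$ to be polynomial in $n$, we obtain $\log(1/\lambda_n) = O(\log n)$. Combining this with $\sigma_n^{2\beta} = \tilde{\epsilon}_n^2/\bigl(\log(1/\tilde{\epsilon}_n)\bigr)^2 \asymp \tilde{\epsilon}_n^2/(\log n)^2$ gives
\[
\sigma_n^{2\beta}(\log(1/\lambda_n))^2 \lesssim \tilde{\epsilon}_n^2, \qquad \sigma_n^{2\beta}\log(1/\lambda_n) \lesssim \tilde{\epsilon}_n^2/\log n,
\]
so both right-hand sides above are bounded by a constant multiple of $\tilde{\epsilon}_n^2$, the residual $\sigma_n^{2\beta+\varepsilon}$ terms being of smaller order. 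This delivers the two claimed inequalities with some constant $A$.

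The argument is essentially bookkeeping once Lemma \ref{lm:dH_KL} is invoked, so there is no real obstacle; the only point requiring attention is to verify that $\log(1/\lambda_n)$ grows only logarithmically in $n$ (rather than picking up a $\sigma_n^{-1}$-type factor), which is precisely what Assumption \ref{as:Nj_o_n_1nu} and the polynomial bound on $K$ ensure. Because the first (linear) bound contains only $\log(1/\lambda_n)$ to the first power, the same calibration of $\sigma_n$ that handles the quadratic case trivially handles it, and we may take the same constant $A$ for both statements.
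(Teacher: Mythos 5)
Your proposal is correct and follows essentially the same route as the paper's own proof: apply Lemma \ref{lm:dH_KL} with $p=p_0$, $q=p(\cdot\,|\,\theta,m)$, $\lambda=\lambda_n$, plug in the Hellinger bound \eqref{eq:H2_bound_dgp_model} and the truncated-log-ratio bounds \eqref{eq:exp_log_ratio2}--\eqref{eq:exp_log_ratio}, and then verify $\log(1/\lambda_n)^2\sigma_n^{2\beta}\lesssim\tilde{\epsilon}_n^2$ via $\sigma_n^{2\beta}=\tilde{\epsilon}_n^2/(\log(1/\tilde{\epsilon}_n))^2$ together with the fact that $\log(1/\lambda_n)=O(\log n)$, which the paper also establishes by expanding $\lambda_n=C_3\sigma_n^{2\beta}/(2m^2)$ with $m=N_JK$ and invoking Assumption \ref{as:Nj_o_n_1nu}.
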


\begin{proof}
\begin{align*}
&E_0 \left( \left[  \log \frac{p_0(y_J,y_I,x)}{p(y_J,y_I,{x}| \theta, m)}\right]^2\right)\\ 
&\leq 
d_H^2(p_0(\cdot,\cdot), p(\cdot,\cdot |\theta,m ))
 \left( 12+ 2 \left(\log\frac{1}{\lambda_n}\right)^2\right) 
+ 8 P \left \{ \bigg( \log \frac{p_0(\cdot,\cdot)}{p(\cdot,\cdot| \theta, m)}\bigg)^2 
\indic \left \{ \frac{p(\cdot,\cdot| \theta, m)}{p_0(\cdot,\cdot)} < \lambda_n \right\} 
 \right\} \\ 
& \lesssim \sigma_n^{2\beta} (12 + 2 \log(1/\lambda_n)^2)+ \sigma_n^{2\beta + \epsilon} 
\lesssim  \log(1/\lambda_n)^2\sigma_n^{2\beta}, 
\end{align*}
where first inequality is derived using Lemma \ref{lm:dH_KL} and penultimate inequality is derived using inequalities (\ref{eq:H2_bound_dgp_model}) and (\ref{eq:exp_log_ratio}). Similarly,
\begin{align*}
&E_0 \left( \log \frac{p_0(y_J,y_I,x)}{p(y_J,y_I,{x}| \theta, m)}\right)\\ 
&\leq 
d_H^2(p_0(\cdot,\cdot), p(\cdot,\cdot |\theta,m ))
 \left( 1+ 2 \left(\log\frac{1}{\lambda_n}\right)\right) 
+ 2 P \left \{ \bigg( \log \frac{p_0(\cdot,\cdot)}{p(\cdot,\cdot| \theta, m)}\bigg)
\indic \left \{ \frac{p(\cdot,\cdot| \theta, m)}{p_0(\cdot,\cdot)} < \lambda_n \right\} 
 \right\} \\ 
& \lesssim \sigma_n^{2\beta} (1 + 2 \log(1/\lambda_n))+ \sigma_n^{2\beta + \epsilon} \lesssim \log(1/\lambda_n)\sigma_n^{2\beta}.
\end{align*}

Furthermore, 
\begin{align*}
\log(1/\lambda_n)\sigma_n^{2\beta} 
&\leq  \log(1/\lambda_n)^2\sigma_n^{2\beta} 
= 
\log \left(  \frac{2N_JK^2}{\sigma_n^{2\beta }}\right)^2 \tepsilon_n^{2} 
(\log(\tepsilon_n^\inv))^{-2}\\
&\leq   \left(\frac{\log [2N_J^2 (C_1 \sigma_n^{-d_{J^c}} \{\log ( \tilde{\epsilon}_n^\inv) \}^{d_{J^c}+d_{J^c}/\tau})^2 \sigma_n^{-2\beta }]}{\log(\tepsilon_n^\inv)}\right)^2\tepsilon_n^{2} ,
\end{align*}
where the term multiplying $\tepsilon_n^{2}$ on the right hand side is bounded by Assumption \ref{as:Nj_o_n_1nu} ($N_J= o(n^{1-\nu})$) and definitions of $\tepsilon_n$ and $\sigma_n$.
\end{proof}

\begin{lemma}
\label{lm:prior_bound}
Under the assumptions and notation of Section \ref{sec:post_rates},
for all sufficiently large $n$,  $s = 1 + 1/\beta + 1/\tau$, and some $C_6>0$
\begin{eqnarray*}
\Pi(m=N_JK, \theta \in S_{\theta^\star} )  \geq 
\exp \left[-C_6 N_J\tilde{\epsilon}_n^{-d_{J^c}/\beta} \{\log (n)\}^{d_{J^c}s + \max\{\tau_1,1,\tau_2/\tau\} }\right] .
\end{eqnarray*}
\end{lemma}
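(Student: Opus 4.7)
The plan is to exploit the independence structure of the prior specified in Section \ref{sec:model_prior} and write
\[
\Pi(m = N_J K,\, \theta \in S_{\theta^\star}) = \Pi(m = N_J K) \cdot \Pi_\sigma \cdot \Pi_\mu \cdot \Pi_\alpha,
\]
where $\Pi_\sigma$, $\Pi_\mu$, $\Pi_\alpha$ are the prior probabilities of the $\sigma$-, $\mu$-, and (conditional on $m$) $\alpha$-components of $S_{\theta^\star}$. I would then lower bound each factor separately and combine. A useful preliminary observation is that $K \lesssim \sigma_n^{-d_{J^c}} \{\log(1/\tilde\epsilon_n)\}^{d_{J^c} + d_{J^c}/\tau}$ together with $\sigma_n = [\tilde\epsilon_n/\log(1/\tilde\epsilon_n)]^{1/\beta}$ yields
\[
m = N_J K \lesssim N_J\, \tilde\epsilon_n^{-d_{J^c}/\beta} (\log n)^{d_{J^c} s},
\qquad s = 1 + 1/\beta + 1/\tau,
\]
and, using Assumption \ref{as:Nj_o_n_1nu}, $\log m \lesssim \log n$. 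This will let me convert each $\log$-bound into the form $-c\, m (\log n)^{\rho}$ for an appropriate exponent $\rho$.

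For the number of components, \eqref{eq:asnPrior_m} gives $\Pi(m=N_J K) \gtrsim \exp\{-a_{10}\, N_J K\, (\log N_J K)^{\tau_1}\} \gtrsim \exp\{-C\, m\, (\log n)^{\tau_1}\}$. For the variances, for $i \in J$ I apply \eqref{eq:asnPrior_sigma3} with $s = \sigma_i^{\star -2} \asymp N_i^2 \log(1/\sigma_n)$ and $t=1$, while for $i \in J^c$ I apply \eqref{eq:asnPrior_sigma3} with $s = \sigma_i^{\star -2} = \sigma_n^{-2\beta/\beta_i}$ and $t = \sigma_n^{2\beta}$; taking logs in both cases produces dominant terms of order $-N_i(\log n)^{1/2}$ or $-\sigma_n^{-\beta/\beta_i}$, both of which are absorbed by $-C\, N_J \tilde\epsilon_n^{-d_{J^c}/\beta}(\log n)^{\rho}$ for any $\rho \geq 0$.

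For the means, the components split as $\mu_{jk,J}$ and $\mu_{jk,J^c}$. For $i \in J$, the interval $[k_i - 1/(4N_i), k_i + 1/(4N_i)]$ has length $1/(2N_i)$, and on it the density lower bound \eqref{eq:asnPrior_mu_lb} is at least $a_{11}\exp(-a_{12})$ because $|k_i|\leq 1$, giving a per-coordinate lower bound $\gtrsim 1/N_i$ and, over all pairs $(j,k)$, a contribution $\gtrsim -m \log N_J \gtrsim -m \log n$. For $i \in J^c$, I use that each partition element $U_{j|k}$ from the Shen--Tokdar--Ghosal construction has volume at least $\prod_{i\in J^c}\sigma_n^{\beta/\beta_i}\,\tilde\epsilon_n^{2b_1}$ for $j\leq N$ and $\prod_{i\in J^c}\sigma_n^{\beta/\beta_i}$ for $j>N$, and that on the ball $\|\tilde x\|\leq 2 a_{\sigma_n} \lesssim (\log n)^{1/\tau}$ the density lower bound \eqref{eq:asnPrior_mu_lb} is at least $\exp\{-C(\log n)^{\tau_2/\tau}\}$. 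Multiplying density bound by volume and taking the product over all $N_J K$ pairs yields $\Pi_\mu \gtrsim \exp\{-C\, m\, [\log n + (\log n)^{\tau_2/\tau}]\}$. Finally, for the Dirichlet weights I would invoke a standard small-ball lower bound for $\mathrm{Dir}(a/m,\ldots,a/m)$ on the simplex region $\{\sum|\alpha_r - \alpha_r^\star|\leq 2\sigma_n^{2\beta},\, \min\alpha_{jk}\geq \sigma_n^{2\beta+d_{J^c}}/(2m^2)\}$ (analogous to Lemma 10 in \cite{ShenTokdarGhosal2013} and similar results used in \cite{GhosalVandervaart:07}), giving $\Pi_\alpha \gtrsim \exp\{-C\, m\, \log(m/\sigma_n)\} \gtrsim \exp\{-C\, m \log n\}$.

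Combining the four factors, the total lower bound has exponent $-C\, m\, (\log n)^{\max\{\tau_1,\,1,\,\tau_2/\tau\}}$, which by the preliminary bound on $m$ is at most $-C\, N_J\,\tilde\epsilon_n^{-d_{J^c}/\beta}(\log n)^{d_{J^c}s + \max\{\tau_1,1,\tau_2/\tau\}}$, matching the claim with an appropriate constant $C_6$. The main obstacle I expect is the Dirichlet step: isolating the correct small-ball probability so that it depends only polynomially on $m$ and $1/\sigma_n$ inside the logarithm, so that its contribution is of order $m \log n$ rather than anything larger. A secondary technical point is justifying the volume lower bound on the partition elements $U_{j|k}$; this requires reading off properties of the partition underlying the approximation result \eqref{eq:f0upsbeta} from \cite{ShenTokdarGhosal2013}, which is where the factor $\tilde\epsilon_n^{2b_1 d_{J^c}}$ (harmless because $b_1 \gtrsim 1$ and $\log\tilde\epsilon_n^{-1}\lesssim \log n$) enters. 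Everything else is bookkeeping with the explicit formula for $\sigma_n$ and the bound $K\lesssim \tilde\epsilon_n^{-d_{J^c}/\beta}(\log n)^{d_{J^c} s}$.
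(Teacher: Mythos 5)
Your plan reproduces the paper's proof almost verbatim: the same factorization by prior independence, the same bound $m = N_J K \lesssim N_J \tilde\epsilon_n^{-d_{J^c}/\beta}(\log n)^{d_{J^c}s}$ via the Shen--Tokdar--Ghosal count on $K$, the same use of \eqref{eq:asnPrior_m}, \eqref{eq:asnPrior_sigma3}, and \eqref{eq:asnPrior_mu_lb} for each factor, and the same Ghosal--van~der~Vaart Dirichlet small-ball estimate (the paper attributes this step to Lemma~10 of \cite{GhosalVandervaart:07}, not \cite{ShenTokdarGhosal2013}, and first isolates the one index $(j_0,k_0)$ with $\alpha_{j_0k_0}^\star \geq 1/m$ to fit the stated small-ball event inside $S_{\theta^\star}$). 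The only cosmetic issue is that \eqref{eq:asnPrior_sigma3} requires $t\in(0,1)$, so $t=1$ should be replaced by a fixed $t<1$; everything else is the same bookkeeping.
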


\begin{proof}
First, consider the prior probability of $m=N_JK$. By \eqref{eq:asnPrior_m} for some $C_{61}>0$,
\begin{align}
\label{eq:KL1}
&\Pi(m=N_JK) \propto \exp[-a_{10} N_JK (\log N_JK)^{\tau_1}] \geq \exp [-C_{61}N_J\tilde{\epsilon}_n^{-d_{J^c}/\beta} \{\log (1/ \tilde{\epsilon}_n)\}^{sd_{J^c}}(\log n)^{\tau_1}] \notag \\ 
& \geq \exp [-C_{61}N_J\tilde{\epsilon}_n^{-d_{J^c}/\beta} \{\log (n)\}^{sd_{J^c}+\tau_1}]
\end{align}
as
$N_J=o(n^{1-\nu})$ by \eqref{eq:Npoly_n} and $\tepsilon_n^\inv<n$.

Second, consider the prior on $\{ \alpha_{jk}\}$. 
There exist $(j_0,k_0)$ such that $\alpha_{j_0k_0}^\star \geq \frac{1}{m}$ 
and suppose that $|\alpha_{jk}^\star-\alpha_{jk}|\leq \frac{\sigma_n^{2\beta}}{m^2}$ for all $(j,k)\neq (j_0,k_0)$. Then, 
\begin{align*}
\left| \alpha_{j_0k_0}^\star-\alpha_{j_0k_0} \right| =
\left| \sum_{(jk)\neq(j_0k_0)} \alpha_{jk}^\star-\alpha_{jk} \right| \leq 
(m-1)\frac{\sigma_n^{2\beta}}{m^2}\leq\frac{ \sigma_n^{2\beta}}{m}
\end{align*}
\begin{align*}
\alpha_{j_0k_0} \geq \alpha_{j_0k_0}^\star - \frac{ \sigma_n^{2\beta}}{m}\geq 
\frac{ 1-\sigma_n^{2\beta}}{m} \geq \frac{\sigma_n^{2\beta + d_{J^c}}}{2m^2}.
\end{align*}
Furthermore,
\begin{align*}
\sum_{j=1}^{K}\sum_{k\in\mathcal{Y}_J} |\alpha_{jk}-\alpha_{jk}^\star| \leq (m-1)\frac{\sigma_n^{2\beta}}{m^2} +\frac{\sigma_n^{2\beta}}{m}\leq 2\sigma_n^{2\beta}.
\end{align*}
It then follows that
\begin{align*}
&\Pi\left(
\sum_{j=1}^{K}\sum_{k\in\mathcal{Y}_J} 
|\alpha_{jk}-\alpha_{jk}^\star| \leq  2 \sigma_n^{2\beta}, 
 \min_{j\leq K, k\in\mathcal{Y}_J} \alpha_{jk} \geq
\frac{\sigma_n^{2\beta + d_{J^c}}}{2m^2}
 \right)\\
 &\geq 
 \Pi\left(
  |\alpha_{jk}-\alpha_{jk}^\star| \leq \frac{\sigma_n^{2\beta}}{m^2}, \alpha_{jk} \geq  \frac{\sigma_n^{2\beta}}{2m^2}
  \text{ for } (j,k)\in\{1,\ldots,K\}\times\mathcal{Y}_J \setminus \{(j_0,k_0)\}
 \right)\\
 &\geq 
\exp\left\{-C_{62} N_J K \log(N_J K/\sigma_n^{\beta})\right\},
\end{align*}
where the last inequality is derived in the proof of Lemma 10 in \cite{GhosalVandervaart:07}
for some $C_{62}>0$ (see, also, Lemma 6.1 in \cite{GhosalGhoshVaart:2000}).
Note that 
\begin{align}
& K \log(N_J K/\sigma_n^{\beta})
\leq   \tepsilon_n^{-d_{J^c}/\beta} \log(\tepsilon_n^\inv)^{d_{J^c}s} \log(N_J \tepsilon_n^{-d_{J^c}/\beta-1}\log(\tepsilon_n^\inv)^{d_{J^c}s+1})
\notag
\\
&\lesssim 
 \tepsilon_n^{-d_{J^c}/\beta} \log(n)^{d_{J^c}s+1}.
\end{align}

Assumption (\ref{eq:asnPrior_sigma3}) on the prior for $\sigma_{i}$  implies that for $i \in J$
\begin{align}
\prod_{i=1}^{d_J}
 \Pi(\sigma_{i}^{-2}\geq 32 N_i^2 \beta \log \sigma_n^\inv)&\geq 
\prod_{i=1}^{d_J}
 \left(a_6 (64 N_i^2 \beta \log \sigma_n^\inv)^{a_7} \exp\left\{
 -a_9 (64 N_i^2 \beta \log \sigma_n^\inv)^{1/2}\right\}\right)\notag \\
 &\geq \exp\left\{ -C_{63} N_J \log(\sigma_n^\inv)
\right\} \geq \exp\left\{ -C_{64} N_J \log(n)
\right\},
\end{align}
and for $i \in J^c$,
\begin{align}
\prod_{i=1}^{d_{J^c}}
 &\Pi\left(\sigma_{i,n}^{-2}\leq\sigma_{i}^{-2}\leq \sigma_{i,n}^{-2}(1+\sigma_{n}^{2\beta})\right)\geq 
\prod_{i=1}^{d_{J^c}}
 \left(a_6 (\sigma_{i,n}^{-2})^{a_7}\sigma_{n}^{2a_8\beta} \exp\left\{
 -a_9 \sigma_{i,n}^\inv\right\}\right)
\notag\\
 &\geq \prod_{i=1}^{d_{J^c}} \exp\left\{
-C_{65}\sigma_{i,n}^\inv
\right\} = \prod_{i=1}^{d_{J^c}} \exp\left\{
-C_{65}\sigma_{n}^{-\beta/\beta_i}
\right\}\geq
 \exp\left\{
-C_{65}d_{J^c}\sigma_{n}^{-d_{J^c}}
\right\}\notag \\
&\geq \exp\left\{
-C_{66}\tepsilon_{n}^{-d_{J^c}/\beta}\log(n)^{d_{J^c}/\beta}
\right\}.
\end{align}
Assumption (\ref	{eq:asnPrior_mu_lb}{})  on the prior for $\mu_{jk}$ implies
\begin{align}
\prod_{j=1}^{K}\prod_{k\in\mathcal{Y}_J}\prod_{i \in J} \Pi\left( 
\mu_{jk,i} \in
 \left[ k_i - \frac{1}{4N_i}, k_i + \frac{1}{4N_i}\right]
\right)& \geq \left(a_{11} 2^{-d_J}N_J^\inv \exp\left\{-a_{12}\right\}\right)^{N_JK}\notag \\
&\geq 
\exp\left\{
-C_{67} N_JK \log(N_J)
\right\}
\notag \\
&\geq 
\exp\left\{
-C_{68} N_J  \tepsilon_n^{-d_{J^c}/\beta} \log(n)^{d_{J^c}s+1}
\right\} 
\end{align}
and 
\begin{align}
\label{eq:KL6}
\prod_{j=1}^{K}\prod_{k\in\mathcal{Y}_J}
\Pi\left( \mu_{jk,J^c}\in U_{j|k} \right)& \geq \left(a_{11}  \exp\left\{-a_{12}a_{\sigma_n}^{\tau_2}\right\} \min_{j,k}Vol(U_{j|k})\right)^{N_JK}\notag \\
&= \left(a_{11}  \exp\left\{-a_{12}a_{\sigma_n}^{\tau_2}\right\} \sigma_n^{d_{J^c}} 
\tepsilon_n^{2b_1d_{J^c}}\right)^{N_JK}\notag \\
& \geq \exp\left\{
-C_{69} N_J \tepsilon_n^{-d_{J^c}/\beta} \log(n)^{d_{J^c}s+\max\{1,\tau_2/\tau\}}
\right\}.
\end{align}

It follows from \eqref{eq:KL1} - \eqref{eq:KL6},  
that for all sufficiently large $n$  and some $C_6>0$,
\begin{eqnarray*}
\Pi( \mathcal{K}(p_0, \tilde{\epsilon}_n))  \geq \Pi(m=N_JK, \theta \in S_{\theta^\star} )  \geq 
\exp [-C_6 N_J\tilde{\epsilon}_n^{-d_{J^c}/\beta} \{\log (n)\}^{d_{J^c}s + \max\{\tau_1,1,\tau_2/\tau\} }] .
\end{eqnarray*}
\end{proof}

\subsubsection{Sieve Construction and Entropy Bounds} 
\label{sec:app_sieve_entropy}

\begin{lemma} 
\label{th:sieve}

For $H \in  \mathbb{N}$, $0<\underline{\sigma}<\overline{\sigma}$, and $\overline{\mu}>0$,
let us define a sieve
\begin{eqnarray}\label{eq:sieve}
\mathcal{F} = \{p(y,x|\theta,m): \; m\leq H, \; \mu_j \in [-\overline{\mu}, \overline{\mu}]^{d}, j=1, \ldots,m, \sigma_i \in [\underline{\sigma}, \overline{\sigma}], i=1,\ldots,d\}.
\end{eqnarray}

For $0<\epsilon < 1$ and $\underline{\sigma}\leq 1$, 
\begin{align*}
M_e(\epsilon, \mathcal{F}, d_{TV}) \leq & 
H \cdot \left \lceil \frac{12\overline{\mu}d}{\underline{\sigma}\epsilon} \right \rceil^{Hd}
\cdot \left [ \frac{ 15}{\epsilon} \right ]^{H}
\cdot \left \lceil \frac{ \log (\overline{\sigma}/\underline{\sigma})} {\log (1 + \epsilon/[12d])} \right \rceil^d.
\end{align*} 
For all sufficiently large $H$, 
large $\overline{\sigma}$ and small $\underline{\sigma}$, 
\begin{align*}
\Pi(\mathcal{F}^c)  \leq 
&
H^2 d \exp\{-a_{13} \overline{\mu} ^{\tau_3}\}  +   \exp\{- a_{10} H(\log H)^{\tau_1}  \} 
\\
&
+ d a_1\exp \{-a_2 \underline{\sigma}^{-2 a_3}\} 
+ 
d a_4 \exp\{-2a_5 \log \overline{\sigma}\}
.  
\end{align*}  
\end{lemma}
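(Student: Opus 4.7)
The lemma makes two independent claims: an entropy bound in total variation on the sieve $\mathcal{F}$, and a prior tail bound on $\mathcal{F}^c$. Both are proved by reducing to standard estimates for mixtures of multivariate normal densities and then invoking the prior assumptions of Section~\ref{sec:model_prior}.

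For the entropy bound, my first step is to pass from the mixed discrete-continuous distribution $p(\cdot,\cdot\mid\theta,m)$ to its underlying normal-mixture density $f(\cdot,\cdot\mid\theta,m)$ via inequality \eqref{eq:dTVp_dTVf} in Lemma~\ref{lm:p_f_distance_bds}: any $d_{TV}$-cover of the latent-variable mixtures is automatically a $d_{TV}$-cover of $\mathcal{F}$. With that reduction in place, I would carry out the standard three-fold discretisation of $(\sigma,\mu,\alpha)$ familiar from the Bayesian mixtures-of-normals literature. First, cover each $\sigma_i\in[\underline{\sigma},\overline{\sigma}]$ on a logarithmic grid of mesh $\log(1+\epsilon/(12d))$, using the elementary bound $d_{TV}(\phi_{\mu,\sigma},\phi_{\mu,\sigma'})\lesssim d\,|\sigma_i'/\sigma_i-1|$; this contributes at most $\lceil\log(\overline{\sigma}/\underline{\sigma})/\log(1+\epsilon/(12d))\rceil^{d}$ grid points. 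Second, with the $\sigma$-grid fixed, cover each component mean $\mu_j\in[-\overline{\mu},\overline{\mu}]^d$ on a Euclidean grid of mesh $\epsilon\underline{\sigma}/(12d)$, using $d_{TV}(\phi_{\mu,\sigma},\phi_{\mu',\sigma})\lesssim\|\mu-\mu'\|/\min_i\sigma_i$; this contributes at most $\lceil 12\overline{\mu}d/(\underline{\sigma}\epsilon)\rceil^{Hd}$ choices when $m\leq H$. Third, cover $\Delta^{m-1}$ in $\ell_1$ to precision $\epsilon/3$ by a set of size at most $(15/\epsilon)^m\leq(15/\epsilon)^H$, a classical simplex-covering bound. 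The elementary inequality $d_{TV}\bigl(\sum_j\alpha_j\phi_j,\sum_j\alpha_j'\phi_j'\bigr)\leq\sum_j|\alpha_j-\alpha_j'|+\sum_j\alpha_j'\,d_{TV}(\phi_j,\phi_j')$ guarantees that the three perturbations combine to at most $\epsilon$, and multiplying the three cover sizes and summing the resulting bound over $m=1,\ldots,H$ yields the stated entropy bound, with the leading factor of $H$ coming from the sum over $m$.

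For the prior tail bound, I would decompose $\mathcal{F}^c$ by the union bound
\[
\Pi(\mathcal{F}^c)\leq\Pi(m>H)+\Pi\bigl(\exists\,i,\,j\leq m:\mu_{j,i}\notin[-\overline{\mu},\overline{\mu}]\bigr)+\sum_{i=1}^{d}\Pi(\sigma_i<\underline{\sigma})+\sum_{i=1}^{d}\Pi(\sigma_i>\overline{\sigma}),
\]
and estimate each term directly from Section~\ref{sec:model_prior}. The first is at most $\sum_{i>H}\Pi(m=i)\lesssim\exp\{-a_{10}H(\log H)^{\tau_1}\}$ by \eqref{eq:asnPrior_m} and a geometric tail-sum. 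The two $\sigma$-terms follow immediately from \eqref{eq:asnPrior_sigma1} and \eqref{eq:asnPrior_sigma2} after a union bound over the $d$ coordinates, yielding the $d\,a_1\exp\{-a_2\underline{\sigma}^{-2a_3}\}$ and $d\,a_4\exp\{-2a_5\log\overline{\sigma}\}$ contributions. The $\mu$-term I handle by iterated expectation: conditionally on $m$, a union bound over the $md$ coordinates of $(\mu_{j,i})_{j\leq m,\,i\leq d}$ combined with \eqref{eq:asnPrior_mu_tail_ub} gives $md\exp\{-a_{13}\overline{\mu}^{\tau_3}\}$, and summing against $\Pi(m=i)$ while using $\sum_{i\leq H}i\,\Pi(m=i)\leq H$ produces the claimed $H^2d\exp\{-a_{13}\overline{\mu}^{\tau_3}\}$ prefactor (with plenty of slack).

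The main obstacle is bookkeeping rather than conceptual: the three mesh sizes in the entropy step must be calibrated to $(d,\underline{\sigma})$ so that the joint TV perturbation of $(\sigma,\mu,\alpha)$ stays below $\epsilon$; once that calibration is fixed, everything else is counting and tail estimation. Conceptually the argument is a direct transposition to the mixed discrete-continuous setting of the sieve construction used by Shen, Tokdar and Ghosal (2013), the only new ingredient being the initial reduction from $p$ to $f$ through Lemma~\ref{lm:p_f_distance_bds}.
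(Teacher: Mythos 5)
Your proposal is correct and follows essentially the same route as the paper's proof: pass from $p$ to $f$ via Lemma~\ref{lm:p_f_distance_bds}, build separate nets for $\sigma$ (geometric grid), $\mu$ (uniform grid of mesh $\underline{\sigma}\epsilon/(12d)$), and $\alpha$ (an $\epsilon/3$ net of the simplex of size $(15/\epsilon)^m$), combine via the standard triangle-inequality bound on $d_{TV}$ of normal mixtures, and sum over $m\leq H$; the prior-tail bound is the same union-bound decomposition the paper credits to Theorem~4.1 of Norets and Pati (2017). Your $\mu$-tail estimate even gives $Hd$ rather than the paper's cruder $H^2d$, which only strengthens the stated inequality.
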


\begin{proof}
The proof is similar to proofs of related results in 
\cite{norets_pati_2017}, \cite{ShenTokdarGhosal2013}, and \cite{GhosalVaart:01} among others. 

Let us begin with the first claim.  
For a fixed value of $m$, define set $S_{\mu}^m$ to
contain centers of $|S_{\mu}^m| =  \lceil 12\overline{\mu}d/(\underline{\sigma}\epsilon) \rceil$ equal length intervals 
partitioning $[-\overline{\mu}, \overline{\mu}]$. 
Let $S_\alpha^m$ be an $\epsilon/3$-net of $\Delta^{m-1}$ in total variation distance ($\forall \alpha \in \Delta^{m-1}$, $\exists \tilde{\alpha} \in S_\alpha^m$, $d_{TV}(\alpha,\tilde{\alpha})\leq \epsilon/3$).
From Lemma A.4 in \cite{GhosalVaart:01}, the cardinality of $S_\alpha^m$, is bounded as follows
\[
|S_\alpha^m| \leq [15/\epsilon]^{m}.
\]
Define 
$S_{\sigma} =  \{\sigma^{l},l=1, \ldots, \lceil{ \log (\overline{\sigma}/\underline{\sigma})/ (\log (1 + \epsilon/(12d)}\rceil, \sigma^1= \underline{\sigma}, (\sigma^{l+1} - \sigma^l)/\sigma^l = \epsilon/(12d)\}$.  

Let us show that 
\[
S_{\mathcal{F}} =  \{p(y,x|\theta,m): \; m\leq H, 
\, 
\alpha \in S_{\alpha}^m, \,
\sigma_i \in S_{\sigma}, \,
\mu_{ji} \in S_{\mu}^m,\, 
j\leq m, \, i \leq d \}\] 
is an $\epsilon$-net for $\mathcal{F}$ in $d_{TV}$.  For a given $p(\cdot|\theta,m)  \in \mathcal{F}$ with $\sigma^{l_i} \leq \sigma_i \leq \sigma^{l_i+1}$, $i=1,\ldots,d$,  
find   
$\tilde{\alpha} \in S_{\alpha}^m$,
$\tilde{\mu}_{ji} \in S_{\mu^x}^m$,
and $\tilde{\sigma}_i=\sigma_{l_i} \in S_{\sigma}$
such that for all
$j=1, \ldots, m$ and $i=1,\ldots, d$
\begin{equation*}
|\mu_{ji}  - \tilde{\mu}_{ji} | \leq \frac{\underline{\sigma} \epsilon}{12 d},  
\; 
\sum_j |\alpha_j - \tilde{\alpha}_j|  \leq \frac{\epsilon}{3},
\; 
\frac{|\sigma_i - \tilde{\sigma}_i|}{\tilde{\sigma}_i} \leq \frac{\epsilon}{12d}. 
\end{equation*}

By Lemma \ref{lm:p_f_distance_bds},
$d_{TV}(p(\cdot|\theta,m),p(\cdot|\tilde{\theta},m))\leq d_{TV}(f(\cdot|\theta,m),f(\cdot|\tilde{\theta},m))$.
Similarly to the proof of Proposition 3.1 in \cite{NoretsPelenis:11} or 
Theorem 4.1 in \cite{norets_pati_2017},
\begin{align*}
& d_{TV}(f(\cdot|\theta,m),f(\cdot|\tilde{\theta},m)) \leq \sum_j |\alpha_j - \tilde{\alpha}_j| +
2 \max_{j=1,\ldots,m}
|| \phi_{\mu_j,\sigma} -\phi_{\tilde{\mu}_j,\tilde{\sigma}}||_1   \\
& \leq \epsilon/3 + 
4 \sum_{i=1}^{d} \bigg\{\frac{|\mu_{ji} - \tilde{\mu}_{ji}|}{\sigma_i \wedge \tilde{\sigma}_i} 
+ \frac{|\sigma_i - \tilde{\sigma}_i|}{\sigma_i \wedge \tilde{\sigma}_i} \bigg\} \leq \epsilon
.
\end{align*}
This concludes the proof for the covering number.

The proof of the upper bound on $\Pi(\mathcal{F}^c)$ is the same as the corresponding proof of Theorem 4.1 in \cite{norets_pati_2017}, except here the coordinate specific scale parameters and slightly different notation for 
the prior tail condition \eqref{eq:asnPrior_mu_tail_ub} lead to dimension $d$ appearing in front of some of the terms in the bound.

\end{proof}

\begin{lemma} 
\label{lm:sieve_n}

Consider $\epsilon_n=(N_J/n)^{\beta_{J^c}/(2\beta_{J^c} + 1 )} (\log n)^{t_J}$ and 
$\tilde{\epsilon}_n=(N_J/n)^{\beta_{J^c}/(2\beta_{J^c} + 1 )} (\log n)^{\tilde{t}_J}$ 
 with
 $t_J > \tilde{t}_J +  \max \{0, (1- \tau_1)/2\}$ and
 $\tilde{t}_J > t_{J0}$, where $t_{J0}$ is defined in \eqref{eq:t0def}.
Define $\mathcal{F}_n$ as in \eqref{eq:sieve} with $\epsilon = \epsilon_n, H =
n\epsilon_n^2 /(\log  n)$, 
$\underline{\alpha}= e^{-n H}$,
$\underline{\sigma} = 	n^{-1/(2a_3)}$, $\overline{\sigma} = 	e^n$,
and $\overline{\mu}=n^{1/\tau_3}$. 
Then, for some constants 
$c_1,c_3 >0$ and every $c_2 >0$, 
$\mathcal{F}_n$ satisfies \eqref{eq:entropy} and \eqref{eq:sievecomplement} for all large $n$.  \end{lemma}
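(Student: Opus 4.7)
The plan is to substitute the specified values of $H$, $\underline{\sigma}$, $\overline{\sigma}$, and $\overline{\mu}$ into the general entropy and prior mass bounds from Lemma \ref{th:sieve} and verify that each resulting term has the required order. A key simplifying observation, used throughout, is that by Assumption \ref{as:Nj_o_n_1nu} together with the definitions of $\epsilon_n$ and $\tilde\epsilon_n$, both $\log(1/\epsilon_n)$ and $\log(1/\tilde\epsilon_n)$ are of order $\log n$, and likewise $\log H \asymp \log n$ since $H = n\epsilon_n^2/\log n$.

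For the entropy condition \eqref{eq:entropy}, I would take logs in the first bound of Lemma \ref{th:sieve}. The four factors contribute, respectively, $\log H = O(\log n)$; $Hd \log\lceil 12\overline\mu d/(\underline\sigma\epsilon_n)\rceil$, which is $O(Hd\log n)$ because $\overline\mu = n^{1/\tau_3}$, $\underline\sigma^{-1} = n^{1/(2a_3)}$, and $\epsilon_n^{-1}$ is polynomial in $n$; $H\log(15/\epsilon_n) = O(H\log n)$; and $d \log\lceil \log(\overline\sigma/\underline\sigma)/\log(1+\epsilon_n/(12d))\rceil = O(\log n)$ since the numerator is $O(n)$ and $\log(1+\epsilon_n/(12d)) \asymp \epsilon_n$. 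Adding these and using $H\log n = n\epsilon_n^2$ gives $\log M_e(\epsilon_n,\mathcal{F}_n,d_{TV}) \leq c_1 n\epsilon_n^2$ for some constant $c_1$, as required.

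For the prior mass condition \eqref{eq:sievecomplement}, I would verify that each of the four terms in the second bound of Lemma \ref{th:sieve} is bounded by a constant multiple of $\exp\{-(c_2+4)n\tilde\epsilon_n^2\}$ for all large $n$. The three ``tail'' terms yield bounds of the form $\exp\{-\Omega(n)\}$: indeed $H^2 d\exp\{-a_{13}\overline\mu^{\tau_3}\} = H^2 d\exp\{-a_{13}n\}$, $d a_1\exp\{-a_2\underline\sigma^{-2a_3}\} = d a_1 \exp\{-a_2 n\}$, and $d a_4\exp\{-2a_5\log\overline\sigma\} = d a_4\exp\{-2a_5 n\}$. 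Since $\tilde\epsilon_n \to 0$, we have $n\tilde\epsilon_n^2 = o(n)$, so all three of these are negligible compared with the required $\exp\{-(c_2+4)n\tilde\epsilon_n^2\}$ and hold for any $c_2$.

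The only delicate term, and the main obstacle, is $\exp\{-a_{10}H(\log H)^{\tau_1}\}$ arising from the prior \eqref{eq:asnPrior_m} on the number of components. Using $\log H \asymp \log n$ and $H = n\epsilon_n^2/\log n$,
\begin{align*}
a_{10} H(\log H)^{\tau_1} \;\asymp\; n\epsilon_n^2 (\log n)^{\tau_1 - 1}.
\end{align*}
To beat $(c_2+4)n\tilde\epsilon_n^2$, I need $(\epsilon_n/\tilde\epsilon_n)^2(\log n)^{\tau_1-1} = (\log n)^{2(t_J-\tilde t_J) + \tau_1 - 1}$ to diverge. Under the hypothesis $t_J > \tilde t_J + \max\{0,(1-\tau_1)/2\}$, the exponent $2(t_J-\tilde t_J) + \tau_1 - 1$ is strictly positive in both regimes: when $\tau_1 < 1$ we get $2(t_J-\tilde t_J) > 1-\tau_1$, and when $\tau_1 \geq 1$ we get $2(t_J-\tilde t_J) > 0$ while $\tau_1 - 1 \geq 0$. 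Hence for any $c_2 > 0$, the inequality holds for all sufficiently large $n$, completing the verification.
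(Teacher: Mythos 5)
Your proposal is correct and follows the same route as the paper's own proof: substitute the sieve parameters into the bounds of Lemma \ref{th:sieve}, note that the two dominant entropy factors each contribute $O(H\log n)=O(n\epsilon_n^2)$, observe that the $\overline{\mu}$, $\underline{\sigma}$, $\overline{\sigma}$ tail terms give $\exp\{-\Omega(n)\}$ and are negligible, and reduce the binding constraint to $n\epsilon_n^2(\log n)^{\tau_1-1} \gg n\tilde{\epsilon}_n^2$, which is exactly the condition $\epsilon_n^2(\log n)^{\tau_1-1}/\tilde{\epsilon}_n^2\to\infty$ the paper uses; your case split on $\tau_1\lessgtr 1$ correctly unpacks why $t_J > \tilde{t}_J + \max\{0,(1-\tau_1)/2\}$ ensures this. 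You simply carry out the substitutions more explicitly than the paper, which states the bounds and conclusion without intermediate steps.
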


\begin{proof}
From Lemma \ref{th:sieve}, 
\[
 \log M_e(\epsilon_n,\mathcal{F}_n,  \rho) 
\leq  c_1 H \log n =  c_1 n\epsilon_n^2.  
\]
Also,
\begin{align*}
 \Pi(\mathcal{F}_n^c) 
& \leq  H^2 \exp\{-a_{13} n\}  
    + \exp\{- a_{10} H(\log H)^{\tau_1} \} 
\\
&
+ a_1\exp \{-a_2 n\} 
+ 
a_4 \exp\{-2a_5 n\}
.  
\end{align*}
Hence, $\Pi(\mathcal{F}_n^c) \leq e^{-(c_2+4) n \tilde{\epsilon}_n^2}$ for any $c_2$ if 
$\epsilon_n^2 (\log n)^{\tau_1 -1}/\tilde{\epsilon}_n^2 \rightarrow \infty$, which holds for 
$t_J > \tilde{t}_J +  \max \{0, (1- \tau_1)/2\}$.

\end{proof}

\end{document}